\documentclass[a4paper,11pt,reqno]{amsart} 

\usepackage{amssymb, amsmath, amscd, amsthm}
\usepackage[color,all]{xy}
\usepackage{hyperref}
\usepackage{color}
\usepackage[T1]{fontenc}
\usepackage[utf8]{inputenc}
\usepackage{thmtools}
\usepackage{tikz-cd}
\usepackage{enumitem}

\newtheorem{theorem}{{\textbf Theorem}}[section]
\newtheorem{proposition}[theorem]{{\textbf Proposition}}
\newtheorem{corollary}[theorem]{{\textbf Corollary}}
\newtheorem{lemma}[theorem]{{\textbf Lemma}}
\newtheorem{exa}[theorem]{{\textbf Example}}
\newtheorem{remit}[theorem]{{\textbf Remark}}
\newtheorem{defn}[theorem]{{\textbf Definition}}

\newenvironment{remark}{\begin{remit}\rm}{\end{remit}}
\newenvironment{definition}{\begin{defn}\rm}{\end{defn}}

\numberwithin{equation}{section}
\title{The Segre Invariant and Brill-Noether theory on Hirzebruch Surfaces}
\author{Mauricio Rivera-Vega}
\address{Centro de Ciencias Matemáticas - UNAM, Morelia, Campus Morelia, Antigua Carretera a Pátzcuaro 8701, Col. Ex Hacienda San José de la Huerta, Morelia, Michoacán, México C. P. 58089.}
\email{frivera@matmor.unam.mx}
\begin{document}

\maketitle

\begin{abstract}
We study the Segre invariant on rank two $H$-stable vector bundles with fixed Chern classes on Hirzebruch surfaces $\Sigma_{n}$. For each vector bundle on these surfaces, we determine the possible values of the Segre invariant and compute the number of global sections of these bundles and present applications to Brill–Noether Theory.
\end{abstract}

\section{Introduction}
We work over the field $\mathbb{C}$ of complex numbers. Given coherent sheaves $F$ and $G$ on a variety $X$ we will write $h^{i}(F)$ (resp. $ext^{i}(G,F)$) to denote the dimension of the $i$th cohomology group $H^{i}(X,F)=H^{i}(F)$ (resp. $i$th $Ext$ group $Ext^{i}(G,F)$) as a $\mathbb{C}$-vector space. We will denote by $F^{*}$ the dual of $F$ and by $K_{X}$ the canonical divisor on $X$.

Given a non-singular projective surface $X$ and an ample divisor $H$ on $X$, the moduli space $M_{X,H}(n;c_{1},c_{2})$ of $H$-stable vector bundles of rank $n$ and Chern classes $c_{1}$ and $c_{2}$ was constructed by Maruyama \cite{Maruyama}. The Segre invariant for a rank two vector bundle $E$ on a surface is defined as follows
\begin{align*}
    S_{H}(E):=2\text{ min}\{\mu_{H}(E)-\mu_{H}(L)\},
\end{align*}
where $\mu_{H}(\_)$ is the $H$-slope (see Definition \ref{Definition 2.5mu}) and the minimum is taken over all line subbundles $L$ of $E$. This is a generalization of the Segre invariant for vector bundles on curves (see \cite{Roa}).

In \cite{Roa}, the purpose of the authors was to study vector bundles on $\mathbb{P}^{2}$, which constitutes a special geometric environment: its Picard group has rank one, and its homogeneous structure significantly restricts the variety of possible behaviors for stable bundles. From this perspective, the question remains open of how the Segre invariant behaves when the geometry of the base surface is substantially richer and admits a greater diversity of divisor classes and fiber structures. The aim of this work is to advance in that direction by studying the Segre invariant for the $H$-stable rank $2$ vector bundles on Hirzebruch surfaces.

As will be seen later, since the Segre invariant is invariant under tensor product with line bundles, the general discussion in the article can be reduced to rank two vector bundles whose first Chern class is normalized in $\text{Pic}(\Sigma_{n})$; that is $c_{1}\in\{0,\sigma,f,\sigma+f\}$.

Section $2$ introduces the Segre invariant on surfaces and collects some results that will be used. Section $3$ is the core of the paper. The main issue is to determine what numbers can appear as the Segre invariant an $H$-stable bundle on $\Sigma_{n}$ with fixed characteristic classes and how many global sections such a vector bundle has. The answer is given by the following result, where we say that a line subbundle is maximal in degree with respect to the ample divisor $H$, or equivalently as we will see later, it can be considered maximal with respect to the Segre invariant. 

\noindent\textbf{Theorem 3.4.}
Let $c_{1}\in\{0,\sigma,f,\sigma+f\}$ and $H$ be an ample divisor on $\Sigma_{n}$. Assume that one of the following conditions holds:
\begin{enumerate}
    \item $H\equiv \alpha\sigma+\beta f$, with $\alpha>1$, $\beta=\alpha n+1$ and $c_{2}>n>0$,

    \item $H\equiv \sigma+\beta f$ and $c_{2}\geq 2$,

    \item $H=H_{r}\equiv \alpha\sigma+\beta_{r}f$, with $\beta_{r}=\alpha n+r$, where $0<r\leq\alpha$ and $c_{2}>0$,

    \item $H\equiv \alpha\sigma+\beta f$, with $\alpha>0$, $\beta>\alpha n+(\alpha-1)$ and $c_{2}\geq 1$,

    \item $H=H_{r}\equiv \alpha\sigma+\beta_{r}f$, with $\beta_{r}=\alpha(n+1)+r$, where $0<r\leq\alpha$ and $c_{2}\geq 2$,

    \item $H=H_{r}\equiv \alpha\sigma+\beta_{r}f$, with $\beta_{r}=\alpha(n+1)+r$, where $0<r\leq\alpha$ and $c_{2}\geq 5$.
\end{enumerate}
Then there exists a vector bundle $E\in M_{\Sigma_{n},H}(2;c_{1},c_{2})$ with Segre invariant 
\[
S_H(E)=
\left\{
\begin{array}{c l}
2          & \text{if $c_{1}=0$ and $H$ is as in (1) or (2)},\\
r          & \text{if $c_{1}=\sigma$ and $H$ is as in (3) or $c_{1}=\sigma+f$ and $H$ is as in (5)},\\
\alpha     & \text{if $c_{1}=f$ and $H$ is as in (4)},\\
2\alpha+r  & \text{if $c_{1}=\sigma+f$ and $H$ is as in (6)}.
\end{array}
\right.
\]
Furthermore, $E$ fits in an exact sequence
\begin{align*}
    0\longrightarrow \mathcal{O}_{\Sigma_{n}}(D)\longrightarrow E\longrightarrow \mathcal{I}_{Z}(c_{1}-D)\longrightarrow 0,
\end{align*}
where
\[
D=
\begin{cases}
-\sigma, & \text{if $c_{1}=0$ and $H$ is as in (1)},\\
-f, & \text{if $c_{1}=0$ and $H$ is as in (2)},\\
0, & \text{if $c_{1}\in\{\sigma,f,\sigma+f\}$ and $H$ is as in (3,4,6)},\\
f, & \text{if $c_{1}=\sigma+f$ and $H$ is as in (5)}.
\end{cases}
\]
and $\mathcal{O}_{\Sigma_{n}}(D)\subset E$ is maximal and $Z$ is a general $0$-dimensional subscheme of length
\[
|Z|=
\begin{cases}
c_2-n, & \text{if } D=-\sigma, \\
c_2, & \text{if } D\in\{0,-f\},\\
c_2-1, & \text{if } D=f.
\end{cases}
\]

The proof strategy for the above theorem consists of applying Serre construction in order to obtain the desired extension on the Hirzebruch surface and then proving that the chosen line subbundle is maximal with respect to Mumford–Takemoto stability for a fixed polarization $H$.

In contrast to the case of $\mathbb{P}^2$ (see \cite{Roa}), the computation on $\Sigma_n$ reflects the structure of the Picard group and the dependence of the $H$-slope with respect to the chosen polarization.

It is worth emphasizing that, with the exception of case (1) and case (2) of Theorem \ref{Theorem 3.4}, all vector bundles under consideration admit at least one global section. This construction enables us to explicitly exhibit stable rank-two sheaves with global sections and, consequently, to demonstrate that some Brill-Noether loci for several polarizations and Chern classes are non-empty.

Although the main work focuses on the construction of vector bundles on Hirzebruch surfaces from line subbundles and $0$-dimensional subschemes that satisfy the Cayley–Bacharach property, there is previous work by Costa and Macías-Tarrío \cite{Costa3} that addresses similar constructions on ruled surfaces. Although both works employ the Cayley–Bacharach property and other related methods, the crucial difference lies in the techniques and purposes of constructing the vector bundles, their presentation as extensions, and the specific cases addressed. In particular, while the focus of our work is limited to Hirzebruch surfaces, the work of Costa and Macías-Tarrío, although more general, does not cover specific stability aspects. For example, their \cite[Theorem 4.8]{Costa3} and \cite[Proposition 4.12]{Costa3} only hold for a reduced family of ample divisors of the form $H\equiv \sigma+\beta f$. Another difference is that their extensions do not necessarily begin with a maximal line subbundle of the vector bundle, and do not provide structural
information regarding the varieties $W^{k}_{H}$ if $c_{1}\in \{\sigma, \sigma + f\}$. Therefore, our work provides a more precise characterization of geometric aspects of Hirzebruch surfaces, thus extending the analysis of the structure of vector bundles for this specific case of ruled surfaces.

The most important result in Brill-Noether theory that we obtained is Theorem \ref{Theorem 4.6}, which makes use of \cite[Corollary 2.6]{Filimon}, since for $E\in W^{k}_{H}\setminus W^{k+1}_{H}$, we obtain that $W^{k}$ is smooth and of the expected dimension $\rho^{k}_{H}$ at $E$ if the map $\mu_{E}$ is injective.

\noindent\textbf{Theorem 4.4.}
Let $E\in M_{\Sigma_{n},H}(2;c_{1},c_{2})$ with $c_{1}=c\sigma+df$ effective and $c_{2}>0$ such that $E$ lies in an extension of type
\begin{align*}
    0\longrightarrow \mathcal{O}_{\Sigma_{n}}(D)\longrightarrow E \longrightarrow \mathcal{I}_{Z}(c_{1}-D)\longrightarrow 0, \label{Example-muE}
\end{align*}
where $Z$ is a general $0$-dimensional subscheme of length $|Z|=c_{2}-D.(c_{1}-D)$ such that $|Z|=h^{0}(\mathcal{O}_{\Sigma_{n}}(c_{1}))$ where $D=a\sigma+bf$ is a base-point-free divisor on $\Sigma_{n}$ such that $h^{0}(\mathcal{O}_{\Sigma_{n}}(D))\geq 2$, $h^{1}(\mathcal{O}_{\Sigma_{n}}(2D))=0$, and $2(a+1)>c$ or $2(b+1)>d$. Then, if $h^{1}(\mathcal{O}_{\Sigma_{n}}(c_{1}))=0$ the map
\begin{align*}
    \mu_{E}:H^{0}(E)\otimes H^{1}(E^{*}\otimes\mathcal{O}_{\Sigma_{n}}(K_{\Sigma_{n}}))\longrightarrow H^{1}(E\otimes E^{*}\otimes\mathcal{O}_{\Sigma_{n}}(K_{\Sigma_{n}}))
\end{align*}
is injective.

The study of $W^{1}_{H}(2;c_{1},c_{2})\setminus W^{2}_{H}(2;c_{1},c_{2})$ in our cases by means of the map $\mu_{E}$ is done following the ideas of the \cite[Proposition 4.12]{Costa3} of Costa and Macías-Tarrío, where advantage is taken of the fact that the vector bundle has a unique section, and the injectivity of $\mu_{E}$ is shown to be equivalent to proving the injectivity of
\begin{align*}
    \tilde{\mu_{E}}:H^{1}(E^{*}\otimes\mathcal{O}_{X}(K_{X}))\longrightarrow H^{1}(E\otimes E^{*}\otimes\mathcal{O}_{X}(K_{X})),
\end{align*}
a technique that we generalize to our cases in Proposition \ref{Proposition 4.3.3}; however, Theorem \ref{Theorem 4.6} provides us with tools for the cases in which the vector bundle has at least two global sections. This is why this theorem is a direct tool for understanding Brill-Noether theory and extends previously known results for ruled surfaces.

\section{Preliminaries}
The goal of this section is to give the results concerning Hirzebruch surfaces and $H$-stability that we will use throughout this article.

\subsection{Hirzebruch surfaces}
\
\vspace{0.5em}

For any integer $n\geq 0$, let $\Sigma_{n}\cong\mathbb{P}(\mathcal{E})=\mathbb{P}(\mathcal{O}_{\mathbb{P}^{1}}\bigoplus\mathcal{O}_{\mathbb{P}^{1}}(-n))$ be a non-singular Hirzebruch surface . We denote by $\sigma$ and $f$ the standard basis of $\text{Pic}(\Sigma_{n})\cong\mathbb{Z}^{2}$ such that $\sigma^{2}=-n$, $\sigma.f=1$ and $f^{2}=0$. The canonical divisor is given by $K_{\Sigma_{n}}=-2\sigma-(n+2)f$.

\begin{remark}
It is well known that a divisor $D=a\sigma+bf$ on $\Sigma_{n}$ is effective if and only if $a,b\geq 0$ and $D$ is ample if and only if very ample, if and only if $a>0$ and $b>an$.
\end{remark}

Furthermore, the cohomology on Hirzebruch surfaces is completely determined. The following holds:

\begin{lemma}\cite[Lemma 2.2]{Costa2}\label{Lemma 1.23}
Let $\Sigma_{n}$ be a Hirzebruch surface. For any line bundle $\mathcal{O}_{\Sigma_{n}}(a\sigma+bf)$ on $\Sigma_{n}$ we have
\begin{align*}
H^{i}(\Sigma_{n},\mathcal{O}_{\Sigma_{n}}(a\sigma + bf)) = \left\{
\begin{array}{ll}
0 & \text{if }\hspace{1em} a = -1, \\ 
H^{i}(\mathbb{P}^{1}, S^{a}(\mathcal{F}) \otimes \mathcal{O}_{\mathbb{P}^{1}}(b)) & \text{if }\hspace{1em} a \geq 0, \\ 
H^{2-i}(\mathbb{P}^{1}, S^{-2-a}(\mathcal{F}) \otimes \mathcal{O}_{\mathbb{P}^{1}}(-n-b-2))^{*} & \text{if }\hspace{1em} a \leq -2
\end{array}
\right.
\end{align*}
where $S^{a}(\mathcal{F})$ denotes the $a$-th symmetric power of $\mathcal{F}=\mathcal{O}_{\mathbb{P}^{1}}\oplus\mathcal{O}_{\mathbb{P}^{1}}(-n)$.
\end{lemma}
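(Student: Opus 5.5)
Since Lemma \ref{Lemma 1.23} is cited from \cite{Costa2}, the plan is to reprove it by pushing forward along the ruling $\pi:\Sigma_{n}=\mathbb{P}(\mathcal{F})\to\mathbb{P}^{1}$ and then applying Serre duality. We normalize so that $\mathcal{O}_{\Sigma_{n}}(\sigma)=\mathcal{O}_{\mathbb{P}(\mathcal{F})}(1)$ and $\mathcal{O}_{\Sigma_{n}}(f)=\pi^{*}\mathcal{O}_{\mathbb{P}^{1}}(1)$. Under this normalization $\pi_{*}\mathcal{O}(\sigma)=\mathcal{F}=\mathcal{O}\oplus\mathcal{O}(-n)$ has a section vanishing on a curve of self-intersection $-n$, which is consistent with $\sigma^{2}=-n$. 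The projection formula then gives
\[
R^{j}\pi_{*}\mathcal{O}_{\Sigma_{n}}(a\sigma+bf)\cong R^{j}\pi_{*}\mathcal{O}_{\mathbb{P}(\mathcal{F})}(a)\otimes\mathcal{O}_{\mathbb{P}^{1}}(b),
\]
so everything reduces to the standard computation of the higher direct images of $\mathcal{O}_{\mathbb{P}(\mathcal{F})}(a)$ for a $\mathbb{P}^{1}$-bundle.

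That computation is done fiberwise. Every fiber is $\mathbb{P}^{1}$, so cohomology and base change give the following:
\begin{enumerate}
\item For $a\geq 0$, we have $\pi_{*}\mathcal{O}(a)=S^{a}(\mathcal{F})$ and $R^{1}\pi_{*}\mathcal{O}(a)=0$.
\item For $a=-1$, both $\pi_{*}$ and $R^{1}\pi_{*}$ vanish.
\item For $a\leq -2$, we have $\pi_{*}\mathcal{O}(a)=0$ and $R^{1}\pi_{*}\mathcal{O}(a)\neq 0$.
\end{enumerate}
The Leray spectral sequence degenerates because only one direct image is nonzero in each case. In case (1) this gives $H^{i}(\Sigma_{n},\mathcal{O}(a\sigma+bf))=H^{i}(\mathbb{P}^{1},S^{a}\mathcal{F}\otimes\mathcal{O}(b))$. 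In case (2) it gives that all cohomology vanishes.

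For $a\leq -2$ we will not identify $R^{1}\pi_{*}$ directly but use Serre duality on $\Sigma_{n}$ with $K_{\Sigma_{n}}=-2\sigma-(n+2)f$. This gives
\[
H^{i}(\mathcal{O}(a\sigma+bf))\cong H^{2-i}(\mathcal{O}((-2-a)\sigma+(-n-2-b)f))^{*}.
\]
Since $-2-a\geq 0$, case (1) applies to the right-hand side and produces exactly $H^{2-i}(\mathbb{P}^{1},S^{-2-a}(\mathcal{F})\otimes\mathcal{O}_{\mathbb{P}^{1}}(-n-b-2))^{*}$, as the lemma states.

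The only point requiring care is the identification of the tautological bundle with $\mathcal{O}(\sigma)$ under the chosen convention. Grothendieck's versus the classical $\mathbb{P}$, and $\mathcal{F}$ versus $\mathcal{F}^{*}$, change twists by $n$. I would check this by computing $h^{0}(\mathcal{O}(\sigma))=h^{0}(\mathcal{O}\oplus\mathcal{O}(-n))=1$ and $K_{\Sigma_{n}}$ via the relative Euler sequence $K_{\pi}=\mathcal{O}(-2)\otimes\pi^{*}\det\mathcal{F}$. This yields $K_{\pi}=-2\sigma-nf$ and hence $K_{\Sigma_{n}}=-2\sigma-(n+2)f$, matching the paper.
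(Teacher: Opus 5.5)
Your argument is correct. The projection formula and Leray spectral sequence along $\pi:\Sigma_n=\mathbb{P}(\mathcal{F})\to\mathbb{P}^1$, with $\mathcal{O}_{\Sigma_n}(\sigma)=\mathcal{O}_{\mathbb{P}(\mathcal{F})}(1)$, settle $a\geq -1$; your check $K_{\Sigma_n}=-2\sigma-(n+2)f$ confirms that normalization, and Serre duality reduces $a\leq -2$ to the case $-2-a\geq 0$, giving exactly the stated formula. The paper gives no proof of its own and only cites \cite[Lemma 2.2]{Costa2}; your argument is the standard derivation of that result, so there is no different route to compare.
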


\begin{proposition}\cite[Theorem 2.1(i)]{Coskun}\label{Proposition 2.3}
Let $L=\mathcal{O}_{\Sigma_{n}}(a\sigma+bf)$ be a line bundle on $\Sigma_{n}$. Then
\begin{align*}
    \chi(L)=(a+1)(b+1)-n{a(a+1)\over 2}
\end{align*}
\end{proposition}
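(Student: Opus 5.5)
We compute $\chi(L)$ from the Riemann--Roch theorem on the surface $\Sigma_{n}$:
\begin{align*}
\chi(L)=\chi(\mathcal{O}_{\Sigma_{n}})+\tfrac{1}{2}\,L.(L-K_{\Sigma_{n}}).
\end{align*}
Since $\Sigma_{n}$ is a rational surface, $\chi(\mathcal{O}_{\Sigma_{n}})=1$. We could check this directly with Lemma \ref{Lemma 1.23} at $a=b=0$: there $H^{i}(\mathcal{O}_{\Sigma_{n}})=H^{i}(\mathbb{P}^{1},\mathcal{O}_{\mathbb{P}^{1}})$, which is $\mathbb{C}$ for $i=0$ and vanishes otherwise. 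It then remains to expand the intersection product using $\sigma^{2}=-n$, $\sigma.f=1$, $f^{2}=0$ and $K_{\Sigma_{n}}=-2\sigma-(n+2)f$.

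Write $L=a\sigma+bf$, so that $L-K_{\Sigma_{n}}=(a+2)\sigma+(b+n+2)f$. Then
\begin{align*}
L.(L-K_{\Sigma_{n}}) &= -na(a+2)+a(b+n+2)+b(a+2)\\
&= -na^{2}-na+2ab+2a+2b.
\end{align*}
Halving and adding $1$ gives
\begin{align*}
\chi(L)=1+ab+a+b-\tfrac{n}{2}(a^{2}+a)=(a+1)(b+1)-n\,\frac{a(a+1)}{2},
\end{align*}
which is the claimed formula.

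As a sanity check, for $a\geq 0$ we can compare with Lemma \ref{Lemma 1.23}. There
\begin{align*}
S^{a}(\mathcal{F})\otimes\mathcal{O}_{\mathbb{P}^{1}}(b)=\bigoplus_{i=0}^{a}\mathcal{O}_{\mathbb{P}^{1}}(b-in),
\end{align*}
whose Euler characteristic is $\sum_{i=0}^{a}(b-in+1)=(a+1)(b+1)-n\,a(a+1)/2$. This agrees with the formula.

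There is no real obstacle. The only care needed is the sign bookkeeping in the intersection expansion, and recalling that $\chi(\mathcal{O}_{\Sigma_{n}})=1$.
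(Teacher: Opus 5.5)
Your Riemann--Roch computation is correct: the expansion $L.(L-K_{\Sigma_{n}})=-na^{2}-na+2ab+2a+2b$ together with $\chi(\mathcal{O}_{\Sigma_{n}})=1$ gives exactly the stated formula, and your cross-check against Lemma~\ref{Lemma 1.23} for $a\geq 0$ also holds. The paper gives no proof of its own (it only cites Coskun), and your argument is the standard one, equivalent to specializing the paper's Proposition~\ref{Proposition 2.4} to $r=1$, $c_{2}=0$, $p_{a}(\Sigma_{n})=0$.
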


\subsection{Sheaves, stability, and the Segre invariant}
\
\vspace{0.5em}

We begin this section by recalling some results on vector bundles on surfaces that we will use in the article. 

\begin{proposition}\label{Proposition 2.4}
Let $E$ be a torsion free sheaf of rank $r\geq 0$ on a non-singular projective surface $X$. Let $c_{1}$ and $c_{2}$ be the Chern classes of $E$. Then
\begin{align*}
    \chi(r;c_{1},c_{2}):=\chi(E)=r(1+p_{a}(X))-{c_{1}.K_{X}\over 2}+{c_{1}^{2}-2c_{2}\over 2},
\end{align*}
where $p_{a}(X)$ is the arithmetic genus.
\end{proposition}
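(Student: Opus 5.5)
This is the Hirzebruch–Riemann–Roch formula specialised to a surface. I will compute $\chi(E)=\int_X \mathrm{ch}(E)\,\mathrm{td}(X)$ and then rewrite the Todd class using Noether's formula.

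\emph{Reduction to vector bundles.} For a torsion free sheaf on a non-singular projective surface, $E$ admits a finite locally free resolution. Both $\chi$ and the Chern character are additive on short exact sequences, so it suffices to prove the identity for vector bundles. Alternatively, one can use the sequence $0\to E\to E^{**}\to Q\to 0$ with $Q$ of finite length. Since $\chi(Q)=\mathrm{length}(Q)$ and $c_2(E)=c_2(E^{**})+\mathrm{length}(Q)$, the formula for $E$ follows from the formula for the locally free sheaf $E^{**}$.

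\emph{Computation for a vector bundle.} The Chern character is
\[
\mathrm{ch}(E)=r+c_1+\tfrac12(c_1^2-2c_2),
\]
and the Todd class of the surface is
\[
\mathrm{td}(X)=1-\tfrac12K_X+\tfrac1{12}(K_X^2+c_2(X)).
\]
Taking the degree-two part of the product gives
\[
\chi(E)=\frac{r}{12}\bigl(K_X^2+c_2(X)\bigr)-\frac{c_1.K_X}{2}+\frac{c_1^2-2c_2}{2}.
\]
Noether's formula gives $\chi(\mathcal{O}_X)=\tfrac1{12}(K_X^2+c_2(X))$. Finally, the arithmetic genus of a surface is defined by $p_a(X)=\chi(\mathcal{O}_X)-1$, so the first term equals $r(1+p_a(X))$, which is exactly the stated formula.

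\emph{Sanity check.} For $r=1$, $c_2=0$ the formula becomes Riemann–Roch for line bundles, $\chi(L)=\chi(\mathcal{O}_X)+\tfrac12 L.(L-K_X)$. On $\Sigma_n$ this agrees with Proposition \ref{Proposition 2.3}, since $p_a=0$ there.

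The only delicate step is the convention for $p_a$: one must ensure $1+p_a(X)=\chi(\mathcal{O}_X)$. This holds with the standard definition for surfaces, and I would state it explicitly.
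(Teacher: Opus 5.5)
Your proposal is correct. The paper states this as a standard preliminary without proof, and your argument is the usual one behind it: Hirzebruch--Riemann--Roch, then Noether's formula, then $1+p_a(X)=\chi(\mathcal{O}_X)$, with the reduction to the locally free sheaf $E^{**}$ or to a locally free resolution handling the torsion-free case.
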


\begin{definition}\label{Definition 2.5mu}
Let $X$ be a smooth, irreducible complex projective surface and let $H$ be an ample divisor on $X$. Let $\mathcal{E}$ be a torsion free coherent sheaf on $X$ with fixed Chern classes $c_{i}(\mathcal{E})\in H^{2i}(S,\mathbb{Z})$ for $i\in\{1,2\}$. The $H$-slope of $\mathcal{E}$ is defined as the rational number
\begin{align*}
    \mu_{H}(\mathcal{E}):={c_{1}(\mathcal{E}).H\over\hbox{rk}(\mathcal{E})},
\end{align*}
where $c_{1}(\mathcal{E}).H$ is the degree of $\mathcal{E}$ with respect to $H$ and is denoted by $\hbox{deg}_{H}(\mathcal{E})$. A torsion free coherent sheaf $\mathcal{E}$ is $H$-stable (respectively $H$-semistable) if for every nonzero subsheaf $\mathcal{F}$ of smaller rank, we have $\mu_{H}(\mathcal{F})<\mu_{H}(\mathcal{E})$ (respectively $\leq$).
\end{definition}

\begin{remark}\label{Lemma 4.2}
Let $\mathcal{E}$ be a torsion-free sheaf on a smooth projective variety $X$. Then $\mathcal{E}$ is stable if and only if for all line bundles $\mathcal{F}$ we have $\mathcal{E}\otimes\mathcal{F}$ is stable if and only if $\mathcal{E}^{*}$ is stable. For more details, see \cite[Lemma 4.5]{Friedman}.
\end{remark}

Let $\mathcal{O}_{X}(L)\hookrightarrow E$ be a line subbundle. By \cite[Proposition 2.5]{Friedman}, there exists a unique effective divisor $D \geq 0$ such that inclusion $\mathcal{O}_{X}(L)\hookrightarrow E$ factors through inclusion $\mathcal{O}_{X}(L)\hookrightarrow\mathcal{O}_{X}(L)\otimes\mathcal{O}_{X}(D)$, where the quotient $E/\mathcal{O}_{X}(L)\otimes\mathcal{O}_{X}(D)$ is torsion-free. Under this condition, if $D=0$, then there exists a local complete intersection codimension two subscheme $Z$ of $X$ and an exact sequence  
\[
0 \longrightarrow \mathcal{O}_{X}(L) \longrightarrow E \longrightarrow \mathcal{I}_Z(L') \longrightarrow 0,
\]
where $\mathcal{I}_{Z}(L')$ is torsion free and $\mathcal{I}_{Z}$ denotes the ideal sheaf of a subscheme $Z$ of codimension two. Note that $c_{1}(E)=c_{1}(L)+c_{1}(L')$ and $c_{2}(E)=c_{1}(L).c_{1}(L')+|Z|$, where $|Z|$ denotes the length of $Z$.

\begin{definition}\label{Definition 2.5}
Let $H$ be an ample divisor on $X$. For a vector bundle $E$ of rank $2$ on $X$ we define the \textit{Segre invariant} $S_{H}(E)$ by
\begin{align*}
    S_{H}(E):=2\hbox{ min}\{\mu_{H}(E)-\mu_{H}(L)\},
\end{align*}
where the minimum is taken over all line subbundles $L$ of $E$.
\end{definition}

Note that Definition \ref{Definition 2.5} is equivalent to $S_{H}(E)=\hbox{min }\{c_{1}(E).H-2L.H\}$. Furthermore, we have that as the minimum is taken over the line subbundles $L$ of $E$, then $S_{H}(E)=\hbox{deg}_{H}(E)-2\hbox{ max}\{\hbox{deg}_{H}(L)|L\subset E\}$.

For the Segre invariant to always be a finite number, the differences in slopes must reach their minimum; that is, the $\text{deg}_{H}(L)$ of the line subbundles of the vector bundle must be bounded above with respect to the ample divisor:

\begin{lemma}\cite[Lemma 2.5]{Roa}
Let $X$ be a projective smooth surface, $H$ an ample line bundle on $X$ and $E$ a rank $2$ vector bundle on $X$. Then the set $\{L.H\hspace{0.5em}|\hspace{0.5em}L\subset E\text{, }L\text{ a line bundle}\}$ is bounded from above.
\end{lemma}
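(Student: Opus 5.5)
The plan is to exploit two facts: a line subbundle of $E$ is determined by a nonzero section of a twist of $E$, and $E$ admits a filtration by line bundles and ideal sheaves. We fix an ample $H$ once and for all and write $h=\mathcal{O}_X(H)$. Suppose $\mathcal{O}_X(L)\hookrightarrow E$ is a line subbundle. Then $H^{0}(E\otimes\mathcal{O}_X(-L))\neq 0$, so it suffices to show the following: there is an integer $N$ such that $H^{0}(E\otimes\mathcal{O}_X(-L))=0$ whenever $L.H>N$.

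The first step is to reduce to the case of line bundles. Choose $m\gg0$ so that $E(mH)$ is globally generated. A general section of $E(mH)$ then vanishes in codimension two, since $E$ has rank two on a surface. This gives an exact sequence
\[
0\longrightarrow \mathcal{O}_X(-mH)\longrightarrow E\longrightarrow \mathcal{I}_W(c_1(E)+mH)\longrightarrow 0
\]
with $W$ zero-dimensional, as in the factorization discussion preceding Definition \ref{Definition 2.5}. We twist this sequence by $-L$ and take global sections. We get
\[
h^{0}(E(-L))\leq h^{0}(\mathcal{O}_X(-mH-L))+h^{0}(\mathcal{I}_W(c_1(E)+mH-L))\leq h^{0}(\mathcal{O}_X(-mH-L))+h^{0}(\mathcal{O}_X(c_1(E)+mH-L)).
\]

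The second step is the elementary fact that a line bundle $M$ with $H^{0}(M)\neq0$ satisfies $M.H\geq 0$. Indeed, $M$ is then effective, and an effective divisor meets the ample class $H$ nonnegatively. Apply this to the first term: it vanishes once $(-mH-L).H<0$, that is, once $L.H>-mH^{2}$. Apply it to the second term: it vanishes once $L.H>(c_1(E)+mH).H$. Hence we may take
\[
N=\max\{-mH^{2},\ c_1(E).H+mH^{2}\}=c_1(E).H+mH^{2},
\]
and the set $\{L.H\}$ is bounded above by $N$.

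No step here is a real obstacle. The one point needing care is the existence of a section of $E(mH)$ whose zero locus has codimension two. This follows from global generation and a standard Bertini or dimension count for rank two bundles on surfaces. If we want to avoid that argument, it is enough to take any injection $\mathcal{O}_X(-mH)\hookrightarrow E$ with torsion-free quotient, obtained by saturating and using \cite[Proposition 2.5]{Friedman}. The quotient is then a rank one torsion-free sheaf $\mathcal{I}_W(M)$, and the same effectivity bound applies to $M-L$.
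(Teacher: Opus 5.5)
Your proof is correct. The paper gives no proof of this lemma. It quotes it from \cite{Roa} only to ensure that $S_H(E)$ is a well-defined minimum, so there is no in-text argument to compare with, and yours works as a complete self-contained justification.

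The steps all hold. The inclusion $\mathcal{O}_X(L)\hookrightarrow E$ is a nonzero element of $H^0(E(-L))$. The Koszul sequence of a general section of $E(mH)$, twisted by $-mH$, gives $0\to\mathcal{O}_X(-mH)\to E\to\mathcal{I}_W(c_1(E)+mH)\to 0$. Effectivity against the ample class $H$ then kills both terms once $L.H$ exceeds your $N$.

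One small gap: writing $N=c_1(E).H+mH^2$ needs $(c_1(E)+2mH).H\geq 0$, which you did not justify. It holds because $\det E(mH)$ is a quotient of $\wedge^2$ of a trivial bundle, hence globally generated and so effective. Since only finiteness of the maximum matters, this is cosmetic.

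If you want to avoid both the Bertini step and the saturation fallback, choose $m$ so that $E^{*}(mH)$, rather than $E(mH)$, is globally generated. Dualizing a surjection $\mathcal{O}_X^{\oplus k}\to E^{*}(mH)$ gives an embedding $E\hookrightarrow\mathcal{O}_X(mH)^{\oplus k}$. Any $\mathcal{O}_X(L)\hookrightarrow E$ then has a nonzero component $\mathcal{O}_X(L)\to\mathcal{O}_X(mH)$, so $mH-L$ is effective and $L.H\leq mH^{2}$. Your filtration argument expresses the bound through the twist that makes $E$ globally generated and needs the two-term estimate; the dual version gives a single bound in one line.
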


The term invariant is used because $S_{H}(E)=S_{H}(E\otimes L)$ for any line bundle $L\in\text{Pic}(X)$. Furthermore, a vector bundle $E$ is $H$-stable (resp. $H$-semistable) if and only if $S_{H}(E)>0$ (resp. $\geq 0$), see \cite{Roa} for details.

\begin{definition}
A line subbundle $\mathcal{O}_{X}(L)\subset E$ is maximal if 
\begin{align*}
    S_{H}(E) = \hbox{deg}_{H}(E)-2\hbox{deg}_{H}(\mathcal{O}_{X}(L)).
\end{align*}
\end{definition}

\subsection{Brill-Noether Theory}
\
\vspace{0.5em}

Our aim is to study the moduli space of stable vector bundles Hirzebruch surfaces using the Brill-Noether locus. For this reason, we conclude this section by recalling some basic facts and presenting results from Costa and Macías-Tarrío \cite{Costa3}, in order to make a subsequent comparison between our results.

\begin{proposition}\label{Proposition 4.4}
Let $X$ be a smooth, projective rational surface with effective anticanonical
line bundle and let $H$ be an ample line bundle on $X$. Then, the moduli space $M_{X,H}(2;c_{1},c_{2})$ of rank two, $H$-stable vector bundles $E$ on $X$ with fixed Chern classes $c_{i}(E)=c_{i}$ is either empty or a smooth irreducible variety of dimension $$\text{dim}(M_{X,H}(2;c_{1},c_{2}))=4c_{2}-c_{1}^{2}-3.$$
\end{proposition}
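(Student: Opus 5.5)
The statement is a standard fact about moduli of sheaves on rational surfaces, so the plan is the usual deformation-theoretic argument. Fix $E\in M_{X,H}(2;c_1,c_2)$. Since $X$ is rational, $p_g=q=0$, and the moduli space is fine enough locally that its Zariski tangent space at $E$ is $Ext^1(E,E)$. Obstructions lie in the traceless part $Ext^2(E,E)_0$.

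\textbf{Vanishing of $Ext^2$.} By Serre duality,
\[
ext^2(E,E)=h^0(E\otimes E^*\otimes\mathcal{O}_X(K_X)).
\]
Because $-K_X$ is effective, there is an injection $\mathcal{O}_X(K_X)\hookrightarrow\mathcal{O}_X$. This induces an injection
\[
H^0(E\otimes E^*\otimes\mathcal{O}_X(K_X))\hookrightarrow Hom(E,E).
\]
Its image consists of endomorphisms vanishing along a section $s$ of $-K_X$. Since $E$ is $H$-stable it is simple, so $Hom(E,E)=\mathbb{C}$ (scalars). A nonzero scalar does not vanish on the nonempty curve $\{s=0\}$ when $K_X\neq 0$, which holds because $X$ is rational. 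Hence $ext^2(E,E)=0$, there are no obstructions, and $M$ is smooth at $E$.

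\textbf{Dimension count.} Here $ext^0=1$ and $ext^2=0$. Riemann--Roch for $E\otimes E^*$, using $\chi(\mathcal{O}_X)=1$ and Proposition \ref{Proposition 2.4} with rank $4$, $c_1=0$ and $c_2(E\otimes E^*)=4c_2-c_1^2$, gives
\[
\chi(E,E)=4-(4c_2-c_1^2).
\]
Therefore
\[
ext^1(E,E)=1-\chi(E,E)=4c_2-c_1^2-3,
\]
so every component of $M$ has dimension $4c_2-c_1^2-3$.

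\textbf{Irreducibility.} This is the main obstacle, and the local computation does not give it. I would not reprove it but cite the known result. The standard argument (due to Walter, \emph{Irreducibility of moduli spaces of vector bundles on birationally ruled surfaces}) applies to surfaces with $-K_X$ effective, in particular rational surfaces. It degenerates bundles to those that restrict generically to $\mathcal{O}\oplus\mathcal{O}$ or $\mathcal{O}\oplus\mathcal{O}(-1)$ on fibers of a ruling. It then shows that the non-generic strata have dimension strictly smaller than $4c_2-c_1^2-3$, so they cannot form components, and that the generic stratum is irreducible via elementary transformations.

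Combining smoothness, the dimension count, and the cited irreducibility result gives the proposition.
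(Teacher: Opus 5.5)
Your argument is correct and is the standard justification for this statement, which the paper records without proof as a known result: smoothness from $Ext^2(E,E)\cong Hom(E,E(K_X))^*=0$ (via a nonzero section of $-K_X$ and the simplicity of $E$), the dimension $4c_2-c_1^2-3$ from Riemann--Roch, and irreducibility taken from Walter's theorem. One correction to how you state Walter's hypothesis: his result is for birationally ruled surfaces with $K_X\cdot H<0$, not for rational surfaces in general; that condition holds here because $-K_X$ is a nonzero effective divisor and $H$ is ample.
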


\begin{corollary}\cite[Corollary 2.8]{Costa4}\label{Corollary 4.5}
Let $X$ be a smooth projective surface and let $M_{H}=M_{X,H}(r;c_{1},c_{2})$ be a
moduli space of rank $r$, $H$-stable vector bundles $E$ on $X$ with fixed Chern classes $c_{i}(E)=c_{i}$. Assume that $c_{1}.H\geq rK_{X}.H$. Then, for any $k\geq 0$, there exists a determinantal variety $W^{k}_{H}(r;c_{1},c_{2})$ such that
\begin{align*}
    \text{Supp}(W^{k}_{H}(r;c_{1},c_{2}))=\{E\in M_{H}|h^{0}(E)\geq k\}.
\end{align*}
Moreover, each non-empty irreducible component of $W^{k}_{H}(r;c_{1},c_{2})$ has dimension greater than or equal to 
\begin{align*}
    \rho^{k}_{H}(r;c_{1},c_{2})=\text{dim}(M_{H})-k(k- r(1+P_{a}(X))+{c_{1}.K_{X}\over2}-{c_{1}^{2}\over2}+c_{2})
\end{align*}
and $W^{k+1}_{H}(r;c_{1},c_{2})\subset\text{Sing}(W^{k}_{H}(r;c_{1},c_{2}))$ whenever $W^{k}_{H}(r;c_{1},c_{2})\neq M_{X,H}(r;c_{1},c_{2})$.
\end{corollary}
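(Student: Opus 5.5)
The plan is to realize the Brill--Noether locus as the degeneracy locus of a morphism between two vector bundles on $M_H$ (or on an étale/local cover carrying a universal family). The expected codimension is then the standard determinantal one, and the lower bound on the dimension of components follows from the classical theory of determinantal varieties. The hypothesis $c_1.H\geq rK_X.H$ is used to kill $H^2$, so that $h^0(E)-h^1(E)$ is constant on $M_H$.

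\textbf{Step 1 (vanishing of $h^2$).} For $E\in M_H$, Serre duality gives $h^2(E)=h^0(E^*\otimes\mathcal{O}_X(K_X))$. By Remark \ref{Lemma 4.2}, $E^*(K_X)$ is $H$-stable, and
\begin{align*}
\mu_H(E^*(K_X))=\frac{rK_X.H-c_1.H}{r}\leq 0 .
\end{align*}
A nonzero section would give a subsheaf $\mathcal{O}_X\hookrightarrow E^*(K_X)$ of slope $0\geq\mu_H(E^*(K_X))$, contradicting stability. Hence $h^2(E)=0$, and $h^0(E)-h^1(E)=\chi(r;c_1,c_2)$ by Proposition \ref{Proposition 2.4}.

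\textbf{Step 2 (presenting $H^0$ as a kernel).} Work locally on $M_H$, or on the Quot-scheme/GIT presentation, with a (quasi-)universal family $\mathcal{E}$, and descend at the end. By boundedness of $M_H$, choose an effective ample divisor $Y$ with $Y.H\gg0$ such that $h^1(E(Y))=h^2(E(Y))=0$ for all $E\in M_H$. The sequence
\begin{align*}
0\to E\to E(Y)\to E(Y)|_Y\to0
\end{align*}
has $H^1(E(Y)|_Y)$ vanishing as well, since $H^1(E(Y))=H^2(E)=0$. Hence by cohomology and base change $\mathcal{K}^0=\pi_*\mathcal{E}(Y)$ and $\mathcal{K}^1=\pi_*\mathcal{E}(Y)|_Y$ are vector bundles of ranks $m$ and $m-\chi$. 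For each $E$, the space $H^0(E)$ is the kernel of the fibre of $\varphi:\mathcal{K}^0\to\mathcal{K}^1$. We then define $W^k_H$ as the degeneracy locus $\{\operatorname{rk}\varphi\leq m-k\}$, cut out by minors, so its support is $\{E : h^0(E)\geq k\}$.

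\textbf{Step 3 (dimension bound and singular locus).} The locus of maps $\mathbb{C}^m\to\mathbb{C}^{m-\chi}$ of rank $\leq m-k$ has codimension $k\bigl(k-\chi\bigr)$. Substituting
\begin{align*}
\chi=r(1+p_a(X))-\tfrac{c_1.K_X}{2}+\tfrac{c_1^2}{2}-c_2
\end{align*}
gives exactly the correction term in $\rho^k_H$. Eagon--Northcott/Krull then yields that every nonempty component has dimension $\geq \dim M_H-k(k-\chi)=\rho^k_H$.

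For the inclusion $W^{k+1}_H\subset\operatorname{Sing}W^k_H$, use the tangent space description. At a point where $\operatorname{rk}\varphi\leq m-k-1$, all first-order variations of the $(m-k+1)$-minors vanish, since each minor's differential is a combination of $(m-k)$-minors, which vanish there. Hence the Zariski tangent space of $W^k_H$ equals $T M_H$. When $W^k_H\neq M_H$, this exceeds the dimension of the proper subvariety $W^k_H$ near that point, so the point is singular.

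\textbf{Main obstacle.} I expect the main obstacle to be the technical point of Step 2: lacking a global universal family, one must make the construction on the parameter scheme of the GIT quotient equivariantly and descend the determinantal ideal. I would handle this by noting that the ideals of minors are independent of the choice of local family and of $Y$, because changing either changes $\varphi$ by isomorphisms and by adding a split isomorphism summand. Hence the local definitions glue.
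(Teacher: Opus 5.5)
Your proof is correct and follows essentially the standard argument for this result. The paper gives no proof of its own and simply quotes \cite[Corollary 2.8]{Costa4}, which is obtained as you do: kill $h^2(E)=h^0(E^*(K_X))$ by Serre duality and the $H$-stability of $E^*(K_X)$; realize $\{h^0\geq k\}$ as the degeneracy locus of $\mathcal{K}^0\to\mathcal{K}^1$, obtained by twisting an (étale-local) universal family by a sufficiently positive divisor and glued via Fitting ideals of $R^1\pi_*\mathcal{E}$; then use Eagon--Northcott for the dimension bound, and the vanishing of the differentials of the minors for $W^{k+1}_H\subset\mathrm{Sing}(W^k_H)$.

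Three tacit points, all inherited from the statement as quoted:
\begin{enumerate}
\item Step 1 needs $r\geq 2$ when $c_1.H=rK_X.H$.
\item The bound in Step 3 is only valid for $k\geq\chi$. For $0<k<\chi$ one has $W^k_H=M_H$ while $\rho^k_H>\dim M_H$.
\item The singular-locus step uses that $M_H$ is irreducible, so that $W^k_H\neq M_H$ forces $\dim_E W^k_H<\dim T_EM_H$.
\end{enumerate}
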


In \cite{Costa3}, one of the main problems that Costa and Macías-Tarrío focus on is determining when the Brill-Noether locus $W^{k}_{H}$ is non-empty. One of their main theorems is the following:

\begin{theorem}\cite[Theorem 4.1]{Costa3}\label{Theorem 1.2 Costa}
Let $X$ be a ruled surface over a nonsingular curve $C$ of genus $g\geq 0$, $m\in\{0,1\}$, $c_{2}>>0$ an integer and $H\equiv \alpha C_{0}+\beta f$ an ample divisor on $X$ with $\alpha(n+m)<\beta$. Then, for any $k$ in the range 
\begin{align*}
    \text{max}\{1,g\}\leq k<{1\over2\alpha}[\beta-\alpha(e-m+2g-2)],
\end{align*}
the Brill-Noether locus $W^{k}_{H}(2;C_{0}+\mathfrak{m}f,c_{2})\neq\varnothing$ with $m=\text{deg}(\mathfrak{m})$ whenever $M_{H}(2;C_{0}+\mathfrak{m}f,c_{2})\neq\varnothing$.
\end{theorem}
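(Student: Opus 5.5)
The plan is to realize a point of $W^{k}_{H}(2;C_{0}+\mathfrak{m}f,c_{2})$ by the Serre construction, taking as subsheaf a line bundle pulled back from the base curve that already has at least $k$ sections. Let $\pi:X\to C$ be the ruling and $e=-C_{0}^{2}$; in the hypothesis $\alpha(n+m)<\beta$ I read $n$ as $e$.

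\textbf{Choice of the subsheaf.} Choose a divisor $\mathfrak{b}$ on $C$ of degree $b=k+g-1$. By Riemann--Roch on $C$ we get $h^{0}(\mathcal{O}_{C}(\mathfrak{b}))\geq b+1-g=k$, and since $k\geq\max\{1,g\}$ we have $b\geq 0$. Put $L=\pi^{*}\mathcal{O}_{C}(\mathfrak{b})\equiv bf$, so that $h^{0}(L)\geq k$. Then
\[
(C_{0}+\mathfrak{m}f).H=\beta-\alpha e+\alpha m, \qquad L.H=\alpha b .
\]
The upper bound on $k$ is equivalent to
\[
2\alpha(k+g-1)<\beta-\alpha(e-m),
\]
that is, $2L.H<c_{1}.H$. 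So $L$ does not destabilize, and the window for $k$ is exactly what makes this choice possible.

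\textbf{Construction of $E$.} Let $Z\subset X$ be a general $0$-dimensional subscheme of length $\ell=c_{2}-L.(c_{1}-L)=c_{2}-b$. I look for a locally free extension
\[
0\longrightarrow L\longrightarrow E\longrightarrow \mathcal{I}_{Z}(C_{0}+(\mathfrak{m}-\mathfrak{b})f)\longrightarrow 0 .
\]
By the Serre correspondence such an $E$ exists if $Z$ satisfies the Cayley--Bacharach property with respect to $K_{X}+C_{0}+(\mathfrak{m}-2\mathfrak{b})f$. I would get this from the vanishing $h^{0}(\mathcal{I}_{Z'}(K_{X}+C_{0}+(\mathfrak{m}-2\mathfrak{b})f))=0$ for every colength-one $Z'\subset Z$. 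This holds because $Z$ is general and $\ell$ exceeds $h^{0}(K_{X}+C_{0}+(\mathfrak{m}-2\mathfrak{b})f)+1$ once $c_{2}\gg 0$. The Chern classes come out as $c_{1}=C_{0}+\mathfrak{m}f$ and $c_{2}$, and $h^{0}(E)\geq h^{0}(L)\geq k$.

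\textbf{Stability, the main obstacle.} Let $M\subset E$ be a saturated line subbundle. There are two cases.
\begin{enumerate}
\item If $M\to\mathcal{I}_{Z}(c_{1}-L)$ is zero, then $M\subset L$ and $M.H\leq L.H<c_{1}.H/2$.
\item Otherwise $D=c_{1}-L-M$ is effective and $h^{0}(\mathcal{I}_{Z}(D))\neq 0$. Since $Z$ is general of length $\ell$, this forces $h^{0}(\mathcal{O}_{X}(D))>\ell$.
\end{enumerate}
In the second case, write $M\equiv aC_{0}+b'f$. Effectivity gives $a\leq 1$. For effective $D$ on a ruled surface one has a bound $h^{0}(\mathcal{O}_{X}(D))\leq P(D.H)$ with $P$ an explicit quadratic polynomial depending only on $X$ and $H$. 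Hence $D.H$ must be at least of order $\sqrt{\ell}$, so
\[
M.H=c_{1}.H-L.H-D.H<c_{1}.H/2
\]
once $c_{2}\gg 0$. This uniform bound is the hardest step. I would first try to prove it by splitting into $a=1$, where $D$ is a sum of fibres and $h^{0}$ grows linearly in $D.H$, and $a\leq 0$. In the latter case I would compute $h^{0}$ through $\pi_{*}$ as cohomology of symmetric powers of the rank-two bundle on $C$, and check that the resulting bound, which depends on $\alpha,\beta$, forces $M.H<c_{1}.H/2$.

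With stability established, $E\in M_{H}(2;C_{0}+\mathfrak{m}f,c_{2})$ has $h^{0}(E)\geq k$, hence $E\in W^{k}_{H}$. The hypothesis $M_{H}\neq\varnothing$ is then consistent, and it is also used to invoke irreducibility and dimension statements such as Corollary \ref{Corollary 4.5}.
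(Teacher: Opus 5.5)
The paper gives no proof of this statement; it is quoted from Costa--Mac\'ias-Tarr\'io, so your proposal has to stand on its own. Its architecture is the right one. The upper bound on $k$ is exactly the inequality $2L.H<c_1.H$ for $L=\pi^*\mathcal{O}_C(\mathfrak b)$ with $\deg\mathfrak b=k+g-1$. The lower bound $k\ge g$ even gives $\deg\mathfrak b\ge 2g-1$, hence $h^0(L)=k$ exactly.

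Two minor remarks. First, the Cayley--Bacharach step needs no genericity: $K_X+C_0+(\mathfrak m-2\mathfrak b)f\equiv -C_0+(\cdots)f$ has no sections at all, since $\pi_*\mathcal{O}_X(-C_0)=0$. So a nontrivial locally free extension exists for every $Z$. Second, your construction itself produces a point of $M_H$. Hence the hypothesis $M_H\neq\varnothing$ and Corollary~\ref{Corollary 4.5} play no role, and you never use $\alpha(e+m)<\beta$ either.

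\textbf{The gap is in case (2) of the stability argument.} You fix a general $Z$ first, and only then does $D=c_1-L-M$ vary with $M$. The implication ``$Z$ general and $h^0(\mathcal{I}_Z(D))\neq 0$ $\Rightarrow$ $h^0(\mathcal{O}_X(D))>\ell$'' is Proposition~\ref{Proposition 1.27}, which concerns one fixed $D$. Even for $g=0$ you may invoke it only for finitely many $D$ at once.

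For $g\ge 1$ the candidate $D$ move in $\mathrm{Pic}^0(C)$-families, and the implication is false. Let $Z$ consist of $\ell\ge 2g-1$ points lying over distinct $p_1,\dots,p_\ell\in C$, and put $D=\pi^*(p_1+\cdots+p_\ell)$. Then $h^0(\mathcal{I}_Z(D))\neq 0$, but $h^0(\mathcal{O}_X(D))=\ell+1-g\le\ell$. The quadratic bound $h^0(\mathcal{O}_X(D))\le P(D.H)$, which you flag as the hardest step, does not address this.

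\textbf{The repair.} In case (2), a destabilizing $M$ gives $D.H\le\delta:=c_1.H/2-L.H$, a positive constant independent of $c_2$.
\begin{enumerate}
\item Effective divisors of $H$-degree at most $\delta$ lie in finitely many numerical classes. Each class is a $\mathrm{Pic}^0(C)$-torsor of linear systems.
\item The curves they cut out therefore form a bounded family of some dimension $N\le g-1+\max h^0(\mathcal{O}_X(D'))$.
\item Reduced $Z$ of length $\ell$ lying on one of these curves form a locus of dimension at most $N+\ell$ in $\mathrm{Hilb}^{\ell}(X)$. This is less than $2\ell$ once $\ell=c_2-b>N$.
\end{enumerate}
So a general $Z$ lies on none of these curves, and case (2) cannot occur. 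With this dimension count in place of the $h^0$ comparison and the $\sqrt{\ell}$ estimate, your argument goes through, and no computation via symmetric powers is needed.
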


\begin{corollary}\cite[Corollary 4.11]{Costa3}\label{Corollary 3.10 Costa}
Let $X$ be a ruled surface over a nonsingular curve $C$ of genus $g\geq 0$, $c_{2}>>0$ an integer and $H\equiv\sigma+\beta f$ an ample divisor on $X$. Then, $W^{k}_{H}(2;f,c_{2})\neq\varnothing$ if and only if $1\leq k\leq 2$.
\end{corollary}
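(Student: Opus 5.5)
The plan is to prove the two directions separately. First I would show the upper bound $h^{0}(E)\leq 2$ for every $E\in M_{H}(2;f,c_{2})$. Then I would show that $W^{1}_{H}$ and $W^{2}_{H}$ are non-empty by an explicit Serre-type construction. I write $C_{0}$ for the minimal section, with $C_{0}^{2}=-e$; on $\Sigma_{n}$ this is $\sigma$ with $e=n$. The key numerical fact is that for $H\equiv C_{0}+\beta f$ we have $f.H=1$, so $\mu_{H}(E)=1/2$.

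\textbf{Upper bound.} Suppose $h^{0}(E)\geq 1$ and pick a non-zero section. By the factorization recalled after Remark \ref{Lemma 4.2}, it yields a saturated line subbundle $\mathcal{O}_{X}(D)\subset E$ with $D\geq 0$ and torsion-free quotient. Stability forces $D.H<1/2$. Write $D=aC_{0}+bf$ effective, so $a\ge0$ and $D.H=a(\beta-e)+b$. Ampleness gives $\beta>e$ in the relevant range, and $D$ effective with $D.H\geq 0$ integral then forces $D.H=0$, hence $D=0$. So every non-zero section vanishes only in codimension two, and we get
\begin{align*}
0\longrightarrow\mathcal{O}_{X}\longrightarrow E\longrightarrow\mathcal{I}_{Z}(f)\longrightarrow 0,\qquad |Z|=c_{2}>0 .
\end{align*}
Hence $h^{0}(E)\leq 1+h^{0}(\mathcal{I}_{Z}(f))$. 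Sections of $\mathcal{O}_{X}(f)$ are pull-backs of sections of a degree-one line bundle on $C$, so they form a pencil when $g=0$ and at most a single section otherwise. Since $Z\neq\varnothing$ imposes at least one condition on the pencil, $h^{0}(\mathcal{I}_{Z}(f))\leq 1$. Therefore $h^{0}(E)\leq 2$, and $W^{k}_{H}=\varnothing$ for $k\geq 3$.

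\textbf{Existence.} Take $Z$ a $0$-dimensional subscheme of length $c_{2}$ and consider extensions as above. Since $K_{X}+f\equiv -2C_{0}+(\dots)f$ has negative $C_{0}$-coefficient, $h^{0}(\mathcal{O}_{X}(K_{X}+f))=0$. The Cayley--Bacharach condition is therefore vacuous, and a locally free extension $E$ exists for any $Z$.

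For stability, let $L\subset E$ be a line subbundle. Either $L\subset\mathcal{O}_{X}$, so $L.H\le 0<1/2$, or $L$ maps non-trivially to $\mathcal{I}_{Z}(f)$. In the second case $f-L$ is effective and non-zero, because it must vanish on $Z\neq\varnothing$. Hence $(f-L).H\geq 1$, which gives $L.H\leq 0<1/2$. So $E$ is $H$-stable with $h^{0}(E)\geq 1$, which proves $W^{1}_{H}\neq\varnothing$. The hypothesis $c_{2}\gg0$ is used only to guarantee that $M_{H}$ is non-empty and of the expected dimension (Proposition \ref{Proposition 4.4}), and that the required configurations of $Z$ exist.

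For $k=2$ I would instead choose $Z$ to consist of $c_{2}$ general points on a single fibre $f_{0}$, so that $h^{0}(\mathcal{I}_{Z}(f))=1$. The stability argument above still applies. It remains to get $h^{0}(E)=2$, that is, to lift the section of $\mathcal{I}_{Z}(f)$ vanishing on $f_{0}$. For $g=0$ this is automatic since $H^{1}(\mathcal{O}_{X})=0$.

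\textbf{Main obstacle.} I expect the main obstacle to be the case $g\geq 1$. There one must choose the extension class in $\mathrm{Ext}^{1}(\mathcal{I}_{Z}(f),\mathcal{O}_{X})$ so that the connecting map $H^{0}(\mathcal{I}_{Z}(f))\to H^{1}(\mathcal{O}_{X})$ kills that section while the extension stays locally free. I would try to do this by a dimension count on the Ext group, which grows with $c_{2}$, against the at most $g$ linear conditions imposed by the connecting map.
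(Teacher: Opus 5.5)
The paper does not prove this statement; it is quoted from \cite[Corollary 4.11]{Costa3}, so your argument has to stand on its own.

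\textbf{What is correct.} Most of the argument works. The bound $h^{0}(E)\leq 2$ (via $D.H<1/2\Rightarrow D=0$) is correct. The Cayley--Bacharach condition is indeed vacuous. The stability argument is also correct: for $L\hookrightarrow\mathcal{I}_{Z}(f)$ the divisor $f-L$ is non-zero effective, so $L.H\leq f.H-1=0$. This is the ``considerably simpler'' case $\alpha=1$ that the paper omits in Theorem~\ref{Theorem 3.4}(4). For $g=0$, putting $Z$ on one fibre gives $W^{2}_{H}\neq\varnothing$ for every $c_{2}\geq 1$. The paper works with general $Z$ and only gets $h^{0}(E)=2$ for $c_{2}=1$. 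Two side remarks. For $e<0$ ampleness of $C_{0}+\beta f$ means $\beta>e/2$, not $\beta>e$, but you only use that $H$ is ample with $f.H=1$. Also, Proposition~\ref{Proposition 4.4} concerns rational surfaces, but you do not need it, since your construction produces stable bundles directly.

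\textbf{The gap: $k=2$ for $g\geq 1$.} Here you only sketch a dimension count, and that does not suffice. Take $F$ the fibre with $\mathcal{O}_{X}(f)=\mathcal{O}_{X}(F)$, $Z\subset F$ reduced of length $c_{2}$, and $s$ the equation of $F$, which spans $H^{0}(\mathcal{I}_{Z}(f))$. The connecting map sends $s$ to $s^{*}\xi\in H^{1}(\mathcal{O}_{X})$, linearly in $\xi$, so $V_{s}=\ker(\xi\mapsto s^{*}\xi)$ has codimension $\leq g$. However, the classes giving non-locally-free sheaves form the union of the $c_{2}$ hyperplanes where the local component at some $z\in Z$ vanishes. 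A dimension count cannot prevent $V_{s}$ from lying inside one of them.

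\textbf{The missing input.} You need the structure of the Ext group. Apply $\mathrm{Hom}(-,\mathcal{O}_{X})$ to $0\to\mathcal{I}_{Z}(f)\xrightarrow{j}\mathcal{O}_{X}(f)\to\mathcal{O}_{Z}\to 0$. Using $\mathrm{Ext}^{1}(\mathcal{O}_{Z},\mathcal{O}_{X})=0$ and $H^{2}(\mathcal{O}_{X}(-f))\cong H^{0}(\mathcal{O}_{X}(K_{X}+f))^{*}=0$, this gives
\[
0\longrightarrow H^{1}(\mathcal{O}_{X}(-f))\xrightarrow{\;j^{*}\;}\mathrm{Ext}^{1}(\mathcal{I}_{Z}(f),\mathcal{O}_{X})\longrightarrow \mathrm{Ext}^{2}(\mathcal{O}_{Z},\mathcal{O}_{X})\cong\bigoplus_{z\in Z}\mathbb{C}\longrightarrow 0,
\]
where the last map records the local components. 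On $j^{*}H^{1}(\mathcal{O}_{X}(-f))$ the map $\xi\mapsto s^{*}\xi$ equals $(j\circ s)^{*}$. That is multiplication by the equation of $F$, $H^{1}(\mathcal{O}_{X}(-F))\to H^{1}(\mathcal{O}_{X})$. This is an isomorphism by $0\to\mathcal{O}_{X}(-F)\to\mathcal{O}_{X}\to\mathcal{O}_{F}\to 0$, because $H^{0}(\mathcal{O}_{X})\to H^{0}(\mathcal{O}_{F})$ is bijective and $H^{1}(\mathcal{O}_{F})=0$.

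Hence $V_{s}$ maps isomorphically onto $\bigoplus_{z\in Z}\mathbb{C}$, so it contains a class with all local components non-zero. That extension is locally free, $H$-stable by your argument, and has $h^{0}(E)=1+h^{0}(\mathcal{I}_{Z}(f))=2$. With this step added, your proof covers all $g\geq 0$, and in fact all $c_{2}\geq 1$, not just $c_{2}\gg 0$.
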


Corollary \ref{Corollary 3.10 Costa} ensures that $W^{k}_{H}(2;f,c_{2})\neq\varnothing$ on $\Sigma_{n}$ if $k=1,2$ and $c_{2}>>0$, but only for the ample divisor $H\equiv\sigma+\beta f$. In our case, we prove the same, but for a general ample divisor $H\equiv\alpha\sigma+\beta f$, but much more restrictive for the case $k=2$ with respect to $c_{2}$ as will be seen in case (4) of Theorem \ref{Theorem 3.4}.

L. Filimon in \cite{Filimon} gives a smoothness criterion for the varieties $W^{k}_{H}$. For $E\in W^{k}_{H}\setminus W^{k+1}_{H}$ we will consider the map
\begin{align*}
    \mu_{E}:H^{0}(E)\otimes H^{1}(E^{*}\otimes\mathcal{O}_{X}(K_{X}))\longrightarrow H^{1}(E\otimes E^{*}\otimes\mathcal{O}_{X}(K_{X})).
\end{align*}
Then the smoothness of $W^{k}_{H}$ will be linked to the injectivity of $\mu_{E}$. More precisely, we have the following:

\begin{corollary}\cite[Corollary 2.6]{Filimon} \label{Corollary 4.1}
Let $E\in W_{H}^{k}\setminus W_{H}^{k+1}$. Then the Zariski tangent space of $W_{H}^{k}$ at $E$ is
\begin{align*}
    T_{E}W_{H}^{k}\cong(\text{Im}(\mu_{E}))^{\perp}.
\end{align*}
Moreover, if $M_{H}$ is smooth at $E$, then $W_{H}^{k}$ is smooth and of expected dimension $\rho_{H}^{k}$ at $E$ if and only if $\mu_{E}$ is injective.
\end{corollary}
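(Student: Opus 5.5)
The plan is the standard deformation-theoretic description of determinantal Brill--Noether loci, combined with Serre duality. First I would fix the local determinantal structure of $W^{k}_{H}$ given by Corollary \ref{Corollary 4.5}. Near $E$, choose a universal (or local versal) family $\mathcal{E}$ over an open set $U\subset M_{H}$ and a two-term complex of vector bundles $K^{0}\xrightarrow{\varphi}K^{1}$ on $U$ computing $R\pi_{*}\mathcal{E}$. This is possible because the hypothesis $c_{1}.H\geq rK_{X}.H$, together with stability, forces
\begin{align*}
H^{2}(E)\cong\text{Hom}(E,\mathcal{O}_{X}(K_{X}))^{*}=0.
\end{align*}
Then $W^{k}_{H}\cap U$ is the locus where $\text{rk}\,\varphi\leq \text{rk}K^{0}-k$, with its Fitting-ideal scheme structure. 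Its expected codimension is $k\cdot h^{1}(E)=k(k-\chi(E))$, which matches the formula for $\rho^{k}_{H}$.

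Second, I would compute the Zariski tangent space at a point $E$ with $h^{0}(E)=k$ exactly. For a determinantal locus at a point of exact corank, a tangent vector $v\in T_{E}M_{H}$ lies in $T_{E}W^{k}_{H}$ if and only if the induced map $\ker\varphi(E)\to\text{coker}\,\varphi(E)$, $x\mapsto d\varphi(v)(x)$, vanishes. Identify $T_{E}M_{H}=H^{1}(E\otimes E^{*})=\text{Ext}^{1}(E,E)$, $\ker\varphi(E)=H^{0}(E)$ and $\text{coker}\,\varphi(E)=H^{1}(E)$. Under these identifications the map becomes the cup product
\begin{align*}
H^{0}(E)\to H^{1}(E),\qquad s\mapsto v\cup s,
\end{align*}
which is the obstruction to extending $s$ to a first-order section on the deformation determined by $v$. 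This is the step I expect to be the main obstacle. One must check that the scheme-theoretic determinantal tangent space really equals this cup-product condition, which amounts to verifying that $d\varphi$ is compatible with the Kodaira--Spencer class. I would do this with a Čech computation on the first-order family $\mathcal{E}_{v}$ over $\text{Spec}\,\mathbb{C}[\epsilon]$.

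Third, I would dualize. By Serre duality $H^{1}(E)^{*}\cong H^{1}(E^{*}\otimes\mathcal{O}_{X}(K_{X}))$ and $H^{1}(E\otimes E^{*})^{*}\cong H^{1}(E\otimes E^{*}\otimes\mathcal{O}_{X}(K_{X}))$, and cup products are compatible with these trace pairings. Hence for $s\in H^{0}(E)$ and $t\in H^{1}(E^{*}\otimes\mathcal{O}_{X}(K_{X}))$,
\begin{align*}
\langle v\cup s,t\rangle=\langle v,\mu_{E}(s\otimes t)\rangle .
\end{align*}
Therefore $v\cup s=0$ for all $s$ if and only if $v\perp\text{Im}(\mu_{E})$, which gives $T_{E}W^{k}_{H}\cong(\text{Im}\,\mu_{E})^{\perp}$.

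Finally, for the smoothness statement, assume $M_{H}$ is smooth at $E$. The tangent space computation gives
\begin{align*}
\dim T_{E}W^{k}_{H}=\dim M_{H}-\text{rk}\,\mu_{E}\geq \dim M_{H}-k\,h^{1}(E)=\rho^{k}_{H},
\end{align*}
using $h^{1}(E^{*}\otimes\mathcal{O}_{X}(K_{X}))=h^{1}(E)=k-\chi(E)$, with equality if and only if $\mu_{E}$ is injective. On the other hand, every component of $W^{k}_{H}$ through $E$ has dimension at least $\rho^{k}_{H}$ by Corollary \ref{Corollary 4.5}. So if $\mu_{E}$ is injective, then $\dim T_{E}W^{k}_{H}=\rho^{k}_{H}\leq\dim_{E}W^{k}_{H}\leq\dim T_{E}W^{k}_{H}$, and $W^{k}_{H}$ is smooth of dimension $\rho^{k}_{H}$ at $E$. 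Conversely, if $W^{k}_{H}$ is smooth of dimension $\rho^{k}_{H}$ at $E$, then $\dim T_{E}W^{k}_{H}=\rho^{k}_{H}$, which forces $\text{rk}\,\mu_{E}=k\,h^{1}(E)$, i.e. $\mu_{E}$ is injective.
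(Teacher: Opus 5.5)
The paper gives no proof of this statement. It is quoted verbatim from Filimon's Corollary 2.6, which rests on the determinantal structure of Costa--Mir\'o-Roig recalled in Corollary~\ref{Corollary 4.5}. Your proposal is a correct reconstruction of the standard argument behind that citation:
\begin{enumerate}
\item $H^{2}(E)=0$ follows from $c_{1}.H\geq rK_{X}.H$ and stability, as you say: a nonzero map $E\to\mathcal{O}_{X}(K_{X})$ would give a rank-one quotient of slope at most $K_{X}.H\leq\mu_{H}(E)$. This yields the two-term complex.
\item At a point of exact corank, the Fitting-ideal tangent space is cut out by the induced map $\ker\varphi(E)\to\text{coker}\,\varphi(E)$.
\item Serre duality and the dimension squeeze against Corollary~\ref{Corollary 4.5} then finish exactly as you describe.
\end{enumerate}

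The one genuinely nontrivial step is the one you flag: identifying $s\mapsto[d\varphi(v)(s)]$ with $s\mapsto v\cup s$. Rather than a \v{C}ech computation, it is cleaner to argue as follows. Let $K^{\bullet}_{v}$ be the pullback of the complex along $v\colon\text{Spec}\,\mathbb{C}[\epsilon]\to U$. The map $s\mapsto[d\varphi(v)(s)]$ is the connecting homomorphism of $0\to\epsilon K^{\bullet}(E)\to K^{\bullet}_{v}\to K^{\bullet}(E)\to 0$. By base change $K^{\bullet}_{v}$ computes $R\Gamma(\mathcal{E}_{v})$, so this is the connecting map of $0\to E\to\mathcal{E}_{v}\to E\to 0$. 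That map is cup product with the Kodaira--Spencer class.

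Compared with the citation, your version buys self-containedness. Make two small points explicit. First, the tangent-space assertion uses $T_{E}M_{H}\cong\text{Ext}^{1}(E,E)$, which holds at any stable point even without smoothness of $M_{H}$. Second, in the squeeze, $\dim M_{H}$ inside $\rho^{k}_{H}$ must be read as the local dimension $\dim_{E}M_{H}=\dim T_{E}M_{H}$. In the paper's setting this is automatically $4c_{2}-c_{1}^{2}-3$ by Proposition~\ref{Proposition 4.4}.
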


\section{Segre invariant for rank 2 vector bundles on $\Sigma_{n}$}

In this section we study the Segre invariant for vector bundles of rank $2$ on $\Sigma_{n}$. The Segre invariant depends on the choice of an ample divisor $H$ on the surface $\Sigma_{n}$. Since $\text{Pic}(\mathbb{P}^2)\cong\mathbb{Z}$, there exists a unique notion of $H$-stability on $\mathbb{P}^2$, determined by the ample generator $\mathcal{O}_{\mathbb{P}^2}(1)$. Accordingly, in \cite{Roa}, the authors work with a unique stability condition. In contrast, Hirzebruch surfaces admit infinitely many ample divisors, and hence infinitely many notions of $H$-stability.

Since a rank two vector bundle $E$ on $\Sigma_{n}$ is $H$-stable if, and only if, $E\otimes\mathcal{O}_{\Sigma_{n}}(D)$ is $H$-stable for any divisor $D\in\text{Pic}(\Sigma_{n})$, we may assume, without loss of generality, that
$c_{1}(E)$ is one of the following: $0$, $\sigma$, $f$ or $\sigma+f$.

The technique we follow for the construction of the rank two vector bundles on the Hirzebruch surfaces is to construct the given bundle by a short exact sequence using the Cayley-Bacharach property \cite[Part II-Section 5.1]{Huybrechts}, then prove the $H$-stability, and finally prove that the line subbundle at the beginning of the sequence is a maximal line subbundle of the vector bundle.

\begin{theorem}\cite[Theorem 5.1.1]{Huybrechts}
Let $Z\subset X$ be a local complete intersection of codimension two, and let $L$ and $M$ be line bundles on $X$. Then there exists a nontrivial extension of the form
\begin{align*}
    0 \longrightarrow L\longrightarrow E\longrightarrow\mathcal{I}_{Z}(M) \longrightarrow 0
\end{align*}
such that $E$ is locally free if and only if the pair $(L^{*}\otimes M\otimes K_{X}, Z)$ has the Cayley-Bacharach property:

{\bf (CB):} If $Z'\subset Z$ is a subscheme with $|Z'|=|Z|-1$ and $s\in H^{0}(S, L^{*}\otimes M\otimes K_{X})$ with $s|_{Z'}=0$, then $s|_{Z}=0$.
\end{theorem}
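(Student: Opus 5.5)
The plan is to follow the classical Serre correspondence argument. First I identify the space of extensions, then I translate local freeness into a pointwise condition on the extension class, and finally I show that this condition can be met by some class exactly when (CB) holds.

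\textbf{Step 1: the extension space.} Put $N:=L^{*}\otimes M\otimes K_{X}$. By Serre duality,
\[
\text{Ext}^{1}(\mathcal{I}_{Z}(M),L)\cong \text{Ext}^{1}(L,\mathcal{I}_{Z}(M)\otimes K_{X})^{*}\cong H^{1}(\mathcal{I}_{Z}\otimes N)^{*}.
\]
Twisting $0\to\mathcal{I}_{Z}\to\mathcal{O}_{X}\to\mathcal{O}_{Z}\to 0$ by $N$ gives
\[
H^{0}(N)\longrightarrow H^{0}(N|_{Z})\longrightarrow H^{1}(\mathcal{I}_{Z}\otimes N)\longrightarrow H^{1}(N).
\]

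\textbf{Step 2: the local-to-global sequence.} I would also use the local-to-global spectral sequence, which yields the map
\[
\text{Ext}^{1}(\mathcal{I}_{Z}(M),L)\longrightarrow H^{0}\big(\mathcal{E}xt^{1}(\mathcal{I}_{Z}(M),L)\big).
\]
Since $Z$ is a local complete intersection of codimension two, $\mathcal{E}xt^{1}(\mathcal{I}_{Z},\mathcal{O}_{X})\cong\mathcal{E}xt^{2}(\mathcal{O}_{Z},\mathcal{O}_{X})\cong\omega_{Z}\otimes\omega_{X}^{*}$. This sheaf is locally isomorphic to $\mathcal{O}_{Z}$, so $H^{0}(\mathcal{E}xt^{1}(\mathcal{I}_{Z}(M),L))$ is non-canonically $H^{0}(N|_{Z})^{*}$. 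Under these identifications, this map is the dual of the connecting map $H^{0}(N|_{Z})\to H^{1}(\mathcal{I}_{Z}\otimes N)$ above. Checking this compatibility is the bookkeeping part of the proof.

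\textbf{Step 3: the local criterion.} Next I use Serre's local criterion. The middle term $E$ of the extension with class $\xi$ is locally free at $x\in Z$ if and only if the local component $\xi_{x}$ generates the stalk $\mathcal{E}xt^{1}(\mathcal{I}_{Z}(M),L)_{x}\cong\mathcal{O}_{Z,x}$. Locally, $E$ is then the Koszul-type extension $0\to\mathcal{O}\to\mathcal{O}^{2}\to\mathcal{I}_{Z}\to 0$. Conversely, if $\xi_{x}$ lies in the maximal ideal, then $E$ fails to be reflexive, hence fails to be locally free, at $x$. I expect this local computation to be the main technical obstacle. I would handle it by writing $\mathcal{I}_{Z,x}=(g_{1},g_{2})$ with its Koszul resolution and computing $\mathcal{E}xt^{1}$ explicitly.

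\textbf{Step 4: failure at a point as a linear condition.} The failure of $\xi$ at $x$ means that $\xi_{x}$ lies in the socle complement. Equivalently, $\xi$ annihilates (under the duality of Step 1) the image of the subspace $H^{0}(N\otimes\mathcal{I}_{Z'}/\mathcal{I}_{Z})\subset H^{0}(N|_{Z})$ for some $Z'\subset Z$ of colength one supported at $x$. Here I use that the socle of the Gorenstein ring $\mathcal{O}_{Z,x}$ is one-dimensional, so such a $Z'$ is unique at each $x$. This subspace is one-dimensional, spanned by an element $t$, and $t$ maps to zero in $H^{1}(\mathcal{I}_{Z}\otimes N)$ precisely when $t$ lifts to a section $s\in H^{0}(N)$. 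Such an $s$ vanishes on $Z'$ but not on $Z$. So the set of $\xi$ failing at $x$ is the whole extension space if (CB) fails for $Z'$, and is a proper hyperplane if (CB) holds for $Z'$.

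\textbf{Step 5: conclusion.} If (CB) holds, then $Z$ has finitely many points, so a general $\xi$ avoids the finitely many hyperplanes from Step 4. The corresponding extension is nontrivial and $E$ is locally free. If (CB) fails for some $Z'$ at a point $x$, then every extension fails to be locally free at $x$, including the trivial one. This gives the equivalence in the theorem.
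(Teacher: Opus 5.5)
The paper gives no proof of this statement; it quotes it from \cite[Theorem 5.1.1]{Huybrechts}, and your outline (Serre duality, identifying the local-to-global edge map with the dual of the boundary map $H^{0}(N|_{Z})\to H^{1}(\mathcal{I}_{Z}\otimes N)$, Serre's local generation criterion, the one-dimensional socle of the Gorenstein ring $\mathcal{O}_{Z,x}$ singling out a unique colength-one $Z'$ at each point, and avoiding finitely many hyperplanes) is essentially the standard proof given there. The only caveat is that ``nontrivial'' in your Step 5 needs $Z\neq\varnothing$: if $Z=\varnothing$ and $H^{1}(L\otimes M^{*})=0$, the split extension is the only one, so the statement as quoted is slightly too strong in that degenerate case, which never arises in the paper's applications since there $|Z|\geq 1$.
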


Note that the CB property clearly holds for all $Z$ if $H^{0}(X,L^{*}\otimes M\otimes K_{X})=0$.

\begin{proposition}\cite[Proposition 2.4]{Costa3}\label{Proposition 1.27}
Let $X$ be a smooth projective surface, $D$ a divisor on $X$ and $Z\subset X$ a general $0$-dimensional subscheme. If $|Z|\geq h^{0}(\mathcal{O}_{X}(D))$ then $h^{0}(\mathcal{I}_{Z}(D))=0$.
\end{proposition}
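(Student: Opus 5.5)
This is Proposition 2.4 of Costa and Macías-Tarrío, and I would prove it by induction on the number of points, imposing them one at a time. Write $Z$ as a general set of reduced points $\{p_{1},\dots,p_{m}\}$ with $m=|Z|\geq h^{0}(\mathcal{O}_{X}(D))=:N$. A general $0$-dimensional subscheme of length $m$ is reduced, since the reduced subschemes form a dense open subset of the Hilbert scheme of points on a smooth surface. It also suffices to treat $m=N$: adding more points only shrinks $H^{0}(\mathcal{I}_{Z}(D))\subset H^{0}(\mathcal{O}_{X}(D))$, and a general $Z$ of length $m$ contains a general subset of length $N$. If $N=0$ there is nothing to prove.

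The key step is the following claim. If $V\subset H^{0}(\mathcal{O}_{X}(D))$ is a nonzero subspace and $p\in X$ is a general point, then the subspace $V(-p):=\{s\in V : s(p)=0\}$ has codimension exactly one in $V$.

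To prove the claim, pick any nonzero $s\in V$. Its zero locus is a proper closed subset of $X$, because $X$ is irreducible and $s$ is a nonzero section of a line bundle. For $p$ outside this zero locus, evaluation at $p$ is a nonzero linear functional on $V$, so its kernel $V(-p)$ has codimension one. The set of such $p$ is a nonempty Zariski open subset of $X$.

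Applying the claim successively, I would define $V_{0}=H^{0}(\mathcal{O}_{X}(D))$ and $V_{i}=V_{i-1}(-p_{i})$. Choosing each $p_{i}$ in the nonempty open set attached to $V_{i-1}$ gives $\dim V_{i}=N-i$ as long as $V_{i-1}\neq 0$. Hence
\begin{align*}
    H^{0}(\mathcal{I}_{Z}(D))=V_{N}=0 .
\end{align*}

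The only point needing care is that this sequential choice yields an open dense condition on the tuple $(p_{1},\dots,p_{N})\in X^{N}$, rather than merely the existence of one good tuple. To see this, consider the evaluation map $H^{0}(\mathcal{O}_{X}(D))\to\bigoplus_{i}\mathcal{O}_{X}(D)_{p_{i}}$. After local trivializations it is given by an $N\times N$ matrix varying algebraically with the tuple. The locus where this matrix is injective is the non-vanishing of its determinant, so it is open. By the construction above it is nonempty.

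That open set is invariant under the symmetric group, so it descends to a dense open subset of the Hilbert scheme of $N$ points. Thus a general $Z$ of length $N$, and hence of any length $m\geq N$, satisfies $h^{0}(\mathcal{I}_{Z}(D))=0$.
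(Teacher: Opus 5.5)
Your proof is correct. The paper gives no argument of its own for this statement; it imports it from Costa and Macías-Tarrío. So there is nothing in the paper to compare against, and your proof serves as a self-contained justification. It is the standard argument that general points impose independent conditions on $H^{0}(\mathcal{O}_{X}(D))$.

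You can shorten the genericity bookkeeping. Since $\mathcal{I}_{\mathcal{Z}}\otimes\pi_{2}^{*}\mathcal{O}_{X}(D)$ is flat over $\mathrm{Hilb}^{m}(X)$, the function $Z\mapsto h^{0}(\mathcal{I}_{Z}(D))$ is upper semicontinuous there. Moreover, $\mathrm{Hilb}^{m}(X)$ is irreducible (Fogarty). It therefore suffices to exhibit a single $Z$ of length $m$ with $h^{0}(\mathcal{I}_{Z}(D))=0$. Your points $p_{1},\dots,p_{N}$, together with any $m-N$ further distinct points, provide one. This replaces the determinant argument, the descent from $X^{N}$ to the Hilbert scheme, and the reduction from $m$ to $N$ in one step.

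Two small remarks. First, distinctness of the $p_{i}$ is automatic: every element of $V_{i-1}$ vanishes at $p_{1},\dots,p_{i-1}$, so a point where some element of $V_{i-1}$ is nonzero cannot be one of them. Second, the evaluation map should land in the fibres $\mathcal{O}_{X}(D)\otimes k(p_{i})$, not in the stalks.
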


\begin{remark}\label{Observation 2.18}
Let $L\in\hbox{Pic}(X)$ non-effective and $\mathcal{I}_{Z}$ be an ideal sheaf with $Z\subset X$ a general $0$-dimensional subescheme. Then $\mathcal{O}_{X}(L)\hookrightarrow\mathcal{I}_{Z}$ exists if and only if there is an effective divisor $D$ such that $Z\subset D$, with $D$ linearly equivalent to $-L$.
\end{remark}

Let $L=a\sigma+bf$, $M=c\sigma+df$ in $\hbox{Pic}(\Sigma_{n})$ and $Z\subset\Sigma_{n}$ be a  local complete intersection of codimension two. For our interest, since we only work with four cases for our porpuses $c_{1}$ and we want there to exist a vector bundle of rank two $E$ on $\Sigma_{n}$ such that
\begin{align*}
    0\longrightarrow\mathcal{O}_{\Sigma_{n}}(L)\longrightarrow E\longrightarrow\mathcal{I}_{Z}(M)\longrightarrow 0,
\end{align*}
by \cite[Proposition 2.5]{Friedman} it is necessary that $a+c=\delta_{1}$ and $b+d=\delta_{2}$ where $\delta_{i}\in\{0,1\}$ and $c_{1}=\delta_{1}\sigma+\delta_{2}f$.  On the other hand, since $L^{*}\otimes M\otimes K_{\Sigma_{n}}=(c-a-2)\sigma+(d-b-n-2)f$, we have that $H^{0}(\Sigma_{n};\mathcal{O}_{\Sigma_{n}}(L^{*}\otimes M\otimes K_{\Sigma_{n}}))=0$ if $c-a-2<0$ or $d-b-n-2<0$. With this show that if $a\geq 0$ or $b\geq 0$, then the CB property is trivially satisfied.

\begin{remark}
The particular ample divisors $H$ featured in the following theorem (some parameterized by a scalar $t$ or $r$) arise naturally from the analysis of the stability conditions. The restriction $0<t,r\leq\alpha$ in the cases (3), (5) and (6) of Theorem \ref{Theorem 3.4} precisely delineates the region within the ample cone where the extensions under consideration preserve $H$-stability, thereby establishing the viability of the construction.
\end{remark}

\begin{theorem}\label{Theorem 3.4}
Let $c_{1}\in\{0,\sigma,f,\sigma+f\}$ and $H$ be an ample divisor on $\Sigma_{n}$. Assume that one of the following conditions holds:
\begin{enumerate}
    \item $H\equiv \alpha\sigma+\beta f$, with $\alpha>1$, $\beta=\alpha n+1$ and $c_{2}>n>0$,

    \item $H\equiv \sigma+\beta f$ and $c_{2}\geq 2$,

    \item $H=H_{t}\equiv \alpha\sigma+\beta_{t}f$, with $\beta_{t}=\alpha n+t$, where $0<t\leq\alpha$ and $c_{2}>0$,

    \item $H\equiv \alpha\sigma+\beta f$, with $\alpha>0$, $\beta>\alpha n+(\alpha-1)$ and $c_{2}\geq 1$, 

    \item $H=H_{r}\equiv \alpha\sigma+\beta_{r}f$, with $\beta_{r}=\alpha(n+1)+r$, where $0<r\leq\alpha$ and $c_{2}\geq 2$,

    \item $H=H_{r}\equiv \alpha\sigma+\beta_{r}f$, with $\beta_{r}=\alpha(n+1)+r$, where $0<r\leq\alpha$ and $c_{2}\geq 5$.
\end{enumerate}
Then there exists a vector bundle $E\in M_{\Sigma_{n},H}(2;c_{1},c_{2})$ with Segre invariant 
\[
S_H(E)=
\left\{
\begin{array}{c l}
2          & \text{if $c_{1}=0$ and $H$ is as in (1) or (2)},\\
r          & \text{if $c_{1}=\sigma$ and $H$ is as in (3) or $c_{1}=\sigma+f$                     and $H$ is as in (5)},\\
\alpha     & \text{if $c_{1}=f$ and $H$ is as in (4)},\\
2\alpha+r  & \text{if $c_{1}=\sigma+f$ and $H$ is as in (6)}.
\end{array}
\right.
\]
Furthermore, $E$ fits in an exact sequence
\begin{align}
    0\longrightarrow \mathcal{O}_{\Sigma_{n}}(L)\longrightarrow E\longrightarrow \mathcal{I}_{Z}(c_{1}-L)\longrightarrow 0,
\end{align}
where
\[
L=
\begin{cases}
-\sigma, & \text{if $c_{1}=0$ and $H$ is as in (1)},\\
-f, & \text{if $c_{1}=0$ and $H$ is as in (2)},\\
0, & \text{if $c_{1}\in\{\sigma,f,\sigma+f\}$ and $H$ is as in (3,4,6)},\\
f, & \text{if $c_{1}=\sigma+f$ and $H$ is as in (5)}.
\end{cases}
\]
and $\mathcal{O}_{\Sigma_{n}}(L)\subset E$ is maximal and $Z$ is a general $0$-dimensional subscheme of length
\[
|Z|=
\begin{cases}
c_2-n, & \text{if } L=-\sigma, \\
c_2, & \text{if } L\in\{0,-f\},\\
c_2-1, & \text{if } L=f.
\end{cases}
\]
\end{theorem}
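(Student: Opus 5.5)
The plan is the Serre construction followed by a degree comparison for line subbundles. In each of the six cases I fix $L$ as in the statement and set $M=c_{1}-L$. Then I check that the pair $(L^{*}\otimes M\otimes K_{\Sigma_{n}},Z)$ satisfies (CB), apply \cite[Theorem 5.1.1]{Huybrechts} to obtain a locally free non-trivial extension $E$, compute its invariants, and finally show that $\mathcal{O}_{\Sigma_{n}}(L)$ is maximal. Stability then follows automatically, because the resulting value of $S_H(E)$ is positive.

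\textbf{Existence.} By the remark preceding the theorem, (CB) holds trivially whenever $L^{*}\otimes M\otimes K_{\Sigma_{n}}=(c-a-2)\sigma+(d-b-n-2)f$ has a negative coefficient.
\begin{itemize}
\item For $L\in\{0,f\}$ this is immediate.
\item For $L=-\sigma$, $c_{1}=0$, the divisor is $-(n+2)f$.
\item For $L=-f$, $c_{1}=0$, the divisor is $-2\sigma-nf$.
\end{itemize}
In every case $H^{0}=0$, so any $Z$ works. Since $h^{0}(\mathcal{O}_{\Sigma_{n}}(L))$ is $0$ or $1$, we need $\mathrm{Ext}^{1}(\mathcal{I}_{Z}(M),\mathcal{O}(L))\neq 0$; this is what Serre's theorem provides once $|Z|\ge 1$.

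The Chern classes are $c_{1}(E)=c_{1}$ and $c_{2}(E)=L\cdot(c_{1}-L)+|Z|$. This gives $|Z|=c_{2}-n$ for $L=-\sigma$ (since $-\sigma\cdot\sigma=n$), $|Z|=c_{2}-1$ for $L=f$ (since $f\cdot\sigma=1$), and $|Z|=c_{2}$ otherwise. The hypotheses on $c_{2}$ (namely $c_{2}>n$, $c_{2}\ge 2$, and so on) are what guarantee $|Z|$ is large enough for the later step.

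The candidate value $S:=(c_{1}-2L)\cdot H$ follows from $\sigma\cdot H=\beta-\alpha n$ and $f\cdot H=\alpha$:
\begin{itemize}
\item case (1): $S=2\sigma\cdot H=2$;
\item case (2): $S=2f\cdot H=2$;
\item case (3): $S=\sigma\cdot H=t$;
\item case (4): $S=f\cdot H=\alpha$;
\item case (6): $S=(\sigma+f)\cdot H=2\alpha+r$;
\item case (5): $S=(\sigma-f)\cdot H=r$.
\end{itemize}
These are exactly the stated values, and all are positive.

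\textbf{Maximality.} Let $\mathcal{O}(N)\subset E$ be any line subbundle. If the composite $\mathcal{O}(N)\to\mathcal{I}_{Z}(M)$ vanishes, then $N\hookrightarrow L$, so $L-N$ is effective and $N\cdot H\le L\cdot H$. Otherwise $h^{0}(\mathcal{I}_{Z}(G))\neq 0$ with $G=M-N$. Since $Z$ is general, Proposition \ref{Proposition 1.27} forces $G=p\sigma+qf$ to be effective with $h^{0}(\mathcal{O}(G))>|Z|$. Because $N\cdot H\le L\cdot H$ is equivalent to $G\cdot H\ge S$, it suffices to prove the following:
\[
\text{every effective } G \text{ with } G\cdot H=p(\beta-\alpha n)+q\alpha<S \text{ satisfies } h^{0}(\mathcal{O}(G))\le |Z|.
\]

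This is the main obstacle, and I will handle it by case-by-case enumeration using Lemma \ref{Lemma 1.23} and Proposition \ref{Proposition 2.3}. The set of such $G$ is finite and small.
\begin{itemize}
\item Case (1), using $\alpha>1$: only $G\in\{0,\sigma\}$, with $h^{0}\le 1\le c_{2}-n$.
\item Case (2): $G=p\sigma$ with $p(\beta-n)<2$ gives $h^{0}=1$, and $G=f$ is excluded since $f\cdot H=1<2$ fails only in the boundary case, which needs checking; in any event $h^{0}(f)=2\le c_{2}$.
\item Cases (3) and (4): essentially only $G=0$ and multiples of $\sigma$ or $f$ with small degree, each with $h^{0}\le 1$.
\item Cases (5) and (6): here $S$ is largest, so $G$ may be $f$, $\sigma+f$, $2f$, and so on. These have $h^{0}$ up to about $4$ or $5$, which is why $c_{2}\ge 5$ is required in (6).
\end{itemize}
I expect the delicate points to be the boundary degrees where $G\cdot H=S$ exactly (these still give $N\cdot H\le L\cdot H$ and so are harmless), and keeping $0<r,t\le\alpha$ in view to exclude the divisors with $q\ge 1$. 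With this inequality established, $S_H(E)=S>0$, so $E$ is $H$-stable and $\mathcal{O}(L)$ is maximal.
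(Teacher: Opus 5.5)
Your proof is correct. The construction step (the pairs $(L,c_1-L)$ for which (CB) holds trivially, the Chern class bookkeeping and the candidate values $S=(c_1-2L)\cdot H$) matches the paper. The stability and maximality step follows a genuinely different route.

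\textbf{How the paper argues.} It first proves $H$-stability by bounding $\mu_H(\mathcal{O}(J))$ for every $J$ with $\mathcal{O}(J)\hookrightarrow\mathcal{I}_Z(c_1-L)$. It then proves maximality separately:
\begin{itemize}
\item in cases (1) and (2) by parity: $S_H(E)$ is a positive even integer, and $\mathcal{O}(L)$ realizes the value $2$;
\item in cases (3)--(6) by assuming a maximal $\mathcal{O}(G)$ with $0<\deg_H G<\mu_H(E)$, writing $0\to\mathcal{O}(G)\to E\to\mathcal{I}_{Z'}(c_1-G)\to 0$, and playing this off against the known value of $h^0(E)$ (at least $1$, and exactly $1$ in case (6)).
\end{itemize}

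\textbf{How you argue.} You use a single criterion. A subbundle $\mathcal{O}(N)$ not factoring through $\mathcal{O}(L)$ gives a nonzero section of $\mathcal{I}_Z(G)$ with $G=c_1-L-N$ effective, and $N\cdot H>L\cdot H$ exactly when $G\cdot H<S$. Only finitely many effective $G$ satisfy this, so one general $Z$ handles all of them via Proposition~\ref{Proposition 1.27}.

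\textbf{What each route buys.} Yours gives maximality and stability at once and never uses $h^0(E)$. It also places the genericity where it belongs: on the fixed $Z$, against finitely many linear systems. The paper instead, in case (6), invokes genericity of $Z$ against all the systems $|(1-d)f|$, and treats the induced $Z'$ as general when excluding $G=\sigma-f$. Your argument excludes that $G$ directly, since it corresponds to a section of $\mathcal{I}_Z(2f)$. The paper's route, in exchange, computes $h^0(E)$ along the way, and Section 4 uses this.

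\textbf{Corrections to your enumeration.} It is simpler than you expect, and some of your guesses are off.
\begin{itemize}
\item Cases (3), (4) and (5): the only effective $G$ with $G\cdot H<S$ is $G=0$. In case (5), $S=r$ is small, not large. So maximality holds for every nonempty $Z$.
\item Case (1): the set is $\{0,\sigma\}$.
\item Case (2): the set is $\{0,f\}$, plus $\sigma$ when $\beta=n+1$. Here $G=f$ is never excluded, and $h^0(\mathcal{O}(\sigma))=2$ on $\Sigma_0$. Still, all have $h^0\le 2\le c_2$.
\item Case (6): the set is $\{0,f,2f,\sigma\}$, with $h^0\le 3$. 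The divisor $\sigma+f$ lies exactly on the boundary $G\cdot H=S$, so it needs no check.
\end{itemize}
Consequently your method proves case (6) already for $c_2\ge 3$. The hypothesis $c_2\ge 5$ is an artifact of the paper's method, which needs $|Z'|=c_2-2\ge h^0(\mathcal{O}(2f))$. It does not come from $h^0$ values ``up to 4 or 5''.

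\textbf{Minor slip.} $h^0(\mathcal{O}(L))=2$ when $L=f$. This is harmless: a locally free extension exists by Serre's theorem once (CB) holds, and it is automatically non-split since $Z\neq\varnothing$.
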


\begin{proof}
Note that if $L\in\{-\sigma,-f,0,f\}$ then we have that $H^{0}(\mathcal{O}_{\Sigma_{n}}(L^{*}\otimes M\otimes K_{\Sigma_{n}}))=0$ where $M=c_{1}-L$, and therefore the CB property is trivially satisfied for any $0$-dimensional subscheme $Z$ in each of the following cases. Furthermore, note that if $E$ is a vector bundle of rank two on a Hirzebruch surface $\Sigma_{n}$ given by an extension of type
\begin{align*}
        0 \longrightarrow \mathcal{O}_{\Sigma_{n}}(L)\longrightarrow E\longrightarrow \mathcal{I}_{Z}(c_{1}-L) \longrightarrow 0, 
    \end{align*}
then, if $\mathcal{O}_{\Sigma_{n}}(J)\hookrightarrow E$ is a line subbundle, we have two possibilities: $\mathcal{O}_{\Sigma_{n}}(J)\hookrightarrow\mathcal{O}_{\Sigma_{n}}(L)$ or $\mathcal{O}_{\Sigma_{n}}(J)\hookrightarrow\mathcal{I}_{Z}(c_{1}-L)$. If $\mathcal{O}_{\Sigma_{n}}(J)\hookrightarrow\mathcal{O}_{\Sigma_{n}}(L)$, then $L-J$ is effective, so if $H$ is an ample divisor on $\Sigma_{n}$, then $(L-J).H\geq 0$, that is, $\mu_{H}(\mathcal{O}_{\Sigma_{n}}(J))\leq\mu_{H}(\mathcal{O}_{\Sigma_{n}}(L))$. Therefore, if we want to verify that a line subbundle $\mathcal{O}_{\Sigma_{n}}(J)$ such that $\mathcal{O}_{\Sigma_{n}}(J)\hookrightarrow E$ and $\mathcal{O}_{\Sigma_{n}}(J)\hookrightarrow\mathcal{O}_{\Sigma_{n}}(L)$ does not destabilize $E$, it suffices to show that $\mu_{H}(\mathcal{O}_{\Sigma_{n}}(L))\leq\mu_{H}(E)$. In the following proofs, it can be easily verified that the condition $\mu_{H}(\mathcal{O}_{\Sigma_{n}}(L))\leq\mu_{H}(E)$ is always met, therefore for the stability of the vector bundle $E$ we will focus on when the line subbundle $\mathcal{O}_{\Sigma_{n}}(J)\hookrightarrow E$ is such that $\mathcal{O}_{\Sigma_{n}}(J)\hookrightarrow\mathcal{I}_{Z}(c_{1}-L)$.

\begin{enumerate}[leftmargin=*]
    \item Assume that $H\equiv \alpha\sigma+\beta f$, with $\alpha>1$, $\beta=\alpha n+1$ and $c_{2}>n>0$. Let $Z\subset\Sigma_{n}$ be a general $0$-dimensional subscheme, therefore $Z\not\subset\sigma$, and let $L=-\sigma$ and $M=\sigma$ be line bundles on $\Sigma_{n}$. Then by CB property there exists a non trivial extension of the form
    \begin{align}
        0 \longrightarrow \mathcal{O}_{\Sigma_{n}}(-\sigma)\longrightarrow E\longrightarrow \mathcal{I}_{Z}(\sigma) \longrightarrow 0,  \label{Teorema 3.1suc}
    \end{align}
    where $E$ is locally free  and has Chern classes $c_{1}=0$ and $c_{2}=|Z|-n$.

    Let us prove that the vector bundle $E$ is $H$-stable. Assume $\mathcal{O}_{\Sigma_{n}}(J)\hookrightarrow\mathcal{I}_{Z}(\sigma)$ where $J=c\sigma+df$. Then $\sigma-J=(1-c)\sigma-df$ is an effective divisor, and therefore $c\leq 1$ and $d\leq 0$.

    Suppose that $c=1$, so $J=\sigma+df$. If $d=0$, then $\mathcal{O}_{\Sigma_{n}}(\sigma)\hookrightarrow\mathcal{I}_{Z}(\sigma)$; that is, $\mathcal{O}_{\Sigma_{n}}\hookrightarrow\mathcal{I}_{Z}$, but this is possible if and only if $Z=\varnothing$ which is a contradiction, and therefore $d<0$. We have
    \begin{align*}
        \mu_{H}(\mathcal{O}_{\Sigma_{n}}(J)) = \beta-\alpha n+d\alpha = 1+d\alpha < 0 < \mu_{H}(E).
    \end{align*}
    Now, suppose that $c\leq 0$, and in this case, $\mu_{H}(\mathcal{O}_{\Sigma_{n}}(J))= c+d\alpha$.

    If $c<0$, since $d\alpha\leq 0$, then $\mu_{H}(\mathcal{O}_{\Sigma_{n}}(J))<0=\mu_{H}(E)$. Then assume that $c=0$. If $d=0$, then $J=0$ and $\mathcal{O}_{\Sigma_{n}}\hookrightarrow\mathcal{I}_{Z}(\sigma)$; that is, $\mathcal{O}_{\Sigma_{n}}(-\sigma)\hookrightarrow\mathcal{I}_{Z}$, but by Remark \ref{Observation 2.18} this is only possible if and only if there exists $D\in|\sigma|$ with $Z\subset D$, but $\text{dim}\hspace{0.1em}|\sigma|=h^{0}(\mathcal{O}_{\Sigma_{n}}(\sigma))-1=0$ for $n>0$; then $D=\sigma$, but $Z\not\subset\sigma$, and therefore $d\neq 0$. Thus, $d<0$ and in this case,
    \begin{align*}
        \mu_{H}(\mathcal{O}_{\Sigma_{n}}(J))=d\alpha<0=\mu_{H}(E).
    \end{align*}

    Then $E$ is $H$-stable, and therefore $S_{H}(E)>0$. Since $c_{1}=0$, then $\text{deg}_{H}(E)=0$ and $S_{H}(E)$ is even, furthermore, it follows that
    \begin{align*}
        \text{deg}_{H}(E)-2\text{deg}_{H}(\mathcal{O}_{\Sigma_{n}}(L)) = -2\text{deg}_{H}(\mathcal{O}_{\Sigma_{n}}(L)) = -2(-\sigma).(\alpha\sigma+(\alpha n+1)f) = 2,
    \end{align*}
    which is the smallest positive even integer, that is, $S_{H}(E)=2$, and therefore, $\mathcal{O}_{\Sigma_{n}}(-\sigma)$ is a maximal line subbundle of $E$.

    Let us compute the global sections of $E$. Since $h^{i}(\mathcal{O}_{\Sigma_{n}}(-\sigma))=0$ for $i\in\{0,1\}$ then, from the exact sequence in cohomology induced by \eqref{Teorema 3.1suc} we have that $h^{0}(E)=h^{0}(\mathcal{I}_{Z}(\sigma))$, but $|Z|=c_{2}-n\geq 1$ with $Z$ general, and for $n>0$ we have $h^{0}(\mathcal{O}_{\Sigma_{n}}(\sigma))=1$, then by Proposition \ref{Proposition 1.27}, we have $h^{0}(\mathcal{I}_{Z}(\sigma))=0$; that is, $E$ does not have global sections.

    \item Assume that $H\equiv \sigma+\beta f$ and $c_{2}\geq 2$. Let $Z\subset\Sigma_{n}$ be a general $0$-dimensional subscheme, and $L=-f$ and $M=f$ be line bundles on $\Sigma_{n}$. Then by CB property there exists a non-trivial extension of the form 
    \begin{align}
        0 \longrightarrow \mathcal{O}_{\Sigma_{n}}(-f)\longrightarrow E\longrightarrow \mathcal{I}_{Z}(f)\longrightarrow 0, \label{Teorema 3.2.suc}
    \end{align}
    where $E$ is locally free and has Chern classes $c_{1}=0$ and $c_{2}=|Z|$.
    
    The proof that the vector bundle $E$ is $H$-stable, has Segre invariant $S_{H}(E)=2$, and has no global sections is analogous to that of case (1), therefore, we omit the details.

    \item Assume that $H=H_{t}\equiv \alpha\sigma+\beta_{t}f$, with $\beta_{t}=\alpha n+t$, where $0<t\leq\alpha$ and $c_{2}>0$. Let $Z\subset\Sigma_{n}$ be a general $0$-dimensional subscheme, and $L=0$ and $M=\sigma$ be line bundles on $\Sigma_{n}$. Then by CB property there exists a non-trivial extension of the form 
    \begin{align}
        0 \longrightarrow \mathcal{O}_{\Sigma_{n}}\longrightarrow E\longrightarrow \mathcal{I}_{Z}(\sigma) \longrightarrow 0, \label{Teorema 3.3.suc}
    \end{align}
    where $E$ is locally free and has Chern classes $c_{1}=\sigma$ and $c_{2}=|Z|$.

    Now, let us prove that the vector bundle $E$ is $H$-stable. Assume $\mathcal{O}_{\Sigma_{n}}(J)\hookrightarrow\mathcal{I}_{Z}(\sigma)$, where $J=c\sigma+df$. In this case $\sigma-J=(1-c)\sigma-df$ is an effective divisor, and therefore $c\leq 1$ and $d\leq 0$. Suppose that $c=1$ and $d=0$; that is $J=\sigma$  and since $h^{0}(\mathcal{O}_{\Sigma_{n}}(-\sigma))=0$ and $h^{0}(\mathcal{I}_{Z})=0$, if we tensor \eqref{Teorema 3.3.suc} by $\mathcal{O}_{\Sigma_{n}}(-\sigma)$, we get $1\leq h^{0}(E(-\sigma))\leq 0$, which is impossible, and therefore, $J\neq\sigma$. From the above, if $c=1$, then $d<0$, and if $c<1$, then $d\leq 0$.

    Assume that $c=1$. Then
    \begin{align*}
        \mu_{H}(\mathcal{O}_{\Sigma_{n}}(J)) = c(\beta-\alpha n)+d \alpha = t+d\alpha \leq 0 < {t\over 2} = \mu_{H}(E).
    \end{align*}

    Assume that $c<1$. Then 
    \begin{align*}
        \mu_{H}(\mathcal{O}_{\Sigma_{n}}(J)) = c(\beta-\alpha n)+d \alpha \leq c(\beta-\alpha n) = ct \leq 0 < {t\over 2} = \mu_{H}(E).
    \end{align*}
    Therefore, in any case, $\mu_{H}(\mathcal{O}_{\Sigma_{n}}(J))<\mu_{H}(E)$. Then $E$ is $H$-stable, and therefore $S_{H}(E)>0$. Suppose $\mathcal{O}_{\Sigma_{n}}(G)\hookrightarrow E$ is a maximal line subbundle of $E$. Then
    \begin{align*}
        0\leq\hbox{deg}_{H}(\mathcal{O}_{\Sigma_{n}}(G))<{t\over 2}
    \end{align*}
    Note that $\hbox{deg}_{H}(E)-2\hbox{deg}_{H}(\mathcal{O}_{\Sigma_{n}}(L))=t$, and if $S_{H}(E)<t$, then $\hbox{deg}_{H}(\mathcal{O}_{\Sigma_{n}}(G))>0$. Suppose that, indeed, $0<\hbox{deg}_{H}(\mathcal{O}_{\Sigma_{n}}(G))<{t\over 2}$. Tensoring the exact sequence \eqref{Teorema 3.3.suc} by $\mathcal{O}_{\Sigma_{n}}(-G)$ and we obtain:
    \begin{align}
        0 \longrightarrow \mathcal{O}_{\Sigma_{n}}(-G)\longrightarrow E(-G)\longrightarrow \mathcal{I}_{Z}(\sigma-G) \longrightarrow 0, \label{Example 6}
    \end{align}
    If $\sigma-G$ is an effective divisor with $G=a\sigma+bf$ then $a\leq 1$ and $b\leq 0$.

    Assume that $a=1$. Since $\sigma-G$ is effective, then $(\sigma-G).H = (-bf).(\alpha\sigma+\beta_{t} f) = -\alpha b > 0$, therefore $b<0$, since $G\neq L$. Furthermore, in this case we have that $\text{deg}_{H}(\mathcal{O}_{\Sigma_{n}}(G)) = (\beta-\alpha n)+b\alpha = t+b\alpha \leq 0$, which is impossible, and therefore, $a\neq 1$; that is, $a\leq 0$.

    With the above, we have that $0 < \hbox{deg}_{H}(\mathcal{O}_{\Sigma_{n}}(G)) = at+b\alpha < 0$, which is impossible, and therefore, $\sigma-G=(1-a)\sigma-bf$ cannot be effective, and then $a>1$ or $b>0$.

    If we assume that $a>1$ and $b>0$, then
    \begin{align*}
        \mu_{H}(\mathcal{O}_{\Sigma_{n}}(G)) = at+b(\alpha) > t > {t\over 2} = \mu_{H}(E),
    \end{align*}
    which contradicts the $H$-stability of $E$. So either $a>1$ or $b>0$, but not both.

    Recall that, since $\mathcal{O}_{\Sigma_{n}}(G)$ is a maximal line subbundle of $E$, then there also exists an exact sequence of the form
    \begin{align*}
        0 \longrightarrow \mathcal{O}_{\Sigma_{n}}(G)\longrightarrow E\longrightarrow \mathcal{I}_{Z'}(\sigma-G) \longrightarrow 0.
    \end{align*}
    by \cite[Proposition 2.5]{Friedman} where $Z'\subset\Sigma_{n}$ is a $0$-dimensional subscheme of length $|Z'|=c_{2}-G.(\sigma-G)$. Based on the above and according to the values of $a$ and $b$, the following cases are obtained:

    {\bf Case (a)} Assume that $a>1$, whence $b\leq 0$. If $b=0$, then $\text{deg}_{H}(\mathcal{O}_{\Sigma_{n}}(G))=at>{t\over 2}$, which is impossible. Then $b<0$ and therefore $G=a\sigma+bf$ is non-effective. Thus, since $G$ and $\sigma-G$ are not effective, then $h^{0}(E)=0$.

    {\bf Case (b)} Assume that $b>0$, whence $a\leq 1$. If $a\in\{0,1\}$ we have
    \begin{align*}
        \mu_{H}(\mathcal{O}_{\Sigma_{n}}(G)) = at+b\alpha \geq b\alpha \geq t > {t\over 2} = \mu_{H}(E),
    \end{align*}
    which is impossible. Then $a<0$ and therefore $G=a\sigma+bf$ is non-effective. Thus, since $G$ and $\sigma-G$ are not effective, then $h^{0}(E)=0$.

    Then it follows that if $E$ is a rank $2$ vector bundle on $\Sigma_{n}$ with $c_{1}(E)=\sigma$ such that a maximal line subbundle $\mathcal{O}_{\Sigma_{n}}(G)$ is such that $0<\text{deg}_{H}(\mathcal{O}_{\Sigma_{n}}(G))<{t\over 2}$ then $h^{0}(E)=0$. But from the original sequence \eqref{Teorema 3.3.suc} we have that $h^{0}(E)\geq 1$, and so $\mathcal{O}_{\Sigma_{n}}(G)$ cannot exist. Then $\mathcal{O}_{\Sigma_{n}}(L)=\mathcal{O}_{\Sigma_{n}}$ is a maximal line subbundle of $E$ and $S_{H}(E)=t$.

    Let us compute the global sections of $E$. Since $h^{0}(\mathcal{O}_{\Sigma_{n}})=1$ and $h^{1}(\mathcal{O}_{\Sigma_{n}})=0$ then, from the exact sequence in cohomology induced by \eqref{Teorema 3.3.suc} we have that $h^{0}(E)=1+h^{0}(\mathcal{I}_{Z}(\sigma))$.

    Suppose that $n\geq 1$. Since $|Z|=c_{2}\geq 1=h^{0}(\mathcal{O}_{\Sigma_{n}}(\sigma))$ with $Z$ general, then $h^{0}(\mathcal{I}_{Z}(\sigma))=0$ by Proposition \ref{Proposition 1.27}, so $h^{0}(E)=1$ in this case.

    Now, suppose that $n=0$. If $|Z|=c_{2}\geq 2=h^{0}(\mathcal{O}_{\Sigma_{0}}(\sigma))$ with $Z$ general, then $h^{0}(\mathcal{I}_{Z}(\sigma))=0$, so $h^{0}(E)=1$. Assume that $c_{2}=1$; that is, $Z=\{p\}$, and note that in the exact sequence in cohomology induced by
    \begin{align*}
        0 \longrightarrow \mathcal{I}_{Z}(\sigma)\longrightarrow \mathcal{O}_{\Sigma_{0}}(\sigma)\longrightarrow \mathcal{O}_{Z}(\sigma)\longrightarrow 0,
    \end{align*}
    all cohomology groups starting from $H^{1}(\mathcal{O}_{\Sigma_{0}}(\sigma))$ are zero, $h^{0}(\mathcal{O}_{\Sigma_{0}}(\sigma))=2$ and $h^{0}(\mathcal{O}_{Z}(\sigma))=1$ by Lemma \ref{Lemma 1.23}. Now, since $Z=\{p\}$, the morphism $H^{0}(\mathcal{O}_{\Sigma_{0}}(\sigma)) \rightarrow H^{0}(\mathcal{O}_{Z}(\sigma))$ is just the evaluation morphism at the point $p$, let us call it $\mathrm{ev}_{p}$; that is
    \[
    \begin{aligned}
    \mathrm{ev}_p \colon\; 
    H^{0}\big(\mathcal{O}_{\Sigma_{0}}(\sigma)\big) &\longrightarrow H^{0}\big(\mathcal{O}_{Z}(\sigma)\big) \cong \mathbb{C} \\
    s &\longmapsto s(p).
    \end{aligned}
    \]
    As the linear system $|\sigma|$ has no base points if $n=0$, then for any point of $\Sigma_{0}$ there exists a global section of $\mathcal{O}_{\Sigma_{0}}(\sigma)$ which comes from an element of $|\sigma|$ that does not vanish at $p$. Hence, $\mathrm{ev}_p$ is surjective, yielding $\text{dim}(\text{Ker}(\mathrm{ev}_p))=1$. Therefore, $h^{0}(\mathcal{I}_{Z}(\sigma))=1$, and then $h^{0}(E)=2$ in this case.

    Finally, we can summarize everything as follows:
    \[
    h^{0}(E) = 
    \begin{cases}
    2 & \text{if } n=0 \text{ and } c_{2}=1, \\
    1 & \text{if } n=0 \text{ and } c_{2}\geq 2, \\
    1 & \text{if } n\geq 1. \\
    \end{cases}
    \]

    \item First assume that $H\equiv \alpha\sigma+\beta f$, with $\alpha>1$, $\beta>\alpha n+(\alpha-1)$ and $c_{2}\geq 1$. Let $Z\subset\Sigma_{n}$ be a general $0$-dimensional subscheme, and let $L=0$ and $M=f$ be line bundles on $\Sigma_{n}$. Then by CB property there exists a non-trivial extension of the form
    \begin{align}
        0 \longrightarrow \mathcal{O}_{\Sigma_{n}}\longrightarrow E\longrightarrow \mathcal{I}_{Z}(f) \longrightarrow 0, \label{Teorema 3.4suc}
    \end{align}
    where $E$ is locally free and has Chern classes $c_{1}=f$ and $c_{2}=|Z|$.

    Now, let us prove that the vector bundle $E$ is $H$-stable. Assume $\mathcal{O}_{\Sigma_{n}}(J)\hookrightarrow\mathcal{I}_{Z}(f)$, where $J=c\sigma+df$. In this case $ f-J=-c\sigma+(1-d)f$ is an effective divisor, and therefore $c\leq 0$ and $d\leq 1$. Suppose that $c=0$ and $d=1$; that is $J=f$ and since $h^{0}(\mathcal{O}_{\Sigma_{n}}(-f))=0$ and $h^{0}(\mathcal{I}_{Z})=0$, if we tensor \eqref{Teorema 3.4suc} by $\mathcal{O}_{\Sigma_{n}}(-J)$, we get $1\leq h^{0}(E(-f))\leq 0$, which is impossible, and therefore, $J\neq f$. From the above, if $c=0$, then $d<1$, and if $c<0$, then $d\leq 1$.

    Assume $c=0$. Then
    \begin{align*}
        \mu_{H}(\mathcal{O}_{\Sigma_{n}}(J)) = c(\beta-\alpha n)+\alpha d = \alpha d \leq 0 < {\alpha\over 2} = \mu_{H}(E).
    \end{align*}
    Assume $c<0$. Then
    \begin{align*}
        \mu_{H}(\mathcal{O}_{\Sigma_{n}}(J)) = c(\beta-\alpha n)+\alpha d \leq -(\beta-\alpha n)+\alpha < (1-\alpha)+\alpha = 1 \leq {\alpha\over 2} = \mu_{H}(E).
    \end{align*}
    Then $E$ is $H$-stable, and therefore $S_{H}(E)>0$. Suppose $\mathcal{O}_{\Sigma_{n}}(G)\hookrightarrow E$ is a maximal line subbundle of $E$. Then
    \begin{align*}
        0\leq\hbox{deg}_{H}(\mathcal{O}_{\Sigma_{n}}(G))<{\alpha\over 2}.
    \end{align*}
    Note that $\hbox{deg}_{H}(E)-2\hbox{deg}_{H}(\mathcal{O}_{\Sigma_{n}}(L)) = \alpha$, and if $S_{H}(E)<\alpha$, then $\hbox{deg}_{H}(\mathcal{O}_{\Sigma_{n}}(G))>0$. Suppose that, indeed, $0<\hbox{deg}_{H}(\mathcal{O}_{\Sigma_{n}}(G))<{\alpha\over 2}$. Tensoring the exact sequence \eqref{Teorema 3.4suc} by $\mathcal{O}_{\Sigma_{n}}(-G)$ and we obtain:
    \begin{align}
        0 \longrightarrow \mathcal{O}_{\Sigma_{n}}(-G)\longrightarrow E(-G)\longrightarrow \mathcal{I}_{Z}(f-G) \longrightarrow 0, \label{Example 4}
    \end{align}
    If $f-G$ is an effective divisor with $G=a\sigma+bf$ then $a\leq 0$ and $b\leq 1$.

    Assume that $b=1$. Since $f-G$ is effective, then $(f-G).H = a(\alpha n-\beta)+(1-b)(\alpha) = a(\alpha n-\beta) > 0$, therefore $a<0$, since $\alpha n-\beta<0$ and $G\neq L$. Furthermore, in this case we have that $\text{deg}_{H}(\mathcal{O}_{\Sigma_{n}}(G)) = a(\beta-\alpha n)+\alpha$. Since $\beta>\alpha n+(\alpha-1)$, then $\beta-\alpha n\geq\alpha$, and therefore, since $a<0$, then $a(\beta-\alpha n)+\alpha\leq 0$; that is, $\deg_{H}(\mathcal{O}_{\Sigma_{n}}(G))\leq 0$, which is impossible, and therefore, $b\neq 1$; that is, $b\leq 0$.

    With the above, we have that $0 < \hbox{deg}_{H}(\mathcal{O}_{\Sigma_{n}}(G)) = a(\beta-\alpha n)+b\alpha \leq \alpha b \leq 0$, which is impossible, and therefore, $f-G=-a\sigma+(1-b)f$ cannot be effective, and then $a>0$ or $b>1$. If we assume that $a>0$ and $b>1$, then
    \begin{align*}
        \mu_{H}(\mathcal{O}_{\Sigma_{n}}(G)) = a(\beta-\alpha n)+b(\alpha) > \alpha > {\alpha\over 2} = \mu_{H}(E),
    \end{align*}
    which contradicts the $H$-stability of $E$. So either $a>0$ or $b>1$, but not both.

    Recall that, since $\mathcal{O}_{\Sigma_{n}}(G)$ is a maximal line subbundle of $E$, then there also exists an exact sequence of the form
    \begin{align*}
        0 \longrightarrow \mathcal{O}_{\Sigma_{n}}(G)\longrightarrow E\longrightarrow \mathcal{I}_{Z'}(f-G) \longrightarrow 0,
    \end{align*}
    where $Z'\subset\Sigma_{n}$ is a $0$-dimensional subscheme of length $|Z|=c_{2}-G.(f-G)$. Based on the above and according to the values of $a$ and $b$, the following cases are obtained:

    {\bf Case (a)} Assume that $a>0$, whence $b\leq 1$. Since $\beta-\alpha n\geq\alpha$ and $a>0$, then if $b\in\{0,1\}$ we have $\text{deg}_{H}(\mathcal{O}_{\Sigma_{n}}(G))=a(\beta-\alpha n)+b\alpha\geq\alpha$, which is impossible. Then $b<0$ and therefore $G=a\sigma+bf$ is non-effective. Thus, since $G$ and $f-G$ are not effective, then $h^{0}(E)=0$.

    {\bf Case (b)} Assume that $b>1$, whence $a\leq 0$. If $a=0$, then $\text{deg}_{H}(\mathcal{O}_{\Sigma_{n}}(G))=b\alpha>\alpha$, which is impossible. Then $a<0$ and therefore $G=a\sigma+bf$ is non-effective. Thus, since $G$ and $f-G$ are not effective, then $h^{0}(E)=0$.

    Then it follows that if $E$ is a rank-2 vector bundle on $\Sigma_{n}$ with $c_{1}(E)=f$ such that its maximal line subbundle $\mathcal{O}_{\Sigma_{n}}(G)$ is such that $0<\text{deg}_{H}(\mathcal{O}_{\Sigma_{n}}(G))<{\alpha\over 2}$ then $h^{0}(E)=0$. But from the original sequence \eqref{Teorema 3.4suc} we have that $h^{0}(E)\geq 1$, then $\mathcal{O}_{\Sigma_{n}}(G)$ cannot exist. Then $\mathcal{O}_{\Sigma_{n}}(L)=\mathcal{O}_{\Sigma_{n}}$ is a maximal line subbundle of $E$, and $S_{H}(E)=\alpha$.

    The proof when $a=1$; that is, $H\equiv\sigma+\beta f$ is analogous, but considerably simpler, therefore, we omit the details.

    Let us compute the global sections of the vector bundle $E$. Since $h^{0}(\mathcal{O}_{\Sigma_{n}})=1$ and $h^{1}(\mathcal{O}_{\Sigma_{n}})=0$ then, from the exact sequence in cohomology induced by \eqref{Teorema 3.4suc}, we have that $h^{0}(E)=1+h^{0}(\mathcal{I}_{Z}(f))$.

    Recall that if $|Z|=c_{2}\geq 2=h^{0}(\mathcal{O}_{\Sigma_{n}}(f))$ with $Z$ general, then $h^{0}(\mathcal{I}_{Z}(f))=0$ by Proposition \ref{Proposition 1.27}, so $h^{0}(E)=1$ in this case.

    Now, assume that $c_{2}=1$; that is, $Z=\{p\}$, and note that in the exact sequence in cohomology induced by
    \begin{align*}
        0 \longrightarrow \mathcal{I}_{p}(f)\longrightarrow \mathcal{O}_{\Sigma_{n}}(f)\longrightarrow \mathcal{O}_{p}(f)\longrightarrow 0,
    \end{align*}
    we have $h^{0}(\mathcal{O}_{\Sigma_{n}}(f))=2$, $h^{0}(\mathcal{O}_{p}(f))=1$ and $h^{1}(\mathcal{O}_{\Sigma_{n}}(f))=0$. Therefore $h^{0}(\mathcal{I}_{p}(f))=1$, and then in this case, $h^{0}(E)=2$.

    In summary, we have the following:
    \[
    h^{0}(E) = 
    \begin{cases}
    2 & \text{if } c_{2}=1, \\
    1 & \text{if } c_{2}\geq 2. \\
    \end{cases}
    \]

    \item Assume that $H=H_{r}\equiv \alpha\sigma+\beta_{r}f$, with $\beta_{r}=\alpha(n+1)+r$, where $0<r\leq\alpha$ and $c_{2}\geq 2$. Let $Z\subset\Sigma_{n}$ be a general $0$-dimensional subscheme, and let $L=f$ and $M=\sigma$ be line bundles on $\Sigma_{n}$. Then by CB property there exists a non-trivial extension of the form
    \begin{align}
        0 \longrightarrow \mathcal{O}_{\Sigma_{n}}(f)\longrightarrow E\longrightarrow \mathcal{I}_{Z}(\sigma) \longrightarrow 0, \label{Teorema 3.6suc}
    \end{align}
    where $E$ is locally free and has Chern classes $c_{1}=\sigma+f$ and $c_{2}=1+|Z|$.
    
    The proof that the vector bundle $E$ is $H_{r}$-stable and has Segre invariant $S_{H_{r}}(E)=r$ is completely analogous, in general idea and in the steps followed, to that of Case (3), therefore, we omit the details.

    Let us compute the global sections of $E$. Since $h^{0}(\mathcal{O}_{\Sigma_{n}}(f))=2$ and $h^{1}(\mathcal{O}_{\Sigma_{n}}(f))=0$ then, from the exact sequence in cohomology induced by \eqref{Teorema 3.6suc} we have that $h^{0}(E) = h^{0}(\mathcal{O}_{\Sigma_{n}}(f))+h^{0}(\mathcal{I}_{Z}(\sigma)) = 2+h^{0}(\mathcal{I}_{Z}(\sigma)).$

    For $n=0$, we have $h^{0}(\mathcal{O}_{\Sigma_{0}}(\sigma))=2$, and since $|Z|=c_{2}-1$, if $c_{2}\geq 3$ with $Z$ general, then $|Z|\geq h^{0}(\mathcal{O}_{\Sigma_{0}}(\sigma))$, and so $h^{0}(\mathcal{I}_{Z}(\sigma))=0$ by Proposition \ref{Proposition 1.27}. Now, if $c_{2}=2$, then $Z=\{p\}$, and note that we have the same conditions as Case (3). Analogously, it can be shown that and we have $h^{0}(\mathcal{I}_{p}(\sigma))=1$.

    For $n\geq 1$, we have $h^{0}(\mathcal{O}_{\Sigma_{n}}(\sigma))=1$, and since $|Z|=c_{2}-1$ with $c_{2}\geq 2$ and $Z$ general, then $|Z|\geq h^{0}(\mathcal{O}_{\Sigma_{n}}(\sigma))$, then $h^{0}(\mathcal{I}_{Z}(\sigma))=0$ again by Proposition \ref{Proposition 1.27}.

    In summary we have the following:
    \[
    h^{0}(E) = 
    \begin{cases}
    3 & \text{if } n=0 \text{ and } c_{2}=2, \\
    2 & \text{if } n=0 \text{ and } c_{2}\geq 3, \\
    2 & \text{if } n\geq 1.
    \end{cases}
    \]

    \item Assume that $H=H_{r}\equiv \alpha\sigma+\beta_{r}f$, with $\beta_{r}=\alpha(n+1)+r$, where $0<r\leq\alpha$ and $c_{2}\geq 5$. Let $Z\subset\Sigma_{n}$ be a general $0$-dimensional subscheme, and let $L=0$ and $M=\sigma+f$ be line bundles on $\Sigma_{n}$. Then by CB property there exists a non-trivial extension of the form
    \begin{align}
        0\longrightarrow \mathcal{O}_{\Sigma_{n}}\longrightarrow E\longrightarrow \mathcal{I}_{Z}(\sigma+f)\longrightarrow 0, \label{Teorema 3.7suc}
\end{align}
    where $E$ is locally free and has Chern classes $c_{1}=\sigma+f$ and $c_{2}=|Z|$.

    Now, let us prove that the vector bundle $E$ is $H_{r}$-stable. Assume $\mathcal{O}_{\Sigma_{n}}(J)\hookrightarrow\mathcal{I}_{Z}(\sigma+f)$ where $J=c\sigma+df$. Then $\sigma+f-J=(1-c)\sigma+(1-d)f$ is an effective divisor, and therefore $c\leq 1$ and $d\leq 1$.

    Suppose that $c=1$ and $d=1$, so $J=\sigma+f$. Then $\mathcal{O}_{\Sigma_{n}}(\sigma+f)\hookrightarrow\mathcal{I}_{Z}(\sigma+f)$, that is $\mathcal{O}_{\Sigma_{n}}\hookrightarrow\mathcal{I}_{Z}$, but this is possible if and only if $Z=\varnothing$, which is a contradiction, and therefore, $c\neq 1$ or $d\neq 1$.

    Suppose that $d\neq 1$; that is $d\leq 0$, and then $c\leq 1$. If $c=1$ we have that $J=\sigma+df$, and since $\mathcal{O}_{\Sigma_{n}}(J)\hookrightarrow\mathcal{I}_{Z}(\sigma+f)$, then $\mathcal{O}_{\Sigma_{n}}\hookrightarrow\mathcal{I}_{Z}([1-d]f)$, where $1-d\geq 1$. But, by Remark \ref{Observation 2.18}$, \mathcal{O}_{\Sigma_{n}}\hookrightarrow\mathcal{I}_{Z}([1-d]f)$ exists if and only if there is $D\in|(1-d)f|$ such that $Z\subset D$, but since $Z$ is general, then the above cannot happen, and therefore, $c\neq 1$; that is, $c\leq 0$. Similarly, it can be verified that if we assume that $c\neq 1$, then $d\neq 1$. Thus, $c,d\leq 0$, and then 
    \begin{align*}
        \mu_{H_{r}}(\mathcal{O}_{\Sigma_{n}}(J) = c(\alpha+r)+d(\alpha) \leq 0 < {2\alpha+r\over 2} = \mu_{H_{r}}(E),
    \end{align*}
    and we obtain that $E$ is $H_{r}$-stable, and so $S_{H_{r}}(E)>0$.

    In this case, it is convenient to first compute the global sections of $E$ before its Segre invariant. Since $h^{0}(\mathcal{O}_{\Sigma_{n}})=1$ and $h^{1}(\mathcal{O}_{\Sigma_{n}})=0$, then $h^{0}(E)=1+h^{0}(\mathcal{I}_{Z}(\sigma+f))$, but $|Z|\geq 5>h^{0}(\mathcal{O}_{\Sigma_{n}}(\sigma+f))$ with $Z$ general, therefore $h^{0}(\mathcal{I}_{Z}(\sigma+f))=0$ by Proposition \ref{Proposition 1.27}, and so $h^{0}(E)=1$.

    Now, for the Segre invariant, suppose $\mathcal{O}_{\Sigma_{n}}(G)\hookrightarrow E$ is a maximal line subbundle of $E$. Then
    \begin{align*}
        0\leq\hbox{deg}_{H_{r}}(\mathcal{O}_{\Sigma_{n}}(G))<{2\alpha+r\over 2}.
    \end{align*}
    Note that $\hbox{deg}_{H_{r}}(E)-2\hbox{deg}_{H_{r}}(\mathcal{O}_{\Sigma_{n}}(L)) = 2\alpha+r$, and if $S_{H_{r}}(E)<2\alpha+r$, then $\hbox{deg}_{H_{r}}(\mathcal{O}_{\Sigma_{n}}(G))>0$. Suppose that, indeed, $0<\hbox{deg}_{H_{r}}(\mathcal{O}_{\Sigma_{n}}(G))<{2\alpha+r\over 2}$. Tensoring the exact sequence \eqref{Teorema 3.7suc} by $\mathcal{O}_{\Sigma_{n}}(-G)$ and we obtain
    \begin{align}
        0 \longrightarrow \mathcal{O}_{\Sigma_{n}}(-G)\longrightarrow E(-G)\longrightarrow \mathcal{I}_{Z}(\sigma+f-G) \longrightarrow 0, \label{Example 2.30.2}
    \end{align}
    Suppose $\sigma+f-G$ is an effective divisor with $G=a\sigma+bf$, and then $a\leq 1$ and $b\leq 1$. Since $H_{r}$ is ample and $\sigma+f-G$ is effective, then $a\neq 1$ or $b\neq 1$ because
    \begin{align*}
        (\sigma+f-G).H_{r} = ([1-a]\sigma+[1-b]f).(\alpha\sigma+\beta_{r}f) = (1-a)(\alpha+r)+(1-b)(\alpha) > 0.
    \end{align*}

    Suppose that $b=1$. If $a=0$, then $G=f$ and since $\mathcal{O}_{\Sigma_{n}}(G)\subset E$ is maximal then there exists a short exact sequence of type
    \begin{align}
        0 \longrightarrow \mathcal{O}_{\Sigma_{n}}(f)\longrightarrow E\longrightarrow \mathcal{I}_{Z'}(\sigma)\longrightarrow 0, \label{Example 2.30.3}
    \end{align}
    with $Z'$ a $0$-dimensional subscheme of length $|Z'|=c_{2}-1$ and therefore $Z'\neq Z$. From this sequence we have that $h^{0}(E)\geq h^{0}(\mathcal{O}_{\Sigma_{n}}(f))=2$, which is a contradiction, and therefore $a<0$. Then we have $\mu_{H_{r}}(\mathcal{O}_{\Sigma_{n}}(G))=\alpha(a+1)+ra<0$, which is impossible and so $b\neq 1$.

    Now, suppose that $a=1$. If $b=0$, then $G=\sigma$ and we have
    \begin{align*}
        \mu_{H_{r}}(\mathcal{O}_{\Sigma_{n}}(G)) = (\sigma).(\alpha\sigma+\beta_{r}f) = \alpha+r > {2\alpha+r\over 2} = \mu_{H_{r}}(E),
    \end{align*}
    which contradicts the $H_{r}$-stability of $E$, and therefore $b<0$.

    Suppose $b=-1$, so $G=\sigma-f$. Then $E$ fits into a short exact
    sequence of the form
    \begin{align*}
    0 \longrightarrow \mathcal{O}_{\Sigma_{n}}(\sigma-f)\longrightarrow E\longrightarrow \mathcal{I}_{Z'}(2f)\longrightarrow 0,
    \end{align*}
    with $Z'$ a general $0$-dimensional subscheme with $Z'\neq Z$ and $|Z'|=c_{2}-2$. Since $|Z'|\geq 3$ with $Z'$ general, and $h^{0}(\mathcal{O}_{\Sigma_{n}}(2f))=3$, then $h^{0}(\mathcal{I}_{Z'}(2f))=0$ again by Proposition \ref{Proposition 1.27}. Furthermore, as $h^{0}(\mathcal{O}_{\Sigma_{n}}(\sigma-f))=0$, then $h^{0}(E)=0$, but from the original sequence \ref{Teorema 3.7suc} we have that $h^{0}(E)\geq 1$, which is a contradiction and therefore, $b<-1$. It follows that
    \begin{align*}
        \deg_{H_{r}}(\mathcal{O}_{\Sigma_{n}}(G)) = \alpha(b+1)+r \leq r-\alpha \leq 0, 
    \end{align*}
    which is a contradiction, and therefore $a<1$. Thus, in general, it must be $a\leq 0$ and $b\leq 0$, but since $G\neq 0$, then necessarily $a<0$ or $b<0$. However, this implies that $\text{deg}_{H_{r}}(\mathcal{O}_{\Sigma_{n}}(G))<0$ and therefore $\mathcal{O}_{\Sigma_{n}}(G)$ is not maximal, and so $\mathcal{O}_{\Sigma_{n}}$ is a maximal line subbundle of $E$; moreover, for this reason we have that $S_{H_{r}}(E)=2\alpha+r$ with $0<r\leq\alpha$. 
\end{enumerate}
\end{proof}

\begin{remark}
Note that in each case of Theorem \ref{Theorem 3.4}, the Segre invariant, which arises from the line subbundles involved in the extensions for which the vector bundles $E$ are given, imposes restrictions on the number of global sections of each vector bundle. In addition to this, the following are some important observations:
\begin{itemize}[leftmargin=*]
    \item Consider the conditions of case (3) of Theorem \ref{Theorem 3.4} in Theorem \ref{Theorem 1.2 Costa}. We have
    \begin{align*}
        1\leq k<{1\over2\alpha}[\beta-\alpha(e-m+2g-2)]=1+{t\over 2\alpha}+{m\over 2},
    \end{align*}
    and since $t>0$ it always follows that
    \begin{align*}
        1<1+{t\over 2\alpha}+{m\over 2}.
    \end{align*}
    Thus, for our case $k=1$ is the only option. Therefore, Theorem \ref{Theorem 1.2 Costa} implies that for $c_{2}>>0$ then $W^{1}_{H_{t}}(2;\sigma,c_{2})\neq\varnothing$ whenever $M_{H_{t}}(2;\sigma,c_{2})\neq\varnothing$; however, the case (3) of Theorem \ref{Theorem 3.4} ensures that $W^{2}_{H_{t}}(2;\sigma,1)\neq\varnothing$ on $\Sigma_{0}$, or even without imposing extra conditions on $c_{2}$ (unless it is positive), the Brill-Noether locus $W^{1}_{H_{t}}(2;\sigma,c_{2})\neq\varnothing$ on $\Sigma_{n}$ for $n\geq 1$.

    \item Theorem \cite[Theorem 4.8]{Costa3} shows that the varieties $W^{1}_{H}(2;f,c_{2})$ are non-empty with $H\equiv \sigma+\beta f$; a result that inspired the case (5) of Theorem \ref{Theorem 3.4} focused on surfaces $\Sigma_{n}$; however, case (4) provides a generalization of the result of Costa and Macías-Tarrío on Hirzebruch surfaces for a larger collection of ample divisors.
\end{itemize}
\end{remark}

\section{Applications to Brill-Noether Theory}

The goal of this section is to determine the values of $k$ that guarantee that the Brill-Noether locus $W^{k}_{H}(2;c_{1},c_{2})$ is non-empty. Furthermore, we will show that these determinant variety are smooth and of the expected dimension $\rho^{k}_{H}$.

\begin{proposition}\label{Proposition 4.3.1}
Let $H\in\text{Pic}(\Sigma_{n})$ be an ample divisor and $c_{1}\in\text{Pic}(\Sigma_{n})$ effective. If $E\in M_{\Sigma_{n},H}(2;c_{1},c_{2})$, then $h^{2}(E)=0$.
\end{proposition}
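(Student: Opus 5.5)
By Serre duality, $h^{2}(E)=h^{0}(E^{*}\otimes\mathcal{O}_{\Sigma_{n}}(K_{\Sigma_{n}}))$. For a rank two bundle we have $E^{*}\cong E\otimes\mathcal{O}_{\Sigma_{n}}(-c_{1})$, hence
\begin{align*}
h^{2}(E)=h^{0}(E(K_{\Sigma_{n}}-c_{1})).
\end{align*}
So the plan is to show that the twisted bundle $F:=E(K_{\Sigma_{n}}-c_{1})$ has no nonzero global sections. We will do this by combining $H$-stability with a slope computation.

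\textbf{Step 1: stability of $F$.} By Remark \ref{Lemma 4.2}, tensoring with a line bundle preserves $H$-stability, so $F$ is $H$-stable. Its first Chern class is $c_{1}(F)=c_{1}+2(K_{\Sigma_{n}}-c_{1})=2K_{\Sigma_{n}}-c_{1}$. Therefore
\begin{align*}
\mu_{H}(F)=K_{\Sigma_{n}}.H-\tfrac{1}{2}c_{1}.H .
\end{align*}

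\textbf{Step 2: the sign of $\mu_{H}(F)$.} Write $H\equiv\alpha\sigma+\beta f$ with $\alpha>0$ and $\beta>\alpha n$. A direct computation gives
\begin{align*}
K_{\Sigma_{n}}.H=(-2\sigma-(n+2)f).(\alpha\sigma+\beta f)=2\alpha n-2\beta-\alpha(n+2)=-2(\beta-\alpha n)-\alpha n-2\alpha<0 .
\end{align*}
Since $c_{1}$ is effective and $H$ is ample, we also have $c_{1}.H\geq 0$. Together these give $\mu_{H}(F)<0$.

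\textbf{Step 3: conclusion.} Suppose $F$ had a nonzero section $s$. Then $s$ generates a rank one subsheaf $\mathcal{O}_{\Sigma_{n}}\to F$. Saturating it as in \cite[Proposition 2.5]{Friedman} produces a line subbundle $\mathcal{O}_{\Sigma_{n}}(D)\subset F$ with $D$ effective. Then
\begin{align*}
\mu_{H}(\mathcal{O}_{\Sigma_{n}}(D))=D.H\geq 0>\mu_{H}(F),
\end{align*}
which contradicts the $H$-stability of $F$. Even semistability would suffice here, because the inequality $\mu_{H}(F)<0$ is strict. Hence $h^{0}(F)=0$, and therefore $h^{2}(E)=0$.

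The argument has no real obstacle. The only points that need care are the identification $E^{*}\cong E\otimes\det E^{-1}$ for rank two bundles and the strict negativity $K_{\Sigma_{n}}.H<0$, which holds because $-K_{\Sigma_{n}}$ is effective and nonzero.
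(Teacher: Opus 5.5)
Your proof is correct and follows essentially the same route as the paper: Serre duality, $H$-stability of the twist $E^{*}\otimes\mathcal{O}_{\Sigma_{n}}(K_{\Sigma_{n}})\cong E(K_{\Sigma_{n}}-c_{1})$, the strict negativity of its slope from $K_{\Sigma_{n}}.H<0$ and $c_{1}.H\geq 0$, and a contradiction with a rank-one subsheaf of nonnegative slope generated by a nonzero section. The saturation step is harmless but not needed, since $\mathcal{O}_{\Sigma_{n}}\hookrightarrow F$ is already a rank-one subsheaf of slope $0>\mu_{H}(F)$.
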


\begin{proof}
Note that $\mu_{H}(E)\geq 0$ for every ample divisor $H$ on $\Sigma_{n}$ if $c_{1}\in\{0,\sigma,f,\sigma+f\}$. By Serre duality, $H^{2}(E)\cong H^{0}(E^{*}\otimes\mathcal{O}_{\Sigma_{n}}(K_{\Sigma_{n}}))$, where by Remark \ref{Lemma 4.2}, $E^{*}\otimes\mathcal{O}_{\Sigma_{n}}(K_{\Sigma_{n}})$ is $H$-stable. Furthermore, $ \mu_{H}(E^{*}\otimes\mathcal{O}_{\Sigma_{n}}(K_{\Sigma_{n}})) = -\mu_{H}(E)+\mu_{H}(\mathcal{O}_{\Sigma_{n}}(K_{\Sigma_{n}}))$, where $\mu_{H}(\mathcal{O}_{\Sigma_{n}}(K_{\Sigma_{n}}))<0$, so $\mu_{H}(\mathcal{O}_{\Sigma_{n}}(K_{\Sigma_{n}}))<\mu_{H}(E)$; that is, $\mu_{H}(E^{*}\otimes\mathcal{O}_{\Sigma_{n}}(K_{\Sigma_{n}}))<0$.

Suppose there exists $s\in H^{0}(E^{*}\otimes\mathcal{O}_{\Sigma_{n}}(K_{\Sigma_{n}}))$, which corresponds to a bundle homomorphism $\phi:\mathcal{O}_{\Sigma_{n}}\longrightarrow E^{*}\otimes\mathcal{O}_{\Sigma_{n}}(K_{\Sigma_{n}})$. The image $S:=\text{Im}(\phi)$ is a torsion-free subsheaf of $E^{*}\otimes\mathcal{O}_{\Sigma_{n}}(K_{\Sigma_{n}})$ of rank $1$, so $S\cong\mathcal{I}_{Z}(L)$ with $L$ a line divisor on $\Sigma_{n}$ and $\mathcal{I}_{Z}$ the ideal of a $0$-dimensional subscheme $Z\subset\Sigma_{n}$. Furthermore, $h^{0}(S)>0$ because the global section $s$ has image in $S$. Since there exists an inclusion $\mathcal{I}_{Z}(L)\hookrightarrow L$, if $h^{0}(\mathcal{I}_{Z}(L))>0$ then $h^{0}(L)>0$, so $L$ is effective, and then $\mu_{H}(S)=\mu_{H}(\mathcal{I}_{Z}(L))=L.H\geq 0$.

Now, since $E^{*}\otimes\mathcal{O}_{\Sigma_{n}}(K_{\Sigma_{n}})$ is $H$-stable and $S$ is a subsheaf of $E^{*}\otimes\mathcal{O}_{\Sigma_{n}}(K_{\Sigma_{n}})$ of rank $1$, then $0\leq\mu_{H}(S)<\mu_{H}(E^{*}\otimes\mathcal{O}_{\Sigma_{n}}(K_{\Sigma_{n}}))<0$, which is a contradiction. Therefore, $h^{0}(E^{*}\otimes\mathcal{O}_{\Sigma_{n}}(K_{\Sigma_{n}}))=0$, and so $h^{2}(E)=0$.
\end{proof}

\begin{proposition}\label{Proposition 4.3.2}
Let $c_{1}\in\{\sigma,f,\sigma+f\}$ and $D_{c_{1}}\in\text{Pic}(\Sigma_{n})$ which depends on $c_{1}$ as follows:
\begin{enumerate}
    \item For $c_{1}=\sigma$, $D_{c_{1}}=0$,
    \item For $c_{1}=f$, $D_{c_{1}}=0$,
    \item For $c_{1}=\sigma+f$, $D_{c_{1}}=f$ or $D_{c_{1}}=0$.
\end{enumerate}
Let $E\in M_{\Sigma_{n},H}(2;c_{1},c_{2})$. Then $h^{2}(E\otimes E^{*})=0$ if $E$ fits into a short exact sequence of type
\begin{align}
0 \longrightarrow \mathcal{O}_{\Sigma_{n}}(D_{c_{1}})\longrightarrow E\longrightarrow \mathcal{I}_{Z}(c_{1}-D_{c_{1}})\longrightarrow 0. \label{squence Proposition 4.3.2}
\end{align}
where $Z$ is any $0$-dimensional subscheme of length $|Z|=c_{2}-D_{c_{1}}.(c_{1}-D_{c_{1}})$.
\end{proposition}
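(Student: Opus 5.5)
Tensor the defining sequence by $E^{*}$ and use the long exact cohomology sequence. This gives
\[
0\longrightarrow E^{*}(D_{c_{1}})\longrightarrow E\otimes E^{*}\longrightarrow \mathcal{I}_{Z}\otimes E^{*}(c_{1}-D_{c_{1}})\longrightarrow 0,
\]
which is exact because $E^{*}$ is locally free. Since $H^{3}$ vanishes on a surface, it suffices to prove
\[
h^{2}(E^{*}(D_{c_{1}}))=0 \quad\text{and}\quad h^{2}(\mathcal{I}_{Z}\otimes E^{*}(c_{1}-D_{c_{1}}))=0 .
\]

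For the second group, consider $0\to \mathcal{I}_{Z}\otimes E^{*}(c_{1}-D_{c_{1}})\to E^{*}(c_{1}-D_{c_{1}})\to \mathcal{O}_{Z}\otimes E^{*}(c_{1}-D_{c_{1}})\to 0$. The last term is supported in dimension zero, so its $H^{1}$ vanishes. Hence the second group equals $H^{2}(E^{*}(c_{1}-D_{c_{1}}))$. Using $E^{*}\cong E(-c_{1})$ for rank two, both groups reduce to $H^{2}(E(-c_{1}+D_{c_{1}}))$ and $H^{2}(E(-D_{c_{1}}))$.

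By Serre duality these are $H^{0}(E^{*}(K+c_{1}-D_{c_{1}}))=H^{0}(E(K-D_{c_{1}}))$ and $H^{0}(E(K-c_{1}+D_{c_{1}}))$, where $K=K_{\Sigma_{n}}$. I would prove their vanishing exactly as in Proposition \ref{Proposition 4.3.1}. The bundles $E(K-D_{c_{1}})$ and $E(K-c_{1}+D_{c_{1}})$ are $H$-stable by Remark \ref{Lemma 4.2}. Their slopes are
\[
\mu_{H}(E)+(K-D_{c_{1}}).H \quad\text{and}\quad \mu_{H}(E)+(K-c_{1}+D_{c_{1}}).H .
\]
If either slope is negative, a nonzero section would give a rank one subsheaf $\mathcal{I}_{W}(L)$ with $L$ effective, so $L.H\geq 0$, contradicting stability. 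The main step is therefore the slope check
\[
\tfrac12 c_{1}.H+K.H-D.H<0 \quad\text{and}\quad -\tfrac12 c_{1}.H+K.H+D.H<0 .
\]
Since $D_{c_{1}}$ and $c_{1}-D_{c_{1}}$ are effective, the first is at most $\tfrac12 c_{1}.H+K.H$. The second is at most $\tfrac12 c_{1}.H+K.H$ as well, because $D_{c_{1}}.H\leq c_{1}.H$.

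So both reduce to $(c_{1}+2K).H<0$. Here $c_{1}+2K=(\delta_{1}-4)\sigma+(\delta_{2}-2n-4)f$ with $\delta_{i}\in\{0,1\}$, that is, $-(\text{ample}+\text{effective})$. Writing $H=\alpha\sigma+\beta f$ with $\beta>\alpha n$, we get $(-3\sigma-(2n+3)f).H=3\alpha n-3\beta-(2n+3)\alpha<0$. This settles every case.

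An alternative that avoids the stability argument works directly from the sequence. One checks that $h^{0}(\mathcal{O}(K-D))=0$ and $h^{0}(\mathcal{I}_{Z}(K+c_{1}-2D))=0$ using $h^{0}$ of divisors with negative $\sigma$-coefficient. The first route is cleaner, and the only real obstacle is confirming the sign of $(c_{1}+2K).H$ uniformly for all allowed $c_{1}$, $D_{c_{1}}$ and ample $H$.
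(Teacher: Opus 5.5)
Your argument is correct. Its skeleton is the paper's. You tensor the extension by $E^{*}$ and reduce to $h^{2}(E^{*}(D_{c_{1}}))=0$ and $h^{2}(E^{*}\otimes\mathcal{I}_{Z}(c_{1}-D_{c_{1}}))=0$. You remove $\mathcal{I}_{Z}$ with the $\mathcal{O}_{Z}$-sequence and dualize to $h^{0}(E(K_{\Sigma_{n}}-D_{c_{1}}))$ and $h^{0}(E(K_{\Sigma_{n}}-c_{1}+D_{c_{1}}))$.

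The difference lies in how these last two groups are killed. The paper uses what you call the alternative. It twists the defining extension by $\mathcal{O}_{\Sigma_{n}}(K_{\Sigma_{n}}-D_{c_{1}})$ and by $\mathcal{O}_{\Sigma_{n}}(D_{c_{1}}+K_{\Sigma_{n}}-c_{1})$. It then observes that $K_{\Sigma_{n}}$, $K_{\Sigma_{n}}+c_{1}-2D_{c_{1}}$ and $2D_{c_{1}}+K_{\Sigma_{n}}-c_{1}$ are not effective. Two corrections to your sketch of that route: the sub line bundle of $E(K_{\Sigma_{n}}-D_{c_{1}})$ is $\mathcal{O}_{\Sigma_{n}}(K_{\Sigma_{n}})$, not $\mathcal{O}_{\Sigma_{n}}(K_{\Sigma_{n}}-D_{c_{1}})$. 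The second twist also needs its own checks, namely $h^{0}(\mathcal{O}_{\Sigma_{n}}(2D_{c_{1}}+K_{\Sigma_{n}}-c_{1}))=h^{0}(\mathcal{I}_{Z}(K_{\Sigma_{n}}))=0$.

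The paper's route never uses stability, so it gives the vanishing for every bundle arising from such an extension. The cost is checking the listed divisors case by case. Your route uses the $H$-stability of $E$ and replaces the case check by the single uniform inequality $(c_{1}+2K_{\Sigma_{n}}).H<0$. It needs only that $D_{c_{1}}$ and $c_{1}-D_{c_{1}}$ be effective.

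Once you allow yourself stability, though, the extension is no longer needed at all. By Serre duality, $h^{2}(E\otimes E^{*})=\dim\mathrm{Hom}(E,E(K_{\Sigma_{n}}))$. This is zero for every $H$-stable $E$, because $E(K_{\Sigma_{n}})$ is $H$-stable of strictly smaller slope ($K_{\Sigma_{n}}.H<0$). This is the standard argument behind the smoothness statement in Proposition~\ref{Proposition 4.4}. It shows the conclusion holds on all of $M_{\Sigma_{n},H}(2;c_{1},c_{2})$.
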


\begin{proof}
First, we show that $h^{2}(E^{*})=0$. If we tensor the short exact sequence
\eqref{squence Proposition 4.3.2} by $\mathcal{O}_{\Sigma_{n}}(K_{\Sigma_{n}})$ and we take cohomology, we get the long exact sequence
\begin{align*}
    0 \longrightarrow H^{0}(\mathcal{O}_{\Sigma_{n}}(D_{c_{1}}+K_{\Sigma_{n}}))\longrightarrow H^{0}(E(K_{\Sigma_{n}}))\longrightarrow H^{0}(\mathcal{I}_{Z}(c_{1}-D_{c_{1}}+K_{\Sigma_{n}}))\longrightarrow...
\end{align*}
For every choice of $c_{1}$ both $D_{c_{1}}+K_{\Sigma_{n}}$ and $c_{1}-D_{c_{1}}+K_{\Sigma_{n}}$ are not effective, then $h^{0}(\mathcal{O}_{\Sigma_{n}}(D_{c_{1}}+K_{\Sigma_{n}}))=0$ and $h^{0}(\mathcal{I}_{Z}(c_{1}-D_{c_{1}}+K_{\Sigma_{n}}))=0$, and so, $h^{2}(E^{*})=h^{0}(E(K_{\Sigma_{n}}))=0$, by Serre duality.

We will see that $h^{2}(E^{*}(D_{c_{1}}))=0$ and $h^{2}(E^{*}\otimes\mathcal{I}_{Z}(c_{1}-D_{c_{1}}))=0$. First, since $K_{\Sigma_{n}}$ and $K_{\Sigma_{n}}+c_{1}-2D_{c_{1}}$ are not effective, when we tensor the short exact sequence \eqref{squence Proposition 4.3.2} by $\mathcal{O}_{\Sigma_{n}}(K_{\Sigma_{n}}-D_{c_{1}})$, we get $h^{0}(\mathcal{O}_{\Sigma_{n}}(K_{\Sigma_{n}}))=0$ and $h^{0}(\mathcal{I}_{Z}(K_{\Sigma_{n}}+c_{1}-2D_{c_{1}}))=0$ and then $h^{0}(E(K_{\Sigma_{n}}-D_{c_{1}}))=h^{2}(E^{*}(D_{c_{1}}))=0$. Now, since $h^{i}(E^{*}\otimes\mathcal{O}_{Z}(c_{1}-D_{c_{1}}))=0$ for $i\in\{1,2\}$, if we tensor the short exact sequence
\begin{align*}
    0 \longrightarrow \mathcal{I}_{Z}\longrightarrow \mathcal{O}_{\Sigma_{n}}\longrightarrow\mathcal{O}_{Z}\longrightarrow 0,
\end{align*}
by $E^{*}(c_{1}-D_{c_{1}})$, we have that, $h^{2}(E^{*}\otimes\mathcal{I}_{Z}(c_{1}-D_{c_{1}}))=h^{2}(E^{*}(c_{1}-D_{c_{1}}))$. Since for any value of $c_{1}$ it is holds that $2D_{c_{1}}+K_{\Sigma_{n}}-c_{1}$ and $K_{\Sigma_{n}}$ are not effective, when we tensor the exact sequence (\ref{squence Proposition 4.3.2}) by $\mathcal{O}_{\Sigma_{n}}(D_{c_{1}}+K_{\Sigma_{n}}-c_{1})$, we have that $h^{0}(E(D_{c_{1}}+K_{\Sigma_{n}}-c_{1}))=h^{2}(E^{*}(c_{1}-D_{c_{1}}))=h^{2}(E^{*}\otimes\mathcal{I}_{Z}(c_{1}-D_{c_{1}}))=0$.

Finally, since $h^{2}(E^{*}(D_{c_{1}}))=0$ and $h^{2}(E^{*}\otimes\mathcal{I}_{Z}(c_{1}-D_{c_{1}}))=0$, when we tensor (\eqref{squence Proposition 4.3.2}) by $E^{*}$ we deduce that $h^{2}(E\otimes E^{*})=0$. Hence, $E$ is a smooth point of $M_{\Sigma_{n},H}(2;c_{1},c_{2})$.
\end{proof}

\begin{proposition}\label{Proposition 4.3.3}
Let $c_{1}\in\{\sigma,f,\sigma+f\}$ and $E\in M_{\Sigma_{n},H}(2;c_{1},c_{2})$ given by a short exact sequence of type
\begin{align}
0 \longrightarrow \mathcal{O}_{\Sigma_{n}}\longrightarrow E\longrightarrow \mathcal{I}_{Z}(c_{1})\longrightarrow 0, \label{squence Proposition 4.3.3}
\end{align}
where $Z$ is any $0$-dimensional subscheme of length $|Z|=c_{2}$. If $h^{0}(E)=1$, then the map
\begin{align*}
\mu_{E}:H^{0}(E)\otimes H^{1}(E^{*}\otimes\mathcal{O}_{\Sigma_{n}}(K_{\Sigma_{n}}))\longrightarrow H^{1}(E\otimes E^{*}\otimes\mathcal{O}_{\Sigma_{n}}(K_{\Sigma_{n}}))
\end{align*}
is injective.
\end{proposition}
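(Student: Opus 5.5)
Since $h^{0}(E)=1$, the space $H^{0}(E)$ is spanned by the section $s$ coming from $\mathcal{O}_{\Sigma_{n}}\hookrightarrow E$ in \eqref{squence Proposition 4.3.3}. The map $\mu_{E}$ is therefore identified with
\begin{align*}
\tilde{\mu}_{E}:H^{1}(E^{*}\otimes\mathcal{O}_{\Sigma_{n}}(K_{\Sigma_{n}}))\longrightarrow H^{1}(E\otimes E^{*}\otimes\mathcal{O}_{\Sigma_{n}}(K_{\Sigma_{n}})),
\end{align*}
which is induced on cohomology by the sheaf map $s\otimes\mathrm{id}:E^{*}(K_{\Sigma_{n}})\to E\otimes E^{*}(K_{\Sigma_{n}})$. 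The plan is to show that this map is injective, following the idea of \cite[Proposition 4.12]{Costa3}.

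First, tensor \eqref{squence Proposition 4.3.3} by the locally free sheaf $E^{*}(K_{\Sigma_{n}})$. This gives the exact sequence
\begin{align*}
0\longrightarrow E^{*}(K_{\Sigma_{n}})\xrightarrow{\ s\otimes\mathrm{id}\ } E\otimes E^{*}(K_{\Sigma_{n}})\longrightarrow E^{*}\otimes\mathcal{I}_{Z}(c_{1}+K_{\Sigma_{n}})\longrightarrow 0,
\end{align*}
and exactness on the left holds because $E^{*}$ is locally free. Taking cohomology, the kernel of $\tilde{\mu}_{E}$ equals the image of the connecting map
\begin{align*}
\delta: H^{0}(E^{*}\otimes\mathcal{I}_{Z}(c_{1}+K_{\Sigma_{n}}))\longrightarrow H^{1}(E^{*}(K_{\Sigma_{n}})).
\end{align*}
It therefore suffices to prove $H^{0}(E^{*}\otimes\mathcal{I}_{Z}(c_{1}+K_{\Sigma_{n}}))=0$.

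Since $\mathcal{I}_{Z}\subset\mathcal{O}_{\Sigma_{n}}$ and $E^{*}$ is locally free, this group injects into $H^{0}(E^{*}(c_{1}+K_{\Sigma_{n}}))$. For a rank two bundle $E^{*}\cong E(-c_{1})$, so $E^{*}(c_{1}+K_{\Sigma_{n}})\cong E(K_{\Sigma_{n}})$, and by Serre duality $h^{0}(E(K_{\Sigma_{n}}))=h^{2}(E^{*})$. The vanishing $h^{0}(E(K_{\Sigma_{n}}))=0$ was already obtained in the first step of the proof of Proposition \ref{Proposition 4.3.2} with $D_{c_{1}}=0$. Alternatively, tensor \eqref{squence Proposition 4.3.3} by $K_{\Sigma_{n}}$: the divisors $K_{\Sigma_{n}}$ and $c_{1}+K_{\Sigma_{n}}$ both have negative $\sigma$-coefficient, hence are non-effective for $c_{1}\in\{\sigma,f,\sigma+f\}$, so both outer terms have no sections. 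A third route is stability: $\mu_{H}(E(K_{\Sigma_{n}}))<0$, so $E(K_{\Sigma_{n}})$ has no sections, as in Proposition \ref{Proposition 4.3.1}. With this vanishing, $\delta=0$ and $\tilde{\mu}_{E}$, hence $\mu_{E}$, is injective.

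The main point needing care is the identification of $\mu_{E}$ with $\tilde{\mu}_{E}$. One has to check that the cup-product map $H^{0}(E)\otimes H^{1}(E^{*}(K))\to H^{1}(E\otimes E^{*}(K))$, restricted to $s\otimes(-)$, really is the map induced by the sheaf morphism $s\otimes\mathrm{id}$. This is standard functoriality of cup product, and I would state it explicitly. Beyond that, the argument is a cohomology chase, and no generality assumption on $Z$ is required. This matches the statement's allowance of any $0$-dimensional $Z$.
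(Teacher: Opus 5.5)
Your argument is correct and follows the paper's proof. Both reduce $\mu_{E}$ to $\tilde{\mu}_{E}$ using $h^{0}(E)=1$, tensor the extension by $E^{*}(K_{\Sigma_{n}})$, and kill the connecting map by showing $H^{0}(E^{*}\otimes\mathcal{I}_{Z}(c_{1}+K_{\Sigma_{n}}))\subset H^{0}(E^{*}(c_{1}+K_{\Sigma_{n}}))=0$; the only cosmetic difference is that the paper gets this vanishing as $h^{2}(E(-c_{1}))=0$ via Serre duality, while you use $E^{*}(c_{1}+K_{\Sigma_{n}})\cong E(K_{\Sigma_{n}})$ and compute $h^{0}(E(K_{\Sigma_{n}}))=0$ directly, which is the same computation.
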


\begin{proof}
In this case the injectivity of $\mu_{E}$ is equivalent to proving the injectivity of
\begin{align*}
    \tilde{\mu_{E}}:H^{1}(E^{*}\otimes\mathcal{O}_{\Sigma_{n}}(K_{\Sigma_{n}}))\longrightarrow H^{1}(E\otimes E^{*}\otimes\mathcal{O}_{\Sigma_{n}}(K_{\Sigma_{n}})).
\end{align*}
Since $h^{2}(\mathcal{I}_{Z})=h^{2}(\mathcal{O}_{\Sigma_{n}})=0$ and by Serre duality $h^{2}(\mathcal{O}_{\Sigma_{n}}(-c_{1}))=h^{0}(\mathcal{O}_{\Sigma_{n}}(c_{1}+K_{\Sigma_{n}}))=0$, if we tensor the exact sequence (\ref{squence Proposition 4.3.3}) by $\mathcal{O}_{\Sigma_{n}}(-c_{1})$, we deduce that $h^{2}(E(-c_{1}))=0$ and again by Serre duality $h^{0}(E^{*}(K_{\Sigma_{n}}+c_{1}))=0$. Furthermore, if $h^{0}(E^{*}(K_{\Sigma_{n}}+c_{1}))=0$, we have from the short exact sequence
\begin{align*}
0\longrightarrow\mathcal{I}_{Z}(E^{*}(K_{\Sigma_{n}}+c_{1}))\longrightarrow\mathcal{O}_{\Sigma_{n}}(E^{*}(K_{\Sigma_{n}}+c_{1}))\longrightarrow\mathcal{O}_{Z}(E^{*}(K_{\Sigma_{n}}+c_{1}))\longrightarrow 0,
\end{align*}
that $h^{0}(\mathcal{I}_{Z}(E^{*}(K_{\Sigma_{n}}+c_{1})))=0$. If we tensor the exact sequence (\ref{squence Proposition 4.3.3}) by $E^{*}\otimes\mathcal{O}_{\Sigma_{n}}(K_{\Sigma_{n}})$ and we take cohomology, we get 
\begin{align*}
0\longrightarrow H^{1}(E^{*}(K_{\Sigma_{n}}))\stackrel{\tilde{\mu_{E}}}\longrightarrow H^{1}(E\otimes E^{*}(K_{\Sigma_{n}}))\longrightarrow...
\end{align*} 
which implies that $\tilde{\mu_{E}}$ is injective, and so, $\mu_{E}$ is injective. 
\end{proof}

\begin{theorem}\label{Theorem 4.6}
Let $E\in M_{\Sigma_{n},H}(2;c_{1},c_{2})$ with $c_{1}=c\sigma+df$ effective and $c_{2}>0$ such that $E$ lies in an extension of type
\begin{align}
    0\longrightarrow \mathcal{O}_{\Sigma_{n}}(D)\longrightarrow E \longrightarrow \mathcal{I}_{Z}(c_{1}-D)\longrightarrow 0, \label{Example-muE}
\end{align}
where $Z$ is a general $0$-dimensional subscheme of length $|Z|=c_{2}-D.(c_{1}-D)$ such that $|Z|=h^{0}(\mathcal{O}_{\Sigma_{n}}(c_{1}))$ where $D=a\sigma+bf$ is a base-point-free divisor on $\Sigma_{n}$ such that $h^{0}(\mathcal{O}_{\Sigma_{n}}(D))\geq 2$, $h^{1}(\mathcal{O}_{\Sigma_{n}}(2D))=0$ and $2(a+1)>c$ or $2(b+1)>d$. Then, if $h^{1}(\mathcal{O}_{\Sigma_{n}}(c_{1})))=0$, the map
\begin{align*}
    \mu_{E}:H^{0}(E)\otimes H^{1}(E^{*}\otimes\mathcal{O}_{\Sigma_{n}}(K_{\Sigma_{n}}))\longrightarrow H^{1}(E\otimes E^{*}\otimes\mathcal{O}_{\Sigma_{n}}(K_{\Sigma_{n}}))
\end{align*}
is injective.
\end{theorem}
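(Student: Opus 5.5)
We will control $H^{0}(E)$ using the extension \eqref{Example-muE}, and then check injectivity of $\mu_{E}$ piece by piece through the filtration of $E$ induced by that sequence. This is the strategy of Proposition \ref{Proposition 4.3.3}, now with more than one section.

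\textbf{Step 1: compute $H^{0}(E)$.} Since $D$ is effective and base-point-free with $h^{0}(\mathcal{O}_{\Sigma_{n}}(D))\geq 2$, we have $a,b\geq 0$. Lemma \ref{Lemma 1.23} then gives $h^{1}(\mathcal{O}_{\Sigma_{n}}(D))=0$. The cohomology sequence of \eqref{Example-muE} yields
\[
0\to H^{0}(\mathcal{O}_{\Sigma_{n}}(D))\to H^{0}(E)\to H^{0}(\mathcal{I}_{Z}(c_{1}-D))\to 0 .
\]
Because $Z$ is general with $|Z|=h^{0}(\mathcal{O}_{\Sigma_{n}}(c_{1}))\geq h^{0}(\mathcal{O}_{\Sigma_{n}}(c_{1}-D))$, Proposition \ref{Proposition 1.27} gives $h^{0}(\mathcal{I}_{Z}(c_{1}-D))=0$. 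Hence $H^{0}(E)=H^{0}(\mathcal{O}_{\Sigma_{n}}(D))$, so every section of $E$ factors through $\mathcal{O}_{\Sigma_{n}}(D)$. Consequently $\mu_{E}$ factors as
\[
H^{0}(\mathcal{O}(D))\otimes H^{1}(E^{*}(K))\xrightarrow{\ m\ } H^{1}(E^{*}(D+K))\xrightarrow{\ \iota\ } H^{1}(E\otimes E^{*}(K)),
\]
where $\iota$ is induced by tensoring \eqref{Example-muE} with $E^{*}(K_{\Sigma_{n}})$. Injectivity of $\mu_{E}$ reduces to injectivity of $m$ and of $\iota$.

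\textbf{Step 2: injectivity of $\iota$.} The map $\iota$ is injective once $H^{0}(E^{*}\otimes\mathcal{I}_{Z}(c_{1}-D+K_{\Sigma_{n}}))=0$. This group injects into $H^{0}(E^{*}(c_{1}-D+K_{\Sigma_{n}}))$. Using $E^{*}\cong E(-c_{1})$, the latter equals $H^{0}(E(K_{\Sigma_{n}}-D))$. Twisting \eqref{Example-muE} by $K_{\Sigma_{n}}-D$, its outer terms are $H^{0}(\mathcal{O}(K_{\Sigma_{n}}))=0$ and $H^{0}(\mathcal{I}_{Z}(c_{1}-2D+K_{\Sigma_{n}}))$. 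The second vanishes if $c_{1}-2D+K_{\Sigma_{n}}=(c-2a-2)\sigma+(d-2b-n-2)f$ is non-effective, and the hypothesis $2(a+1)>c$ or $2(b+1)>d$ guarantees exactly this. The argument is the same as in Propositions \ref{Proposition 4.3.2} and \ref{Proposition 4.3.3}.

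\textbf{Step 3: injectivity of $m$ (the main obstacle).} This is a multiplication map $H^{0}(\mathcal{O}(D))\otimes H^{1}(F)\to H^{1}(F(D))$ with $F=E^{*}(K_{\Sigma_{n}})$. Since $D$ is base-point-free, we will use the base-point-free pencil trick. Choose two general sections $s_{1},s_{2}$ of $\mathcal{O}(D)$ and form the Koszul sequence $0\to\mathcal{O}(-D)\to\mathcal{O}^{2}\to\mathcal{O}(D)\to 0$. Tensoring by $F$, the kernel of $H^{1}(F)^{\oplus 2}\to H^{1}(F(D))$ is controlled by $H^{1}(F(-D))$ and $H^{0}(F(D))$ terms.

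We will express these groups through \eqref{Example-muE} dualized, namely $0\to\mathcal{O}(D-c_{1})\to E^{*}\to\mathcal{I}_{Z}(-D)\to 0$ up to the twist. The relevant vanishing statements then reduce, via Lemma \ref{Lemma 1.23} and Serre duality, to vanishing for line bundles such as $h^{1}(\mathcal{O}(2D))=0$ and $h^{1}(\mathcal{O}(c_{1}))=0$. This is exactly why those two hypotheses appear, and this is the step where they get used.

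If $h^{0}(\mathcal{O}(D))>2$, the pencil argument has to be iterated, or else applied to all of $H^{0}(\mathcal{O}(D))$ through the evaluation-kernel bundle $M_{D}$, requiring $H^{1}(M_{D}\otimes F(D))$-type vanishings. Verifying these from the stated hypotheses is where we expect the real difficulty. We would first attempt the case of a pencil and reduce the general case by restricting to a general pencil inside $|D|$. Finally, Corollary \ref{Corollary 4.1}, combined with smoothness of $M_{H}$ at $E$, yields the Brill–Noether consequences.
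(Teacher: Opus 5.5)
Your Steps 1 and 2 match the paper's proof. Since $h^{0}(\mathcal{I}_{Z}(c_{1}-D))=0$, every section of $E$ comes from $\mathcal{O}_{\Sigma_{n}}(D)$, so $\mu_{E}=\varphi\circ\tau$, with $\tau$ your $m$ and $\varphi$ your $\iota$. The map $\varphi$ is injective because $K+c_{1}-2D$ is not effective; the paper phrases this as $H^{2}(E(D-c_{1}))=0$ plus Serre duality, and your direct $H^{0}$ computation is the same argument. One small slip: $a,b\geq 0$ does not give $h^{1}(\mathcal{O}_{\Sigma_{n}}(D))=0$ (e.g.\ $h^{1}(\mathcal{O}_{\Sigma_{n}}(\sigma))=n-1$). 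You need $b\geq an$, which base-point-freeness provides, and you do not need this vanishing anyway.

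The problem is Step 3. When $h^{0}(\mathcal{O}_{\Sigma_{n}}(D))=2$, the evaluation kernel is $\mathcal{O}_{\Sigma_{n}}(-D)$, so $\ker\tau$ is the image of $H^{1}(E^{*}(K-D))\cong H^{1}(E(D))^{*}$. The paper kills this group via $0\to\mathcal{O}_{\Sigma_{n}}(2D)\to E(D)\to\mathcal{I}_{Z}(c_{1})\to 0$. Here $h^{1}=h^{2}=0$ for $\mathcal{O}_{\Sigma_{n}}(2D)$, and $h^{1}(\mathcal{I}_{Z}(c_{1}))=h^{1}(\mathcal{O}_{\Sigma_{n}}(c_{1}))=0$ because $h^{0}(\mathcal{I}_{Z}(c_{1}))=0$ for general $Z$ with $|Z|=h^{0}(\mathcal{O}_{\Sigma_{n}}(c_{1}))$. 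You never isolate this group. The dual-extension route you suggest leads to $H^{1}(\mathcal{I}_{Z}(K-2D))$, which has dimension $|Z|$. That is not a line-bundle vanishing, and the generality of $Z$ has to enter exactly as above.

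For $h^{0}(\mathcal{O}_{\Sigma_{n}}(D))>2$, both of your proposed fixes fail. Two general sections of a base-point-free $D$ with $D^{2}>0$ have $D^{2}$ common zeros, so your Koszul sequence is not exact. And injectivity on each $V\otimes H^{1}(F)$ with $\dim V=2$ would not give injectivity on $H^{0}(\mathcal{O}_{\Sigma_{n}}(D))\otimes H^{1}(F)$; the real obstruction group is $H^{1}(M_{D}\otimes F)$. More decisively, nothing can work there, because the statement is false in that range.

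On $\Sigma_{0}$ take $D=\sigma+f$, $c_{1}=3\sigma+3f$, $Z$ sixteen general points (so $c_{2}=20$), and $H=\sigma+f$.
\begin{itemize}
\item All hypotheses hold, and the extension is locally free since $c_{1}-2D+K=-\sigma-f$ has no sections.
\item $E$ is $H$-stable. Subbundles inside $\mathcal{O}(D)$ have degree at most $2$. Any other line subbundle $\mathcal{O}(x\sigma+yf)$ needs $x,y\leq 2$ and $(3-x)(3-y)\geq 17$, which forces $x+y\leq 2<3=\mu_{H}(E)$.
\item One computes $h^{0}(E)=4$, $h^{1}(E^{*}(K))=h^{1}(\mathcal{I}_{Z}(2\sigma+2f))=7$, and $h^{1}(E^{*}(K+D))=h^{1}(E(-D))=h^{1}(\mathcal{I}_{Z}(\sigma+f))=12$.
\end{itemize}
So $\tau$ maps a $28$-dimensional space to a $12$-dimensional one, and $\mu_{E}=\varphi\circ\tau$ is not injective. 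Consistently, these bundles fill a $47$-dimensional family in $W^{4}_{H}$, whose expected dimension is $31$.

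The paper's proof breaks at the same point. It writes $0\to\mathcal{O}_{\Sigma_{n}}(-D)\to H^{0}(E)\otimes\mathcal{O}_{\Sigma_{n}}\to\mathcal{O}_{\Sigma_{n}}(D)\to 0$ after computing only the determinant of the kernel. That identifies the kernel only when it has rank one, i.e.\ when $h^{0}(\mathcal{O}_{\Sigma_{n}}(D))=2$. So the result should be restricted to that case; for $n\geq 1$ this means $D=f$, which is the only case used later. In that range, your pencil argument with the $H^{1}(E(D))=0$ computation supplied is exactly the paper's proof.
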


\begin{proof}
First, note that on $\Sigma_{n}$ with $n\geq 1$, the only effective divisors with exactly a unique global section are divisors of the form $L=r\sigma$ with $r\geq 0$ and on $\Sigma_{0}$ necessarily $L=0$, then, since $D$ is such that $h^{0}(D)\geq 2$, it is necessary that $D\neq 0$ for every $n\geq 0$.

Note that $K_{\Sigma_{n}}+c_{1}-2D$ is non-effective because $c<2(a+1)$ or $d<2(b+1)$, when we tensor (\ref{Example-muE}) by $\mathcal{O}_{\Sigma_{n}}(D-c_{1})$ and take cohomology, we get $H^{2}(\mathcal{O}_{\Sigma_{n}}(2D-c_{1}))\cong H^{0}(\mathcal{O}_{\Sigma_{n}}(K_{\Sigma_{n}}+c_{1}-2D))=\{0\}$, and therefore, $H^{2}(E(D-c_{1}))\cong H^{2}(\mathcal{I}_{Z})=\{0\}$.

Moreover, $H^{0}(E^{*}\otimes\mathcal{I}_{Z}(c_{1}+K_{\Sigma_{n}}-D))\cong H^{2}(E(D-c_{1}))$ by Serre duality, then if we tensor (\ref{Example-muE}) by $E^{*}\otimes\mathcal{O}_{\Sigma_{n}}(K_{\Sigma_{n}})$ and take cohomology, we have
\begin{align*}
    \varphi: H^{1}(E^{*}\otimes\mathcal{O}_{\Sigma_{n}} (K_{\Sigma_{n}})\otimes\mathcal{O}_{\Sigma_{n}}(D))\longrightarrow H^{1}(E\otimes E^{*}\otimes\mathcal{O}_{\Sigma_{n}}(K_{\Sigma_{n}})
\end{align*}
is injective.

We have the commutative diagram (\ref{diagrama-muE}), where $\tau$ comes from the cup product.

\begin{figure}[h]
\centering
\[
\begin{tikzcd}[column sep=5em, row sep=5em]
H^{0}(E)\otimes H^{1}(E^{*}\otimes \mathcal{O}_{\Sigma_{n}}(K_{\Sigma_{n}}))
\arrow[r, "\tau"']
\arrow[dr, "\mu_{E}"']
&
H^{1}(E^{*}\otimes\mathcal{O}_{\Sigma_{n}}(K_{\Sigma_{n}})\otimes\mathcal{O}_{\Sigma_{n}}(D))
\arrow[d, "\varphi"]
\\
&
\hspace{-3em}
H^{1}(E\otimes E^{*}\otimes \mathcal{O}_{\Sigma_{n}}(K_{\Sigma_{n}}))
\end{tikzcd}
\]
\caption{Commutative diagram of $\mu_{E}$.}
\label{diagrama-muE}
\end{figure}

Since $|Z|=h^{0}(\mathcal{O}_{\Sigma_{n}}(c_{1}))\geq h^{0}(\mathcal{O}_{\Sigma_{n}}(c_{1}-D))$ by cohomology, then there is an isomorphism $\phi:H^{0}(E)\longrightarrow H^{0}(\mathcal{O}_{\Sigma_{n}}(D))$ by Proposition \ref{Proposition 1.27}, and then $H^{0}(E)\otimes\mathcal{O}_{\Sigma_{n}}\cong H^{0}(D)\otimes\mathcal{O}_{\Sigma_{n}}$. Moreover, since $D$ is free of base points, then the mapping
\[
\begin{aligned}
ev_{D}:H^{0}(D)\otimes\mathcal{O}_{\Sigma_{n}} &\longrightarrow \mathcal{O}_{\Sigma_{n}}(D)\\
(t,p) &\longmapsto t(p)
\end{aligned}
\]
is surjective, and therefore so is
\[
\begin{aligned}
ev:H^{0}(E)\otimes\mathcal{O}_{\Sigma_{n}} &\longrightarrow \mathcal{O}_{\Sigma_{n}}(D)\\
(s,p) &\longmapsto t_{s}(p),
\end{aligned}
\]
where $t_{s}=\phi(s)$. Since $H^{0}(E)\otimes\mathcal{O}_{\Sigma_{n}}$ is a trivial sheaf of rank $r\geq h^{0}(\mathcal{O}_{\Sigma_{n}}(D))\geq 2$, then $\text{det}(H^{0}(E)\otimes\mathcal{O}_{\Sigma_{n}})=\mathcal{O}_{\Sigma_{n}}$; and on the other hand,
\begin{align*}
    \text{det}(H^{0}(E)\otimes\mathcal{O}_{\Sigma_{n}}) = \text{det}(\mathcal{O}_{\Sigma_{n}}(D))\otimes\text{det}(\text{Ker}(ev)) = \mathcal{O}_{\Sigma_{n}}(D)\otimes\text{det}(\text{Ker}(ev)),
\end{align*}
and so, $\text{det}(\text{Ker}(ev))=\mathcal{O}_{\Sigma_{n}}(-D)$. Thus, we have the following exact sequence
\begin{align}
    0\longrightarrow\mathcal{O}_{\Sigma_{n}}(-D)\longrightarrow H^{0}(E)\otimes\mathcal{O}_{\Sigma_{n}}\longrightarrow\mathcal{O}_{\Sigma_{n}}(D)\longrightarrow 0. \label{Example-muE2}
\end{align}
Note that $H^{0}(H^{0}(E)\otimes E^{*}\otimes\mathcal{O}_{\Sigma_{n}}(K_{\Sigma_{n}}))=H^{0}(E)\otimes H^{0}(E^{*}\otimes\mathcal{O}_{\Sigma_{n}}(K_{\Sigma_{n}}))=H^{0}(E)\otimes H^{2}(E)=0$ by Proposition \ref{Proposition 4.3.1}. Since $H^{1}(H^{0}(E)\otimes E^{*}(K_{\Sigma_{n}}))=H^{0}(E)\otimes H^{1}(E^{*}\otimes\mathcal{O}_{\Sigma_{n}}(K_{\Sigma_{n}}))$, if we tensor (\ref{Example-muE2}) by $E^{*}\otimes\mathcal{O}_{\Sigma_{n}}(K_{\Sigma_{n}})$ and take cohomology, we have that $\tau$ is injective if $h^{1}(E^{*}(K_{\Sigma_{n}}-D))=h^{1}(E(D))=0$.

Now, since $h^{1}(\mathcal{O}_{\Sigma_{n}}(2D))=0$ and $h^{2}(\mathcal{O}_{\Sigma_{n}}(2D))=h^{0}(\mathcal{O}_{\Sigma_{n}}(K_{\Sigma_{n}}-2D))=0$ because $D$ is effective, then if we tensor (\ref{Example-muE}) by $\mathcal{O}_{\Sigma_{n}}(D)$ and take cohomology, we have $h^{1}(E(D))=h^{1}(\mathcal{I}_{Z}(c_{1}))$. We tensor by $\mathcal{O}_{\Sigma_{n}}(c_{1})$ the exact sequence
\begin{align*}
0\longrightarrow\mathcal{I}_{Z}\longrightarrow\mathcal{O}_{\Sigma_{n}}\longrightarrow\mathcal{O}_{Z}\longrightarrow 0,
\end{align*}
and since $|Z|=h^{0}(\mathcal{O}_{\Sigma_{n}}(c_{1}))$ with $Z$ general, then by Proposition \ref{Proposition 1.27}, $h^{0}(\mathcal{I}_{Z}(c_{1}))=0$. On the other hand, since $h^{i}(\mathcal{O}_{Z}(c_{1}))=0$ for $i\in\{1,2\}$, then $h^{2}(\mathcal{I}_{Z}(c_{1}))=h^{2}(\mathcal{O}_{\Sigma_{n}}(c_{1}))=h^{0}(\mathcal{O}_{\Sigma_{n}}(K_{\Sigma_{n}}-c_{1}))=0$ because $c_{1}$ is effective. Therefore, we have the following:
\begin{align*}
   h^{1}(\mathcal{I}_{Z}(c_{1})) & = -\chi(\mathcal{I}_{Z}(c_{1})) \\
   & = |Z|-\chi(\mathcal{O}_{\Sigma_{n}}(c_{1})) \\
   & = |Z|-h^{0}(\mathcal{O}_{\Sigma_{n}}(c_{1}))+h^{1}(\mathcal{O}_{\Sigma_{n}}(c_{1}))-h^{2}(\mathcal{O}_{\Sigma_{n}}(c_{1})) \\
   & = h^{1}(\mathcal{O}_{\Sigma_{n}}(c_{1}))-h^{2}(\mathcal{O}_{\Sigma_{n}}(c_{1})),
\end{align*}
but $h^{2}(\mathcal{O}_{\Sigma_{n}}(c_{1}))=h^{0}(\mathcal{O}_{\Sigma_{n}}(K_{\Sigma_{n}}-c_{1}))=0$, then $h^{1}(\mathcal{I}_{Z}(c_{1}))= h^{1}(\mathcal{O}_{\Sigma_{n}}(c_{1}))$. Therefore, if $h^{1}(\mathcal{O}_{\Sigma_{n}}(c_{1}))=0$, then $\tau$ is injective, and by the commutative diagram (\ref{diagrama-muE}), then $\mu_{E}=\varphi\circ\tau$ is also injective.
\end{proof}

\begin{remark}\label{Remark 4.5}
Let $E\in M_{\Sigma_{n},H}(2;c_{1},c_{2})$ such that $h^{0}(E)>0$, and let $0\neq s\in H^{0}(E)$. Denote by $Y$ the scheme of zeros of $s$, and let $D=a\sigma+bf$ be the maximal effective divisor contained in $Y$. Then, $s$ can be regarded as a section of $E(-D)$ and its scheme of zeros has codimension greater than or equal to two. Thus, we have a short exact sequence
\begin{align*}
    0\longrightarrow \mathcal O_{\Sigma_n}(D)\longrightarrow E\longrightarrow \mathcal{I}_{Z}(c_{1}-D)\longrightarrow 0,
\end{align*}
where $Z$ is a locally complete intersection $0$-cycle of length $|Z|=c_{2}-D.(c_{1}-D)$.
\end{remark}

\begin{proposition}\label{Proposition 4.8}
Let $\Sigma_{n}$ be a Hirzebruch surface and $H_{t}\equiv \alpha\sigma+\beta_{t}f$ an ample divisor on $\Sigma_{n}$ with $\beta_{t}=\alpha n+t$ where $0<t\leq\alpha$. Then, $W_{H_{t}}^{1}(2;\sigma,c_{2})\setminus W_{H_{t}}^{2}(2;\sigma,c_{2})$ is smooth and irreducible of the expected dimension, namely $\rho_{H_{t}}^{1}=3c_{2}-1$ if $n\geq 1$ and $c_{2}>0$ or $n=0$ and $c_{2}\geq 2$.
\end{proposition}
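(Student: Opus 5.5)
The plan is to show that every $E\in W^{1}_{H_t}(2;\sigma,c_2)\setminus W^{2}_{H_t}(2;\sigma,c_2)$ is a nontrivial extension of $\mathcal{I}_Z(\sigma)$ by $\mathcal{O}_{\Sigma_n}$. The earlier propositions then give smoothness at each point. Irreducibility and the dimension count come from parametrizing the locus by a projective bundle over $\mathrm{Hilb}^{c_2}(\Sigma_n)$.

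\textbf{Step 1: normal form.} Take $E$ in the locus, so $h^0(E)=1$, and let $s$ be the unique section up to scalar. By Remark \ref{Remark 4.5} there is a sequence
\[
0\to\mathcal{O}_{\Sigma_n}(D)\to E\to\mathcal{I}_Z(\sigma-D)\to 0
\]
with $D=a\sigma+bf$ effective, so $a,b\ge 0$. Then $\deg_{H_t}\mathcal{O}_{\Sigma_n}(D)=at+b\alpha$. $H_t$-stability forces this to be $<\mu_{H_t}(E)=t/2$, and since $\alpha\ge t$ this gives $a=b=0$. Hence $D=0$, $E$ sits in a sequence of type (\ref{squence Proposition 4.3.3}) with $c_1=\sigma$, and $|Z|=c_2$.

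\textbf{Step 2: smoothness and dimension.} Proposition \ref{Proposition 4.3.2} (case $c_1=\sigma$, $D_{c_1}=0$) gives $h^2(E\otimes E^*)=0$, so $M_{\Sigma_n,H_t}$ is smooth at $E$. Since $h^0(E)=1$, Proposition \ref{Proposition 4.3.3} shows $\mu_E$ is injective. Corollary \ref{Corollary 4.1} then shows $W^1_{H_t}$ is smooth of the expected dimension at $E$.

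For the number, use Proposition \ref{Proposition 4.4} with $c_1^2=-n$, $p_a=0$ and $c_1.K_{\Sigma_n}=n-2$. This gives $\dim M=4c_2+n-3$, and the correction term in Corollary \ref{Corollary 4.5} is $1\cdot(1-2+(n-1)+c_2)=c_2+n-2$. Therefore $\rho^1_{H_t}=3c_2-1$.

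\textbf{Step 3: irreducibility.} For $Z\in\mathrm{Hilb}^{c_2}(\Sigma_n)$, Serre duality gives $\mathrm{Ext}^1(\mathcal{I}_Z(\sigma),\mathcal{O})\cong H^1(\mathcal{I}_Z(\sigma+K_{\Sigma_n}))^*$. Here $h^0(\mathcal{I}_Z(\sigma+K))=0$. Also $h^2(\mathcal{I}_Z(\sigma+K))=h^2(\mathcal{O}(-\sigma-(n+2)f))=0$, by Lemma \ref{Lemma 1.23} since the $\sigma$-coefficient is $-1$, which also gives $\chi(\mathcal{O}(\sigma+K))=0$. Hence the Ext group has dimension exactly $c_2$ for every $Z$, independent of $Z$.

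These extension spaces therefore form a projective bundle $\mathbb{P}$ with fibres $\mathbb{P}^{c_2-1}$ over the smooth irreducible $2c_2$-dimensional Hilbert scheme. So $\mathbb{P}$ is irreducible of dimension $3c_2-1$. The conditions ``$E$ locally free'' (Cayley–Bacharach, automatic here), ``$E$ is $H_t$-stable'' and ``$h^0(E)=1$'' are open on $\mathbb{P}$. They are nonempty in the stated range of $c_2$: Theorem \ref{Theorem 3.4}(3) and its computation of $h^0(E)$ exhibit such a bundle for $n\ge1,c_2>0$ or $n=0,c_2\ge2$, which is exactly the hypothesis.

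The classifying map from this open set to $W^1\setminus W^2$ is surjective by Step 1. It is injective, because $E$ determines $s$ up to scalar, hence $Z=Z(s)$ and the extension class up to scalar. So the locus is the image of an irreducible variety, hence irreducible, and its dimension $3c_2-1$ agrees with Step 2.

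\textbf{Main obstacle.} I expect Step 3 to be the delicate part. Nonemptiness and openness of stability on $\mathbb{P}$ are straightforward once Theorem \ref{Theorem 3.4} is available. The real work is making the classifying map a genuine bijective morphism onto the locus, using a universal extension and the universal property of the moduli space. I would handle this by noting that the smoothness from Step 2 already fixes the local dimension everywhere, so a bijective morphism from an irreducible space suffices.
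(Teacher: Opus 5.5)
Your proposal is correct and follows the paper's proof essentially step for step. The three steps match: $D=0$ from Remark~\ref{Remark 4.5} plus $H_t$-stability; smoothness and expected dimension from Propositions~\ref{Proposition 4.3.2} and \ref{Proposition 4.3.3} with Corollary~\ref{Corollary 4.1}; and irreducibility from the $\mathbb{P}^{c_2-1}$-bundle of extensions over $\mathrm{Hilb}^{c_2}(\Sigma_n)$ surjecting onto the locus. Your last step is, if anything, tidier than the paper's dominance-and-dimension argument: you use openness of local freeness, stability and $h^0=1$, nonemptiness via Theorem~\ref{Theorem 3.4}(3), injectivity from uniqueness of the section, and you compute $\mathrm{Ext}^1(\mathcal{I}_Z(\sigma),\mathcal{O}_{\Sigma_n})$ directly rather than as $H^1$ of the $\mathcal{H}om$ sheaf.
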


\begin{proof}
Let us take a vector bundle $E\in W_{H_{t}}^{1}(2;\sigma,c_{2})\setminus W_{H_{t}}^{2}(2;\sigma,c_{2})$. Since $h^{0}(E)=1$, by Remark \ref{Remark 4.5} there exists an exact sequence 
\begin{align*}
    0 \longrightarrow \mathcal{O}_{\Sigma_{n}}(D)\longrightarrow E\longrightarrow \mathcal{I}_{Z}(\sigma-D)\longrightarrow 0,
\end{align*}
where $Z$ is a locally complete intersection $0$-cycle of length $|Z|=c_{2}-D.(\sigma -D)$.

Since $D=a\sigma+bf$ is effective, $a,b\geq 0$. We show that $D=0$. Assume that $D\neq 0$. Since $D$ is effective, $D.L>0$ for any ample divisor $L$ on $\Sigma_{n}$. On the other hand, since $E$ is $H_{t}$-stable, we have that $\mu_{H_{t}}(\mathcal{O}_{\Sigma_{n}}(D))<\mu_{H_{t}}(E)$. Putting it all altogether, we get
\begin{align*}
    0<\mu_{H_{t}}(\mathcal{O}_{\Sigma_{n}}(D))=D.H_{t} < {t\over 2} =\mu_{H_{t}}(E),
\end{align*}
but, we have
\begin{align*}
    \mu_{H_{t}}(\mathcal{O}_{\Sigma_{n}}(D))=at+b\alpha<{t\over 2}=\mu_{H_{t}}(E) \Longleftrightarrow 2at+2b\alpha<t.
\end{align*}
If $a>0$, then $2at>t$, so $2at+2b\alpha>t$, which is impossible. Then $a=0$ and $b>0$. In this case, we have $2b\alpha<t$, which is a contradiction because $0<t\leq\alpha$. Hence, $D=0$. Therefore, any vector bundle $E\in W_{H_{t}}^{1}(2;\sigma,c_{2})\setminus W_{H_{t}}^{2}(2;\sigma,c_{2})$ sits in a short exact sequence
\begin{align}
    0 \longrightarrow \mathcal{O}_{\Sigma_{n}}\longrightarrow E\longrightarrow \mathcal{I}_{Z}(\sigma)\longrightarrow 0. \label{squence 4.3}
\end{align}
By Proposition \ref{Proposition 4.3.2} we have that $h^{2}(E^{*}\otimes E)=0$ for any vector bundle $E$ in $W_{H_{t}}^{1}(2;\sigma,c_{2})\setminus W_{H_{t}}^{2}(2;\sigma,c_{2})$, and hence, $E$ is a smooth point of $M_{H_{t}}(2;\sigma,c_{2})$. Furthermore, by Proposition \ref{Proposition 4.3.3} we have that $\mu_{E}$ is injective. Therefore, by Corollary \ref{Corollary 4.1}, $W_{H_{t}}^{1}(2;\sigma,c_{2})\setminus W_{H_{t}}^{2}(2;\sigma,c_{2})$ is smooth and of the expected dimension at $E$, which is
\begin{align*}
    \rho_{H_{t}}^{1}(2;\sigma,c_{2})=3c_{2}-1
\end{align*}
for all $E\in W_{H_{t}}^{1}(2;\sigma,c_{2})\setminus W_{H_{t}}^{2}(2;\sigma,c_{2})$.

It remains only to establish the irreducibility of $W_{H_{t}}^{1}(2;\sigma,c_{2})\setminus W_{H_{t}}^{2}(2;\sigma,c_{2})$. As established above, any $E\in W_{H_{t}}^{1}(2;\sigma,c_{2})\setminus W_{H_{t}}^{2}(2;\sigma,c_{2})$ sits in an extension of type \ref{squence 4.3}. Denote by $S$ the family of rank two vector bundles $E$ on $\Sigma_{n}$ given by an extension of type \ref{squence 4.3}.

Let $X:=Hilb^{c_{2}}(\Sigma_{n})\times\Sigma_{n}$, $\pi_{1}:X\longrightarrow Hilb^{c_{2}}(\Sigma_{n})$ be the natural projection to the first coordinate. Let $\mathcal{F}:=\mathcal{H}om_{\mathcal{O}_{X}}(\mathcal{I}_{\mathcal{Z}}(\sigma),\mathcal{O}_{X})$ where  $\mathcal{O}_{X}$ is the structural sheaf of the total space, $\mathcal{Z}\subset X$ is the universal subscheme, $\mathcal{I}_{\mathcal{Z}}$ is the corresponding universal sheaf of ideals and $\mathcal{I}_{\mathcal{Z}}(\sigma):=\mathcal{I}_{\mathcal{Z}}\otimes\pi_{2}^{*}\mathcal{O}_{\Sigma_{n}}(\sigma)$ where $\pi_{2}:X\longrightarrow \Sigma_{n}$ be the natural projection to the second coordinate (see \cite{Timofeeva} for more details above the universal subscheme). Note that, in the sense of the Semicontinuity Theorem (see \cite[Theorem 3.12.8]{Hartshorne}), for $i=1$ we get
\begin{align*}
    h^{1}(Z,\mathcal{F}) & = \text{dim }H^{1}(X_{Z};\mathcal{F}_{Z}) \\
    & = \text{dim }H^{1}(\Sigma_{n};\mathcal{H}om_{\mathcal{O}_{\Sigma_{n}}}(\mathcal{I}_{Z}(\sigma),\mathcal{O}_{\Sigma_{n}}) \\
    & = \text{dim } Ext^{1}(\mathcal{I}_{Z}(\sigma),\mathcal{O}_{\Sigma_{n}}).
\end{align*}
Now, by Serre duality, we get $\text{ext}^{1}(\mathcal{I}_{Z}(\sigma), \mathcal{O}_{\Sigma_{n}})=h^{1}(\mathcal{I}_{Z}(\sigma+K_{\Sigma_{n}}))$. From the short exact sequence 
\begin{align*}
    0\longrightarrow \mathcal{I}_{Z} \longrightarrow \mathcal{O}_{\Sigma_{n}}\longrightarrow \mathcal{O}_{Z}\longrightarrow 0,
\end{align*}
by Lemma \ref{Proposition 1.27} and Serre duality, we have $h^{0}(\mathcal{I}_{Z}(\sigma+K_{\Sigma_{n}}))\leq h^{0}(\mathcal{O}_{\Sigma_{n}}(\sigma+K_{\Sigma_{n}}))=0$, and also $h^{2}(\mathcal{I}_{Z}(\sigma+K_{\Sigma_{n}}))=h^{2}(\mathcal{O}_{\Sigma_{n}}(\sigma+K_{\Sigma_{n}}))=h^{0}(\mathcal{O}_{\Sigma_{n}}(-\sigma))=0$. Therefore 
\begin{align*}
    h^{1}(\mathcal{I}_{Z}(\sigma+K_{\Sigma_{n}}))=-\chi(\mathcal{I}_{Z}(\sigma+K_{\Sigma_{n}}))=|Z|-\chi(\mathcal{O}_{\Sigma_{n}}(\sigma+K_{\Sigma_{n}}))=|Z|=c_{2}.
\end{align*}
Since $\text{ext}^{1}(\mathcal{I}_{Z}(\sigma), \mathcal{O}_{\Sigma_{n}})=|Z|=c_{2}$ is constant on $Hilb^{c_{2}}(\Sigma_{n})$, by Grauert theorem (see \cite[Corollary 3.12.9]{Hartshorne}), we get $\mathcal{V}:=R^{1}{\pi_{1}}_{*}(\mathcal{F})$ is locally free on $Hilb^{c_{2}}(\Sigma_{n})$ and for every $Z\in Hilb^{c_{2}}(\Sigma_{n})$ the natural map $\mathcal{V}\otimes\mathbb{C}(Z)\longrightarrow H^{1}(X_{Z},\mathcal{F}_{Z})$ is an isomorphism; that is, for every $Z\in Hilb^{c_{2}}(\Sigma_{n})$ we have that $\mathcal{V}(Z)\cong Ext^{1}(\mathcal{I}_{Z}(\sigma),\mathcal{O}_{\Sigma_{n}})$, then $S=\mathbb{P}(\mathcal{V})$. Furthermore, since $\mathcal{V}$ is a locally free bundle of rank $c_{2}$ on $Hilb^{c_{2}}(\Sigma_{n})$, its geometric projectivization is, by definition, a projective bundle whose fibers are
\begin{align*}
    S_{Z}=\mathbb{P}(\mathcal{V}(Z))\cong\mathbb{P}(Ext^{1}(\mathcal{I}_{Z}(\sigma),\mathcal{O}_{\Sigma_{n}}))\cong\mathbb{P}^{c_{2}-1}.
\end{align*}
So, $S$ is a $({c_2-1})$-projective bundle on the Hilbert scheme $Hilb^{c_{2}}(\Sigma_{n})$, which parametrizes $0$-dimensional subschemes $Z$ of length $|Z|=c_{2}$. Since $Hilb^{c_{2}}(\Sigma_{n})$ is irreducible, then $S$ is also irreducible. Moreover, since $\text{dim}(Hilb^{c_{2}}(\Sigma_{n}))=2c_{2}$, there are $2c_{2}$ ways to choose $Z$ and $c_{2}-1$ ways to choose the extension, so we have $2c_{2}+c_{2}-1=3c_{2}-1$ ways to choose a projective bundle, therefore $\text{dim}(S)=\rho_{H_{r}}^{1}(2;\sigma,c_{2})=3c_{2}-1$; and there is a modular map $\pi$ from an open dense subset $S'$ of $S$ to $W_{H_{t}}^{1}(2;\sigma,c_{2})\setminus W_{H_{t}}^{2}(2;\sigma,c_{2})$.

Let us take the modular map $\pi:S'\longrightarrow W_{H_{t}}^{1}(2;\sigma,c_{2})\setminus W_{H_{t}}^{2}(2;\sigma,c_{2})$ such that it assigns to each extension the isomorphism class of the resulting vector bundle $E$ of the moduli space. Let $W\subset W_{H_{t}}^{1}(2;\sigma,c_{2})\setminus W_{H_{t}}^{2}(2;\sigma,c_{2})$ be the irreducible component containing the image of $S'$ by $\pi$. Since a general point of $\text{Im}(\pi)$ is a smooth point of $W$, then $W$ has expected dimension $3c_{2}-1$. Since $\text{dim}(W)=\text{dim}(S)$ then $\pi$ is generically finite over an open set $W'$ of $W$; that is, a general point $E\in W'$ has finite fiber. Then $S$ dominates $W$; that is, $\text{Im}(\pi)$ is dense in $W$. Since any isomorphism class $[E]\in W_{H_{t}}^{1}(2;\sigma,c_{2})\setminus W_{H_{t}}^{2}(2;\sigma,c_{2})$ arises as a nontrivial extension of type \ref{squence 4.3} for some $Z\in Hilb^{c_{2}}(\Sigma_{n})$, then $S$ finitely dominates $W_{H_{t}}^{1}(2;\sigma,c_{2})\setminus W_{H_{t}}^{2}(2;\sigma,c_{2})$. Since $W$ is an irreducible component of $W_{H_{t}}^{1}(2;\sigma,c_{2})\setminus W_{H_{t}}^{2}(2;\sigma,c_{2})$ and $S$ dominates both, we have $W=W_{H_{t}}^{1}(2;\sigma,c_{2})\setminus W_{H_{t}}^{2}(2;\sigma,c_{2})$, and therefore, $W_{H_{t}}^{1}(2;\sigma,c_{2})\setminus W_{H_{t}}^{2}(2;\sigma,c_{2})$ is irreducible.

The proof of the irreducibility of the varieties $W_{H}^{k}(2;c_{1},c_{2})\setminus W_{H}^{k+1}(2;c_{1},c_{2})$ in several subsequent cases will be similar, and therefore can be omitted from the respective proofs. 
\end{proof}

The irreducibility test in the previous proposition is based entirely on the same thechnique that appears in \cite[Proposition 4.12]{Costa3} of L. Costa and I. Macías-Tarrío.

\begin{proposition}\label{Proposition 4.7m}
Let $c_{2}=1$, $\Sigma_{0}$ be a Hirzebruch surface and $H_{t}\equiv \alpha\sigma+tf$ an ample divisor on $\Sigma_{0}$ with $0<t\leq\alpha$. Then, $W_{H_{t}}^{2}(2;\sigma,1)\setminus W_{H_{t}}^{3}(2;\sigma,1)$ is smooth and irreducible of the expected dimension, namely $\rho_{H_{t}}^{2}=1$.
\end{proposition}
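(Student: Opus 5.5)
The plan is to show that in this case the Brill--Noether locus $W^{2}_{H_{t}}(2;\sigma,1)$ is the whole moduli space, so the statement reduces to Proposition \ref{Proposition 4.4}. This avoids the injectivity of $\mu_{E}$ for bundles with two sections, which Proposition \ref{Proposition 4.3.3} does not cover.

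First I compute the numerical invariants on $\Sigma_{0}$, where $\sigma^{2}=0$, $c_{1}=\sigma$ and $K_{\Sigma_{0}}=-2\sigma-2f$. Then $c_{1}^{2}=0$ and $c_{1}.K_{\Sigma_{0}}=-2$. Proposition \ref{Proposition 4.4} applies since $-K_{\Sigma_{0}}$ is effective, and gives $\dim M_{\Sigma_{0},H_{t}}(2;\sigma,1)=4\cdot 1-0-3=1$. Corollary \ref{Corollary 4.5} applies because $c_{1}.H_{t}=t>0>2K_{\Sigma_{0}}.H_{t}$. Its formula gives
\begin{align*}
\rho^{2}_{H_{t}}(2;\sigma,1)=1-2\left(2-2+\tfrac{-2}{2}-0+1\right)=1,
\end{align*}
which is the expected dimension in the statement and equals $\dim M_{H_{t}}$.

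Next, Proposition \ref{Proposition 2.4} with $p_{a}(\Sigma_{0})=0$ gives $\chi(E)=2+1+\frac{0-2}{2}=2$ for every $E\in M_{\Sigma_{0},H_{t}}(2;\sigma,1)$. Since $c_{1}=\sigma$ is effective, Proposition \ref{Proposition 4.3.1} gives $h^{2}(E)=0$. Hence $h^{0}(E)\geq \chi(E)=2$ for every $E$ in the moduli space, so
\begin{align*}
\operatorname{Supp} W^{2}_{H_{t}}(2;\sigma,1)=M_{\Sigma_{0},H_{t}}(2;\sigma,1).
\end{align*}

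For non-emptiness I use Theorem \ref{Theorem 3.4}, case (3), with $n=0$ and $c_{2}=1$. It produces an $H_{t}$-stable $E$ fitting in $0\to\mathcal{O}_{\Sigma_{0}}\to E\to\mathcal{I}_{p}(\sigma)\to 0$, and its proof computes $h^{0}(E)=2$. So $M_{\Sigma_{0},H_{t}}(2;\sigma,1)\neq\varnothing$, and by Proposition \ref{Proposition 4.4} it is a smooth irreducible curve. Upper semicontinuity of $h^{0}$ makes $W^{3}_{H_{t}}$ closed, and it is proper because it misses this $E$. Therefore $W^{2}_{H_{t}}\setminus W^{3}_{H_{t}}$ is a non-empty open subset of a smooth irreducible variety of dimension $1$. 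It is thus smooth, irreducible and of dimension $1=\rho^{2}_{H_{t}}$.

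As a consistency check, Corollary \ref{Corollary 4.1} then forces $\mu_{E}$ to be injective at such points. One can also argue as in Proposition \ref{Proposition 4.8}: for a section of $E$, stability forces the divisorial part $D$ of its zero scheme to vanish, because $0<D.H_{t}<t/2$ is impossible when $0<t\leq\alpha$. So every such $E$ is an extension of $\mathcal{I}_{p}(\sigma)$ by $\mathcal{O}_{\Sigma_{0}}$, and Proposition \ref{Proposition 4.3.2} gives $h^{2}(E\otimes E^{*})=0$ directly.

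The main point needing care is the scheme structure, since Corollary \ref{Corollary 4.5} only describes the support of $W^{2}_{H_{t}}$. Because $W^{2}_{H_{t}}$ is the degeneracy locus of a map whose expected codimension $\dim M_{H_{t}}-\rho^{2}_{H_{t}}$ is $0$, it coincides with $M_{H_{t}}$ scheme-theoretically. I expect this to be the only delicate step; if needed, it can be backed up by the Zariski tangent space description of Corollary \ref{Corollary 4.1}.
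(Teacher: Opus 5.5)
Your proof is correct, but it follows a different route from the paper's.

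\textbf{The paper's route.} The paper works pointwise. As in Proposition \ref{Proposition 4.8}, it writes each $E\in W^{2}_{H_{t}}\setminus W^{3}_{H_{t}}$ as an extension of $\mathcal{I}_{Z}(\sigma)$ by $\mathcal{O}_{\Sigma_{0}}$, and gets smoothness of $M$ at $E$ from Proposition \ref{Proposition 4.3.2}. It then notes that $H^{1}(E^{*}\otimes\mathcal{O}_{\Sigma_{0}}(K_{\Sigma_{0}}))\cong H^{1}(E)^{*}=0$, because $h^{0}(E)=2=\chi(E)$ and $h^{2}(E)=0$. So $\mu_{E}$ is trivially injective, and Corollary \ref{Corollary 4.1} gives smoothness of the expected dimension. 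Only for irreducibility does it pass to the global equality $W^{2}_{H_{t}}=M_{\Sigma_{0},H_{t}}(2;\sigma,1)$, using the lower bound of Corollary \ref{Corollary 4.5} and the irreducibility of $M$.

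\textbf{Your route.} You prove that equality first, and more directly: every stable $E$ has $h^{2}(E)=0$, hence $h^{0}(E)\geq\chi(E)=2$. You then read everything off Proposition \ref{Proposition 4.4}. Your irreducibility step (a nonempty open subset of an irreducible curve) is also cleaner than the paper's final sentence, which presupposes the irreducibility it is meant to conclude.

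\textbf{What your route costs.} It hinges on the coincidence $k=\chi(E)$, i.e.\ expected codimension zero. So it does not transfer to the other loci in Section 4, such as $W^{1}_{H_{t}}(2;\sigma,c_{2})$ with $c_{2}\geq 3$, where $\chi(E)=3-n-c_{2}<1$. The $\mu_{E}$ method, by contrast, is uniform across those cases.

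\textbf{The scheme-theoretic step.} It is fine, but it relies on the determinantal construction behind Corollary \ref{Corollary 4.5}, which the paper never spells out. For $k=\chi$ one imposes $(m+1)$-minors on a map from a rank $m+2$ bundle to a rank $m$ bundle, so the ideal is zero. If you want a backup that avoids this, use the paper's observation $h^{1}(E)=h^{0}(E)-\chi(E)=0$ for $E\notin W^{3}_{H_{t}}$. Your ``consistency check'' cannot serve as that backup. It applies Corollary \ref{Corollary 4.1} in the reverse direction, so it presupposes the smoothness rather than proving it.
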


\begin{proof}
Analogously to Proposition \ref{Proposition 4.8}, any vector bundle $E\in W_{H_{t}}^{2}(2;\sigma,1)\setminus W_{H_{t}}^{3}(2;\sigma,1)$ fits into a short exact sequence of type
\begin{align*}
    0 \longrightarrow \mathcal{O}_{\Sigma_{0}}\longrightarrow E\longrightarrow \mathcal{I}_{Z}(\sigma)\longrightarrow 0,
\end{align*}
and also $E$ is a smooth point of $M_{\Sigma_{0},H_{t}}(2;\sigma,1)$.

Now, note that by Serre duality, Proposition \ref{Proposition 4.3.1}, and Proposition \ref{Proposition 2.4}, we have that $H^{1}(E^{*}\otimes\mathcal{O}_{\Sigma_{0}}(K_{\Sigma_{0}}))=\{0\}$; thus, $\mu_{E}$ is trivially injective. Therefore, by Corollary \ref{Corollary 4.1}, $W_{H_{t}}^{2}(2;\sigma,1)\setminus W_{H_{t}}^{3}(2;\sigma,1)$ is smooth and of the expected dimension at $E$, which is $\rho_{H_{t}}^{1}(2;\sigma,1)=1$ for all $E\in W_{H_{t}}^{2}(2;\sigma,1)\setminus W_{H_{t}}^{3}(2;\sigma,1)$. Hence, $W_{H_{t}}^{2}(2;\sigma,1)\setminus W_{H_{t}}^{3}(2;\sigma,1)$ is smooth and of the expected dimension.

It only remains to prove the irreducibility of $W_{H_{t}}^{2}(2;\sigma,1)\setminus W_{H_{t}}^{3}(2;\sigma,1)$. Note that, by Proposition \ref{Proposition 4.4}, $M_{\Sigma_{0},H_{t}}(2;\sigma,1)$ is a smooth irreducible variety of dimension one. Then by Corollary \ref{Corollary 4.5}, $\text{dim}(W_{H_{t}}^{2}(2;\sigma,1))=1$, and since $M_{\Sigma_{0},H_{t}}(2;\sigma,1)$ is irreducible, then necessarily $M_{\Sigma_{0},H_{t}}(2;\sigma,1)=W_{H_{t}}^{2}(2;\sigma,1))$. Similarly, since $W_{H_{t}}^{2}(2;\sigma,1)\setminus W_{H_{t}}^{3}(2;\sigma,1)$ is irreducible, then $M_{\Sigma_{0},H_{t}}(2;\sigma,1)=W_{H_{t}}^{2}(2;\sigma,1)\setminus W_{H_{t}}^{3}(2;\sigma,1)$.
\end{proof}

\begin{proposition}\label{Proposition 4.15}
Let $c_{2}\geq 2$, $\Sigma_{n}$ be a Hirzebruch surface and $H\equiv \alpha\sigma+\beta f$ an ample divisor on $\Sigma_{n}$ with $\beta>\alpha n+(\alpha-1)$. Then $W_{H}^{1}(2;f,c_{2})\setminus W_{H}^{2}(2;f,c_{2})$ is smooth and irreducible of the expected dimension, namely $\rho_{H}^{1}=3c_{2}-1$.
\end{proposition}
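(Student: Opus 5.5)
The plan follows the proof of Proposition \ref{Proposition 4.8}, with $c_{1}=f$ and the polarization $H\equiv\alpha\sigma+\beta f$, $\beta-\alpha n\geq\alpha$.

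\textbf{Step 1: reduce to the standard extension.} Take $E\in W_{H}^{1}(2;f,c_{2})\setminus W_{H}^{2}(2;f,c_{2})$, so $h^{0}(E)=1$. By Remark \ref{Remark 4.5} there is an exact sequence
\[
0\longrightarrow\mathcal{O}_{\Sigma_{n}}(D)\longrightarrow E\longrightarrow\mathcal{I}_{Z}(f-D)\longrightarrow 0
\]
with $D=a\sigma+bf$ effective. I would show that $D=0$. If $D\neq 0$, then $H$-stability gives
\[
0<D.H=a(\beta-\alpha n)+b\alpha<\mu_{H}(E)=\tfrac{\alpha}{2}.
\]
If $a>0$, the left side is at least $\beta-\alpha n\geq\alpha$. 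If $a=0$ and $b>0$, it is at least $\alpha$. Both contradict the bound, so $D=0$. Then $|Z|=c_{2}$ and $E$ fits into
\[
0\longrightarrow\mathcal{O}_{\Sigma_{n}}\longrightarrow E\longrightarrow\mathcal{I}_{Z}(f)\longrightarrow 0 .
\]

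\textbf{Step 2: smoothness and expected dimension.} Proposition \ref{Proposition 4.3.2} with $D_{c_{1}}=0$ gives $h^{2}(E\otimes E^{*})=0$, so $E$ is a smooth point of $M_{\Sigma_{n},H}(2;f,c_{2})$. Since $h^{0}(E)=1$, Proposition \ref{Proposition 4.3.3} gives injectivity of $\mu_{E}$. Corollary \ref{Corollary 4.1} then yields smoothness of the expected dimension at $E$.

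For the number $\rho_{H}^{1}$, we have $f^{2}=0$, $f.K_{\Sigma_{n}}=-2$ and $p_{a}=0$. So $\dim M_{H}=4c_{2}-3$. The correction term is $k-2-1+c_{2}$, which for $k=1$ equals $c_{2}-2$. Hence
\[
\rho_{H}^{1}=4c_{2}-3-(c_{2}-2)=3c_{2}-1 .
\]
Non-emptiness for $c_{2}\geq 2$ comes from case (4) of Theorem \ref{Theorem 3.4}, where $h^{0}(E)=1$.

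\textbf{Step 3: irreducibility.} Parametrize the extensions by $\mathbb{P}(\mathcal{V})$, where $\mathcal{V}=R^{1}{\pi_{1}}_{*}\mathcal{H}om(\mathcal{I}_{\mathcal{Z}}(f),\mathcal{O}_{X})$ over $Hilb^{c_{2}}(\Sigma_{n})$. The main obstacle is checking that $\mathrm{ext}^{1}(\mathcal{I}_{Z}(f),\mathcal{O})=h^{1}(\mathcal{I}_{Z}(f+K_{\Sigma_{n}}))$ is constant. Here $f+K_{\Sigma_{n}}=-2\sigma-(n+1)f$, so $h^{0}=0$. By duality $h^{2}=h^{0}(\mathcal{O}(-f))=0$. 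Lemma \ref{Lemma 1.23} with $a=-2$ gives $h^{1}(\mathcal{O}(f+K_{\Sigma_{n}}))=h^{1}(\mathcal{O}_{\mathbb{P}^{1}}(-1))=0$, so $\chi(\mathcal{O}(f+K_{\Sigma_{n}}))=0$, which matches Proposition \ref{Proposition 2.3}. Therefore
\[
h^{1}(\mathcal{I}_{Z}(f+K_{\Sigma_{n}}))=|Z|=c_{2}.
\]
Grauert's theorem then makes $\mathcal{V}$ locally free of rank $c_{2}$, so $S=\mathbb{P}(\mathcal{V})$ is an irreducible $\mathbb{P}^{c_{2}-1}$-bundle of dimension $3c_{2}-1$. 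The open set of stable extensions with $h^{0}=1$ maps onto $W_{H}^{1}\setminus W_{H}^{2}$ by Step 1, so $W_{H}^{1}\setminus W_{H}^{2}$ is irreducible, as in Proposition \ref{Proposition 4.8}.
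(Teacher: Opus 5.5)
Your proposal is correct and follows the paper's route: reduce to $D=0$ via $H$-stability, use Propositions \ref{Proposition 4.3.2}, \ref{Proposition 4.3.3} and Corollary \ref{Corollary 4.1} for smoothness of the expected dimension, then run the projective-bundle-over-$Hilb^{c_{2}}(\Sigma_{n})$ argument of Proposition \ref{Proposition 4.8}; your observation that $\beta-\alpha n\geq\alpha$ holds for every $\alpha\geq 1$ settles the $D=0$ step uniformly, so the paper's split into $\alpha=1$ and $\alpha>1$ is unnecessary. One technical correction, inherited from the paper: since $\mathcal{H}om(\mathcal{I}_{\mathcal{Z}}(f),\mathcal{O}_{X})\cong\pi_{2}^{*}\mathcal{O}_{\Sigma_{n}}(-f)$ and $H^{1}(\mathcal{O}_{\Sigma_{n}}(-f))=0$, your $\mathcal{V}$ as written is zero; it should be the relative sheaf $\mathcal{E}xt^{1}_{\pi_{1}}(\mathcal{I}_{\mathcal{Z}}(f),\mathcal{O}_{X})$, whose fibres are the groups $\mathrm{Ext}^{1}(\mathcal{I}_{Z}(f),\mathcal{O}_{\Sigma_{n}})$ that you correctly showed to have constant dimension $c_{2}$ (base change applies because $\mathrm{Hom}$ and $\mathrm{Ext}^{2}$ vanish for every $Z$).
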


\begin{proof}
The idea behind the proof is analogous to Proposition \ref{Proposition 4.8}. To prove that the divisor $D$ initiating the extension of $E$ is trivial, consider the cases $\alpha=1$ and $\alpha>1$.
\end{proof}

\begin{proposition}
Let $c_{2}=1$ and $\Sigma_{n}$ be a Hirzebruch surface and $H\equiv \alpha\sigma+\beta f$ an ample divisor on $\Sigma_{n}$ with $\beta>\alpha n+(\alpha-1)$. Then, $W_{H}^{2}(2;f,1)\setminus W_{H}^{3}(2;f,1)$ is smooth and irreducible of the expected dimension, namely $\rho_{H}^{2}=1$.
\end{proposition}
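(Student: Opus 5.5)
I will follow Proposition~\ref{Proposition 4.7m} closely, with $c_{1}=f$ in place of $\sigma$ and case (4) of Theorem~\ref{Theorem 3.4} in place of case (3).

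\emph{Step 1: every bundle in the locus sits in the expected extension.} Take $E\in W^{2}_{H}(2;f,1)\setminus W^{3}_{H}(2;f,1)$ and a nonzero section. Remark~\ref{Remark 4.5} gives an extension
\[
0\to\mathcal{O}_{\Sigma_{n}}(D)\to E\to\mathcal{I}_{Z}(f-D)\to 0
\]
with $D=a\sigma+bf$ effective. I will show $D=0$ using $H$-stability, which gives
\[
D.H=a(\beta-\alpha n)+b\alpha<\mu_{H}(E)=\tfrac{\alpha}{2}.
\]
Since $\beta>\alpha n+(\alpha-1)$, we have $\beta-\alpha n\geq\alpha$. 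So $a>0$ would force $D.H\geq\alpha$, and $b>0$ would force $D.H\geq\alpha$; both are impossible. Hence $D=0$, $|Z|=1$, and $E$ is an extension of $\mathcal{I}_{p}(f)$ by $\mathcal{O}_{\Sigma_{n}}$. The case $\alpha=1$ is included in this argument, as in Proposition~\ref{Proposition 4.15}.

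\emph{Step 2: smoothness of $M$ at $E$.} Proposition~\ref{Proposition 4.3.2} with $c_{1}=f$ and $D_{c_{1}}=0$ gives $h^{2}(E\otimes E^{*})=0$, so $E$ is a smooth point of $M_{\Sigma_{n},H}(2;f,1)$.

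\emph{Step 3: injectivity of $\mu_{E}$.} I claim $H^{1}(E^{*}\otimes\mathcal{O}_{\Sigma_{n}}(K_{\Sigma_{n}}))\cong H^{1}(E)^{*}$ vanishes, which makes $\mu_{E}$ trivially injective.
\begin{itemize}
\item Proposition~\ref{Proposition 2.4} with $c_{1}^{2}=0$, $c_{1}.K_{\Sigma_{n}}=-2$, $p_{a}=0$ and $c_{2}=1$ gives $\chi(E)=2+1-1=2$.
\item Proposition~\ref{Proposition 4.3.1} gives $h^{2}(E)=0$.
\item Since $E\notin W^{3}_{H}$ and the computation in case (4) of Theorem~\ref{Theorem 3.4} gives $h^{0}(E)=2$, we get $h^{1}(E)=h^{0}(E)-\chi(E)=0$.
\end{itemize}
Corollary~\ref{Corollary 4.1} then shows that $W^{2}_{H}$ is smooth of the expected dimension at $E$. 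From Proposition~\ref{Proposition 4.4} and Corollary~\ref{Corollary 4.5}, that dimension is
\[
\rho^{2}_{H}=\dim M-2\bigl(2-\chi(E)\bigr)=(4-0-3)-0=1.
\]

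\emph{Step 4: irreducibility.} Proposition~\ref{Proposition 4.4} says $M_{\Sigma_{n},H}(2;f,1)$ is smooth and irreducible of dimension $1$, and case (4) of Theorem~\ref{Theorem 3.4} shows it is nonempty. Every $E$ in $M$ has $h^{2}(E)=0$ and $\chi(E)=2$, hence $h^{0}(E)\geq 2$, so $W^{2}_{H}(2;f,1)=M$. Next I show $W^{3}_{H}=\varnothing$. If $h^{0}(E)\geq 3$, Step 1 applies to $E$ unchanged (it only used a nonzero section and stability), so $E$ is an extension of $\mathcal{I}_{p}(f)$ by $\mathcal{O}_{\Sigma_{n}}$. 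Then
\[
h^{0}(E)\leq 1+h^{0}(\mathcal{I}_{p}(f))=2,
\]
a contradiction. Therefore $W^{2}_{H}\setminus W^{3}_{H}=M$, which is irreducible.

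\emph{Main obstacle.} The delicate point is Step 3. The vanishing of $H^{1}(E^{*}(K_{\Sigma_{n}}))$ depends on $h^{0}(E)$ being exactly $2$, and this is where the exclusion of $W^{3}_{H}$, obtained from Step 1, is used.
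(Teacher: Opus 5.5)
Your proof is correct. Steps 1--3 match what the paper intends. The paper says only that injectivity of $\mu_E$ goes as in Proposition~\ref{Proposition 4.7m}, i.e.\ vanishing of $H^{1}(E^{*}\otimes\mathcal{O}_{\Sigma_{n}}(K_{\Sigma_{n}}))$ via Serre duality, Proposition~\ref{Proposition 4.3.1} and Proposition~\ref{Proposition 2.4}. Everything else it calls analogous to Proposition~\ref{Proposition 4.8}: $D=0$ by stability, and smoothness of $M$ at $E$ by Proposition~\ref{Proposition 4.3.2}.

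The genuine difference is in irreducibility.
\begin{itemize}
\item The paper's reference to Proposition~\ref{Proposition 4.8} means parametrizing the extensions $0\to\mathcal{O}_{\Sigma_{n}}\to E\to\mathcal{I}_{p}(f)\to 0$ by a projective bundle over $Hilb^{1}(\Sigma_{n})$. Here $\mathrm{ext}^{1}(\mathcal{I}_{p}(f),\mathcal{O}_{\Sigma_{n}})=h^{1}(\mathcal{I}_{p}(f+K_{\Sigma_{n}}))=1$, so that family is just $\Sigma_{n}$, of dimension $2>\rho^{2}_{H}=1$. The fibres of the classifying map are the lines $\mathbb{P}(H^{0}(E))$. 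So the ``$\dim S=\dim W$, hence generically finite'' step of Proposition~\ref{Proposition 4.8} does not transfer verbatim. Irreducibility there must come solely from an irreducible family surjecting onto the locus.
\item You instead combine three facts: the irreducibility of $M_{\Sigma_{n},H}(2;f,1)$ from Proposition~\ref{Proposition 4.4}; $W^{2}_{H}=M$, from $\chi(E)=2$ and $h^{2}(E)=0$; and $W^{3}_{H}=\varnothing$, from Step 1 and $h^{0}(\mathcal{I}_{p}(f))=1$.
\end{itemize}
Your route is shorter, avoids the dimension mismatch, and proves more: the locus is the whole moduli space, which nearly makes Step 3 redundant. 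Its cost is reliance on the global irreducibility of $M$, which the Hilbert-scheme route does not need. It also argues directly the emptiness of $W^{3}_{H}$, which the analogous Proposition~\ref{Proposition 4.7m} does not justify explicitly.

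A minor point: in Step 3 you need neither case (4) of Theorem~\ref{Theorem 3.4} nor $W^{3}_{H}=\varnothing$ to get $h^{0}(E)=2$. That is exactly what $E\in W^{2}_{H}\setminus W^{3}_{H}$ means, so the ``main obstacle'' you flag is not really one.
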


\begin{proof}
The proof of the injectivity of $\mu_{E}$ is similar to the proof of Proposition \ref{Proposition 4.7m}. The rest of the proof is analogous to Proposition \ref{Proposition 4.8}.
\end{proof}

\begin{proposition}\label{Proposition 4.12}
Let $c_{2}=2$ and $\Sigma_{0}$ be a Hirzebruch surface and $H_{r}\equiv \alpha\sigma+\beta_{r} f$ an ample divisor on $\Sigma_{0}$ where $\beta_{r}=\alpha+r$ with $0<r\leq\alpha$. Then, $W_{H_{r}}^{3}(2;\sigma+f,2)\setminus W_{H_{r}}^{4}(2;\sigma+f,2)$ is smooth and irreducible of the expected dimension $\rho_{H_{r}}^{3}=3$.
\end{proposition}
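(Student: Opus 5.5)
The idea is to show that for these invariants the Brill--Noether locus $W^{3}_{H_{r}}$ is the whole moduli space, so that $W^{3}_{H_{r}}(2;\sigma+f,2)\setminus W^{4}_{H_{r}}(2;\sigma+f,2)$ is a nonempty open subset of $M_{\Sigma_{0},H_{r}}(2;\sigma+f,2)$. Smoothness, irreducibility and dimension would then be inherited from Proposition \ref{Proposition 4.4}, and the injectivity of $\mu_{E}$ would be trivial.

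\textbf{Numerics.} On $\Sigma_{0}$ we have $K_{\Sigma_{0}}=-2\sigma-2f$ and $p_{a}=0$. With $c_{1}=\sigma+f$ this gives $c_{1}^{2}=2$ and $c_{1}.K_{\Sigma_{0}}=-4$. By Proposition \ref{Proposition 4.4},
\begin{align*}
\dim M_{\Sigma_{0},H_{r}}(2;\sigma+f,2)=4\cdot 2-2-3=3.
\end{align*}
The bracket in Corollary \ref{Corollary 4.5} equals $k-2-2-1+2=k-3$, so $\rho^{3}_{H_{r}}=3-3\cdot 0=3$. By Proposition \ref{Proposition 2.4}, $\chi(E)=2+2-1=3$. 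The hypothesis $c_{1}.H\geq 2K.H$ of Corollary \ref{Corollary 4.5} clearly holds, since $c_{1}$ is effective and $-K_{\Sigma_{0}}$ is effective.

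\textbf{$W^{3}$ is everything.} Since $c_{1}$ is effective, Proposition \ref{Proposition 4.3.1} gives $h^{2}(E)=0$ for every $E\in M_{\Sigma_{0},H_{r}}(2;\sigma+f,2)$. Hence $h^{0}(E)=3+h^{1}(E)\geq 3$, and so $W^{3}_{H_{r}}(2;\sigma+f,2)=M_{\Sigma_{0},H_{r}}(2;\sigma+f,2)$.

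\textbf{Nonemptiness.} Case (5) of Theorem \ref{Theorem 3.4} with $n=0$ and $c_{2}=2$ produces an $H_{r}$-stable bundle $E$ with $h^{0}(E)=3$. Therefore $W^{3}_{H_{r}}\setminus W^{4}_{H_{r}}$ is nonempty. It is open, because $W^{4}_{H_{r}}$ is closed (it is determinantal). By Proposition \ref{Proposition 4.4}, $M_{\Sigma_{0},H_{r}}(2;\sigma+f,2)$ is smooth and irreducible of dimension $3$. A nonempty open subset of it is therefore smooth, irreducible and of dimension $3=\rho^{3}_{H_{r}}$.

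\textbf{Consistency with Corollary \ref{Corollary 4.1}.} For $E\in W^{3}_{H_{r}}\setminus W^{4}_{H_{r}}$ we have $h^{0}(E)=3$, hence $h^{1}(E)=0$. By Serre duality $H^{1}(E^{*}\otimes\mathcal{O}_{\Sigma_{0}}(K_{\Sigma_{0}}))\cong H^{1}(E)^{*}=0$, so $\mu_{E}$ has zero source and is trivially injective, as in Proposition \ref{Proposition 4.7m}.

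The only points needing care are the arithmetic of $\chi$ and $\rho^{3}_{H_{r}}$, and citing Proposition \ref{Proposition 4.4} correctly. Its hypotheses hold because $\Sigma_{0}$ is rational with effective anticanonical divisor. I do not expect a genuine obstacle.
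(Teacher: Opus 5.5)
Your proof is correct, and it takes a different route from the paper's.

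The paper argues bundle by bundle. It writes each $E\in W^{3}_{H_r}\setminus W^{4}_{H_r}$ as an extension $0\to\mathcal{O}_{\Sigma_0}(f)\to E\to\mathcal{I}_{p}(\sigma)\to 0$. It gets injectivity of $\mu_E$ from $H^{1}(E^{*}\otimes\mathcal{O}_{\Sigma_0}(K_{\Sigma_0}))=0$, as in Proposition \ref{Proposition 4.7m}. For irreducibility it defers to the extension-family argument of Proposition \ref{Proposition 4.8}. Along the way it explicitly sets aside bundles presented as $0\to\mathcal{O}_{\Sigma_0}\to E\to\mathcal{I}_Z(\sigma+f)\to 0$ with $|Z|=2$.

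You argue globally instead:
\begin{itemize}
\item $\chi(E)=3$ and $h^{2}(E)=0$ force $W^{3}_{H_r}=M_{\Sigma_0,H_r}(2;\sigma+f,2)$.
\item Hence $W^{3}_{H_r}\setminus W^{4}_{H_r}$ is the open locus $\{h^{1}(E)=0\}$, which is nonempty by Theorem \ref{Theorem 3.4}(5).
\item Smoothness, irreducibility and dimension are then inherited from Proposition \ref{Proposition 4.4}.
\item Corollary \ref{Corollary 4.1} (with $\mu_E$ having zero source) settles smoothness for the determinantal scheme structure.
\end{itemize}

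What your route buys:
\begin{itemize}
\item You need no case distinction on how a section vanishes. The paper's exclusion of the $D=0$ presentations is not really justified, since every $E$ in the locus is stable by hypothesis.
\item You get irreducibility and the dimension directly. The paper's extension family cannot supply them here. The point $p$ moves in a surface, and $\mathrm{ext}^{1}(\mathcal{I}_p(\sigma),\mathcal{O}_{\Sigma_0}(f))=h^{1}(\mathcal{I}_p(-\sigma-3f))=1$, so that family is only $2$-dimensional. It cannot dominate a $3$-dimensional locus, and the Proposition \ref{Proposition 4.8}-style identification $\dim S=\rho$ is unavailable.
\end{itemize}

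What your route costs: it leans on the external irreducibility statement packaged in Proposition \ref{Proposition 4.4}. It also works only because $\rho^{3}_{H_r}=\dim M$ in this numerical case. Where the paper's method does apply, it gives an explicit description of the locus via extensions.
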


\begin{proof}
Any vector bundle $E\in W_{H_{r}}^{3}(2;\sigma+f,2)\setminus W_{H_{r}}^{4}(2;\sigma+f,2)$ sits in a short exact sequence 
\begin{align}
    0 \longrightarrow \mathcal{O}_{\Sigma_{0}}(f)\longrightarrow E\longrightarrow \mathcal{I}_{Z}(\sigma)\longrightarrow 0, \label{sequence 4.8.1}
\end{align}
where $Z$ is a locally complete intersection $0$-cycle of length $|Z|=1$ or in a short exact sequence 
\begin{align}
    0 \longrightarrow \mathcal{O}_{\Sigma_{0}}\longrightarrow E\longrightarrow \mathcal{I}_{Z}(\sigma+f)\longrightarrow 0, \label{sequence 4.8.2}
\end{align}
where $|Z|=2$.

Note that, in this last case, $E$ should belong to $M_{H_{r}}(2;\sigma+f,2)$. However, we have not proved that a vector bundle $E$ arising from a short exact sequence of the above form is indeed $H_{r}$-stable. We only have such a result for extensions of this type when $c_{2}\geq 5$ (see Theorem \ref{Theorem 3.4}-(6)), and therefore, we exclude this case.

Assume that $E$ fits into a short exact sequence of type \ref{sequence 4.8.1}. The proof of the injectivity of $\mu_{E}$ is similar to the proof of Proposition \ref{Proposition 4.7m}. The rest of the proof is analogous to Proposition \ref{Proposition 4.8}.
\end{proof}

\begin{proposition}
Let $c_{2}=5$ and $\Sigma_{0}$ be a Hirzebruch surface and $H_{r}\equiv \alpha\sigma+\beta_{r} f$ an ample divisor on $\Sigma_{0}$ where $\beta_{r}=\alpha+r$ with $0<r\leq\alpha$. Then, $W_{H_{r}}^{2}(2;\sigma+f,5)\setminus W_{H_{r}}^{3}(2;\sigma+f,5)$ is smooth and irreducible of the expected dimension $\rho_{H_{r}}^{2}=11$.
\end{proposition}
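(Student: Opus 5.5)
The plan is to imitate Proposition \ref{Proposition 4.8}, with Theorem \ref{Theorem 4.6} replacing Proposition \ref{Proposition 4.3.3}, since now $h^{0}(E)=2$. First, the numerics. On $\Sigma_{0}$ we have $c_{1}^{2}=(\sigma+f)^{2}=2$ and $c_{1}.K_{\Sigma_{0}}=-4$. Proposition \ref{Proposition 4.4} then gives $\dim M_{\Sigma_{0},H_{r}}(2;\sigma+f,5)=20-2-3=15$, and Proposition \ref{Proposition 2.4} gives $\chi(E)=2+2-4=0$. Hence Corollary \ref{Corollary 4.5} gives $\rho^{2}_{H_{r}}=15-2(2-0)=11$, as claimed. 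Non-emptiness comes from Theorem \ref{Theorem 3.4}-(5) with $n=0$, $c_{2}=5\geq 3$: there $D=f$, $|Z|=4$ and $h^{0}(E)=2$.

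Next I would pin down the extension type. Take $E\in W^{2}_{H_{r}}\setminus W^{3}_{H_{r}}$ and $0\neq s\in H^{0}(E)$. By Remark \ref{Remark 4.5} we get $0\to\mathcal{O}_{\Sigma_{0}}(D)\to E\to\mathcal{I}_{Z}(\sigma+f-D)\to 0$ with $D=a\sigma+bf$ effective. Here $\sigma.H_{r}=\alpha+r$ and $f.H_{r}=\alpha$. So $H_{r}$-stability forces
\begin{align*}
a(\alpha+r)+b\alpha<\alpha+\tfrac{r}{2},
\end{align*}
and with $0<r\leq\alpha$ this leaves only $D\in\{0,f\}$.

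If $D=f$, then $|Z|=4$ and $h^{0}(E)=2+h^{0}(\mathcal{I}_{Z}(\sigma))$. Thus $h^{0}(E)=2$ exactly when $Z$ imposes independent conditions on $|\sigma|$.

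If $D=0$, then $|Z|=5$ and $h^{0}(E)=1+h^{0}(\mathcal{I}_{Z}(\sigma+f))$. So $Z$ must lie on a (unique) curve $C\in|\sigma+f|$. This is the step I expect to be the main obstacle: I must show this case produces nothing new. My first attempt is to use the pencil $H^{0}(E)\otimes\mathcal{O}\to E$. Its degeneracy locus is a curve in $|c_{1}|=|\sigma+f|$, and I would try to show that some section vanishes along a fibre, i.e.\ that $E$ contains $\mathcal{O}(f)$, which reduces to the case $D=f$. If that fails, the fallback is a dimension count: such $Z$ form an $8$-dimensional family (choose $C$ in a $3$-dimensional system and $5$ points on it), and the extensions form a $\mathbb{P}^{4}$. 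I would then show that the generic such extension is not $H_{r}$-stable, or that its image lies in the closure of the $D=f$ locus, so it does not create a second component.

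For smoothness, take $E$ in the $D=f$ family. Proposition \ref{Proposition 4.3.2} with $D_{c_{1}}=f$ gives $h^{2}(E\otimes E^{*})=0$, so $E$ is a smooth point of $M_{H_{r}}$. For injectivity of $\mu_{E}$ I apply Theorem \ref{Theorem 4.6} with $D=f$ ($a=0$, $b=1$), checking its hypotheses:
\begin{itemize}
\item $f$ is base-point-free with $h^{0}(\mathcal{O}(f))=2$;
\item $h^{1}(\mathcal{O}(2f))=0$ by Lemma \ref{Lemma 1.23};
\item $2(b+1)=4>d=1$;
\item $|Z|=c_{2}-f.\sigma=4=h^{0}(\mathcal{O}_{\Sigma_{0}}(\sigma+f))$;
\item $h^{1}(\mathcal{O}(\sigma+f))=0$.
\end{itemize}
Theorem \ref{Theorem 4.6} needs $Z$ general, so for arbitrary $Z$ with $h^{0}(\mathcal{I}_{Z}(\sigma))=0$ I would rerun its proof. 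In that proof, generality is only used to get $h^{0}(\mathcal{I}_{Z}(\sigma))=0$, which here follows from $h^{0}(E)=2$, and to get $h^{0}(\mathcal{I}_{Z}(\sigma+f))=0$, which I would verify directly or else restrict to an open dense set. Corollary \ref{Corollary 4.1} then yields smoothness of expected dimension $11$.

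For irreducibility, follow Proposition \ref{Proposition 4.8} with $\mathcal{V}=R^{1}{\pi_{1}}_{*}\mathcal{H}om(\mathcal{I}_{\mathcal{Z}}(\sigma),\mathcal{O}(f))$ over the open subset of $Hilb^{4}(\Sigma_{0})$ where $h^{0}(\mathcal{I}_{Z}(\sigma))=0$. By Serre duality,
\begin{align*}
\mathrm{ext}^{1}(\mathcal{I}_{Z}(\sigma),\mathcal{O}(f))=h^{1}(\mathcal{I}_{Z}(-\sigma-3f))=|Z|-\chi(\mathcal{O}(-\sigma-3f))=4,
\end{align*}
where the extreme cohomologies vanish by Lemma \ref{Lemma 1.23} and Serre duality. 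This is constant, so by Grauert $\mathcal{V}$ is locally free and $\mathbb{P}(\mathcal{V})$ is an irreducible $\mathbb{P}^{3}$-bundle of dimension $8+3=11$. It dominates $W^{2}_{H_{r}}\setminus W^{3}_{H_{r}}$, given the exclusion of the $D=0$ case. Therefore $W^{2}_{H_{r}}\setminus W^{3}_{H_{r}}$ is irreducible.
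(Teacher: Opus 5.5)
Your numerics, the reduction to $D\in\{0,f\}$ and the treatment of the $D=f$ family are fine. The genuine gap is the case $D=0$, which you flag but do not close, and neither of your fixes can work.

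\textbf{The $D=0$ locus is disjoint from the $D=f$ locus.} If $D=0$, then $Z\subset C$ for the unique $C\in|\sigma+f|$ through $Z$. For a second section $t$, the section $s\wedge t\in H^{0}(\mathcal{O}_{\Sigma_{0}}(\sigma+f))$ is the image of $t$ in $\mathcal{I}_{Z}(\sigma+f)$, i.e.\ the nonzero equation of $C$. Suppose some section vanished along a curve of $|f|$. Then $\mathcal{O}_{\Sigma_{0}}(f)\subset E$ and $H^{0}(E)=H^{0}(\mathcal{O}_{\Sigma_{0}}(f))$, both being $2$-dimensional. All sections would then lie in a rank-one subsheaf, forcing $s\wedge t=0$. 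So the pencil idea fails.

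\textbf{The $D=0$ locus is stable and not small.} Take $C$ smooth and $Z$ five distinct points of $C$.
A locally free extension exists because $h^{0}(\mathcal{O}_{\Sigma_{0}}(-\sigma-f))=0$. Moreover $h^{0}(E)=1+h^{0}(\mathcal{I}_{Z}(\sigma+f))=2$, since any other conic meets $C$ in length $2$.
If $\mathcal{O}_{\Sigma_{0}}(J)$ maps nontrivially to $\mathcal{I}_{Z}(\sigma+f)$ with $J.H_{r}\geq\alpha+\tfrac{r}{2}$, then effectivity of $\sigma+f-J$ and $0<r\leq\alpha$ force $J\in\{\sigma,\sigma+f\}$. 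The case $J=\sigma+f$ needs $Z=\varnothing$. The case $J=\sigma$ needs $Z$ on a curve of $|f|$, which is impossible since $C.f=1$. So $E$ is $H_{r}$-stable with $h^{0}(E)=2$.
In your count, $8+4=12$ must be lowered by $1$. By stability and the disjointness remark, every $[s']\in\mathbb{P}(H^{0}(E))\cong\mathbb{P}^{1}$ vanishes in codimension two, on a subscheme of the same conic $\{s\wedge t=0\}$. So each $E$ has a $\mathbb{P}^{1}$ of presentations.
These bundles therefore form an $11$-dimensional family disjoint from the irreducible $11$-dimensional $D=f$ family. They cannot lie in its closure, whose boundary has dimension at most $10$. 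Hence $W^{2}_{H_{r}}(2;\sigma+f,5)\setminus W^{3}_{H_{r}}(2;\sigma+f,5)$ has at least two $11$-dimensional components, and irreducibility cannot be proved.

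The paper disposes of $D=0$ by applying Proposition~\ref{Proposition 1.27} to get $h^{0}(\mathcal{I}_{Z}(\sigma+f))=0$. That proposition concerns a general $Z$, whereas here $Z$ is the zero scheme of a section of an arbitrary $E$. In fact $E\in W^{2}_{H_{r}}$ forces $h^{0}(\mathcal{I}_{Z}(\sigma+f))=1$. So your suspicion of this step was right, and the statement fails as formulated.

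A secondary point: your smoothness argument does not cover all of the $D=f$ locus either. The proof of Theorem~\ref{Theorem 4.6} uses $h^{1}(E(f))=h^{1}(\mathcal{I}_{Z}(\sigma+f))=h^{0}(\mathcal{I}_{Z}(\sigma+f))=0$. This fails when the four points of $Z$ lie on a conic, and $h^{0}(E)=2$ only yields $h^{0}(\mathcal{I}_{Z}(\sigma))=0$.

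What your argument, and the paper's, actually proves is that the closure of the $D=f$ locus is an irreducible component of $W^{2}_{H_{r}}$ of dimension $11$, smooth at its general point.
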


\begin{proof}
Any vector bundle $E\in W_{H_{r}}^{2}(2;\sigma+f,5)\setminus W_{H_{r}}^{3}(2;\sigma+f,5)$ sits in a short exact sequence of type
\begin{align}
    0 \longrightarrow \mathcal{O}_{\Sigma_{0}}(f)\longrightarrow E\longrightarrow \mathcal{I}_{Z}(\sigma)\longrightarrow 0, \label{Example 4.13.4.8}
\end{align}
where $Z$ is a locally complete intersection $0$-cycle of length $|Z|=4$ or in a short exact sequence
\begin{align}
    0 \longrightarrow \mathcal{O}_{\Sigma_{0}}\longrightarrow E\longrightarrow \mathcal{I}_{Z}(\sigma+f)\longrightarrow 0, \label{Example 4.13.4.9}
\end{align}
where $|Z|=5$.

Suppose that $E$ fits into a short exact sequence of type \ref{Example 4.13.4.9}. Since $h^{0}(\mathcal{O}_{\Sigma_{0}}(\sigma+f))=4$ and $|Z|=5$, we have that $h^{0}(\mathcal{I}_{Z}(\sigma+f))=0$ by Proposition \ref{Proposition 1.27}, and then $h^{0}(E)=1$ which is impossible since $E\in W_{H_{r}}^{2}(2;\sigma+f,5)\setminus W_{H_{r}}^{3}(2;\sigma+f,5)$. Then $E$ fits into a short exact sequence of the form \ref{Example 4.13.4.8}.

By Theorem \ref{Theorem 4.6}, the map $\mu_{E}$ is injective because $h^{1}(\mathcal{O}_{\Sigma_{0}}(\sigma+f))=0$. The rest of the proof is similar to Proposition \ref{Proposition 4.8}.
\end{proof}

\begin{proposition}
Let $c_{2}\geq 2$. If $(c_{2},n)\in\{(c_{2},1),(3,2)\}$, $\Sigma_{n}$ be a Hirzebruch surface and $H_{r}\equiv \alpha\sigma+\beta_{r} f$ an ample divisor on $\Sigma_{n}$ where $\beta_{r}=\alpha+r$ with $0<r\leq \alpha$. Then, $W_{H_{r}}^{2}(2;\sigma+f,c_{2})\setminus W_{H_{r}}^{3}(2;\sigma+f,c_{2})$ is smooth and irreducible of the expected dimension $\rho_{H_{r}}^{2}=2c_{2}-n+1$.
\end{proposition}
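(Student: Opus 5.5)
The plan follows the template of Proposition \ref{Proposition 4.8}. First I check the expected dimension. Using $c_{1}^{2}=(\sigma+f)^{2}=2-n$ and $c_{1}.K_{\Sigma_{n}}=n-4$, Proposition \ref{Proposition 4.4} gives $\dim M_{H_{r}}=4c_{2}+n-5$. Corollary \ref{Corollary 4.5} with $k=2$ then gives
\begin{align*}
\rho^{2}_{H_{r}}=4c_{2}+n-5-2(c_{2}+n-3)=2c_{2}-n+1,
\end{align*}
as claimed.

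Next, take $E\in W^{2}_{H_{r}}(2;\sigma+f,c_{2})\setminus W^{3}_{H_{r}}(2;\sigma+f,c_{2})$ and $0\neq s\in H^{0}(E)$. Remark \ref{Remark 4.5} gives an extension
\begin{align*}
0\longrightarrow\mathcal{O}_{\Sigma_{n}}(D)\longrightarrow E\longrightarrow\mathcal{I}_{Z}(\sigma+f-D)\longrightarrow 0
\end{align*}
with $D=a\sigma+bf$ effective. As in Proposition \ref{Proposition 4.8}, $H_{r}$-stability gives $D.H_{r}<\mu_{H_{r}}(E)$. Comparing $a(\beta_{r}-\alpha n)+b\alpha$ with $\tfrac12((\beta_{r}-\alpha n)+\alpha)$ should leave only $D\in\{0,f\}$, and possibly $D=\sigma$ in the boundary case $n=1$, which I would exclude by a direct slope estimate.

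If $D=0$, then $h^{0}(E)=1+h^{0}(\mathcal{I}_{Z}(\sigma+f))$ with $h^{0}(\mathcal{O}_{\Sigma_{n}}(\sigma+f))=4-n$. Having $h^{0}(E)=2$ forces $Z$ to lie on a pencil-type curve in $|\sigma+f|$. For the admissible pairs $(c_{2},n)$ I would rule this out, or handle it with the same injectivity argument as Proposition \ref{Proposition 4.3.3}. That argument is formal once $h^{2}(E(-c_{1}))=0$, but here $h^{0}(E)=2$, so it has to be adapted. This step is where the hypotheses on $(c_{2},n)$ should enter.

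The main case is $D=f$. Here $|Z|=c_{2}-1$ and $h^{0}(E)=2+h^{0}(\mathcal{I}_{Z}(\sigma))$, so $h^{0}(E)=2$ exactly when $Z\not\subset\sigma$, since $h^{0}(\mathcal{O}_{\Sigma_{n}}(\sigma))=1$ for $n\geq1$. Smoothness of $M_{H_{r}}$ at $E$ follows from Proposition \ref{Proposition 4.3.2}, taking $D_{c_{1}}=f$. For injectivity of $\mu_{E}$ I would invoke Theorem \ref{Theorem 4.6} with $D=f$:
\begin{itemize}
\item $f$ is base-point-free and $h^{0}(\mathcal{O}_{\Sigma_{n}}(f))=2$;
\item $h^{1}(\mathcal{O}_{\Sigma_{n}}(2f))=0$ by Lemma \ref{Lemma 1.23};
\item the inequality $2(b+1)=4>1$ holds;
\item $h^{1}(\mathcal{O}_{\Sigma_{n}}(\sigma+f))=0$.
\end{itemize}
The remaining hypothesis is $|Z|=h^{0}(\mathcal{O}_{\Sigma_{n}}(\sigma+f))=4-n$, that is, $c_{2}=5-n$. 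This holds for $(3,2)$, and for $n=1$ exactly when $c_{2}=4$. For other $c_{2}$ with $n=1$, I would rerun the proof of Theorem \ref{Theorem 4.6}. Injectivity of $\varphi$ needs no hypothesis on $|Z|$. Injectivity of $\tau$ reduces to $h^{1}(E(f))=h^{1}(\mathcal{I}_{Z}(\sigma+f))=0$, which holds for general $Z$ with $|Z|\leq h^{0}(\mathcal{O}_{\Sigma_{n}}(\sigma+f))$ because general points impose independent conditions. When $|Z|$ is larger, I expect to need a direct analysis of the cup product. This is the step I consider the main obstacle. Corollary \ref{Corollary 4.1} then gives smoothness of the expected dimension $\rho^{2}_{H_{r}}$.

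For irreducibility, I would copy the Grauert argument of Proposition \ref{Proposition 4.8}. I would compute
\begin{align*}
\text{ext}^{1}(\mathcal{I}_{Z}(\sigma),\mathcal{O}_{\Sigma_{n}}(f))=h^{1}(\mathcal{I}_{Z}(-\sigma-(n+3)f))=|Z|,
\end{align*}
using that $a=-1$ gives vanishing cohomology by Lemma \ref{Lemma 1.23}. This makes the extensions a projective bundle over the open set of $\text{Hilb}^{c_{2}-1}(\Sigma_{n})$ where $Z\not\subset\sigma$, and that bundle dominates the locus. I then have to check that the dimension count, accounting for the fibres of the modular map, equals $2c_{2}-n+1$. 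Because the count is delicate, I would verify it explicitly in the two listed situations.
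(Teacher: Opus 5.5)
Your skeleton matches the paper's. You use Remark \ref{Remark 4.5} to reduce to $D=f$, Proposition \ref{Proposition 4.3.2} for smoothness of $M_{H_r}$, Theorem \ref{Theorem 4.6} for injectivity of $\mu_E$, and the projective bundle over $\mathrm{Hilb}^{c_2-1}(\Sigma_n)$ for irreducibility. For $(3,2)$ your check of the hypotheses of Theorem \ref{Theorem 4.6} is exactly what the paper does. The steps you leave open are genuine gaps, however, and the paper's way of closing them does not work.

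The paper disposes of $D=0$ by simply assuming $D=f$. For $n=1$ it asserts $h^1(E^{*}\otimes\mathcal{O}_{\Sigma_n}(K_{\Sigma_n}))=h^1(E)=h^1(\mathcal{I}_Z(\sigma))=h^1(\mathcal{O}_{\Sigma_n}(K_{\Sigma_n}-\sigma))=n-1=0$, which would make $\mu_E$ trivially injective for all $c_2\ge 2$. That computation drops the contribution of $Z$. Proposition \ref{Proposition 2.4} gives $\chi(E)=5-n-c_2$, so $h^0(E)=2$ and $h^2(E)=0$ force $h^1(E)=c_2+n-3$. For $n=1$ this vanishes only at $c_2=2$. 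Note also that Theorem \ref{Theorem 4.6}, and your rerun of it, only control $E$ with $Z$ general, whereas smoothness is asserted at every point.

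Your reduction to $D\in\{0,f\}$ tacitly uses $\beta_r=\alpha(n+1)+r$, as in Theorem \ref{Theorem 3.4}(5). With $\beta_r=\alpha+r$ as printed, $H_r$ is not ample on $\Sigma_2$. On $\Sigma_1$ one gets $\mu_{H_r}(\mathcal{O}_{\Sigma_1}(f))=\alpha\ge\frac{\alpha+r}{2}=\mu_{H_r}(E)$, so no $D=f$ extension is stable.

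With that polarization, the dimension count you postponed shows your ``main obstacle'' cannot be overcome. The $D=f$ extensions are parametrized by $Z\in\mathrm{Hilb}^{c_2-1}$ together with $\mathbb{P}(\mathrm{Ext}^1(\mathcal{I}_Z(\sigma),\mathcal{O}_{\Sigma_n}(f)))\cong\mathbb{P}^{c_2-2}$. Distinct points give non-isomorphic bundles because $h^0(E(-f))=1$. So they fill a locus of dimension $3c_2-4$. This equals $\rho^2_{H_r}=2c_2-n+1$ if and only if $c_2=5-n$, which for $n\le 2$ is exactly the hypothesis $|Z|=h^0(\mathcal{O}_{\Sigma_n}(\sigma+f))$ of Theorem \ref{Theorem 4.6}.

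\begin{itemize}
\item For $n=1$ and $c_2\ge 5$, these bundles are stable with $h^0(E)=2$. Hence $W^2_{H_r}\setminus W^3_{H_r}$ has dimension at least $3c_2-4>\rho^2_{H_r}$, and by Corollary \ref{Corollary 4.1} $\mu_E$ is not injective there. No cup-product analysis can fix this.
\item For $c_2\in\{2,3\}$, the locus has dimension less than $\rho^2_{H_r}$, so by Corollary \ref{Corollary 4.5} it is not a component. The general member of any component through it has $D=0$. So $D=0$ cannot be ruled out, and the Hilbert-scheme argument does not reach it.
\item Even at $(4,1)$ and $(3,2)$, the $D=0$ extensions with $Z$ on a curve of $|\sigma+f|$ are stable with $h^0(E)=2$. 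They form a second family, disjoint from the $D=f$ one, of dimension $6+3-1=8$, resp.\ $4+2-1=5$, which is exactly $\rho^2_{H_r}$. Irreducibility therefore does not follow from the $D=f$ projective bundle.
\end{itemize}

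At best, your argument gives smoothness of the expected dimension at general $D=f$ points for $n=1$ with $c_2\le 4$, and for $(3,2)$. The remaining assertions need a corrected statement.
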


\begin{proof}
Analogously to Proposition \ref{Proposition 4.12}, we will assume that $E\in W_{H_{r}}^{2}(2;\sigma+f,c_{2})\setminus W_{H_{r}}^{3}(2;\sigma+f,c_{2})$ sits in a short exact sequence of type
\begin{align}
    0 \longrightarrow \mathcal{O}_{\Sigma_{n}}(D)\longrightarrow E\longrightarrow \mathcal{I}_{Z}(\sigma+f-D)\longrightarrow 0, \label{Example 4.4.14}
\end{align}
where $D=f$ and $Z$ is a locally complete intersection $0$-cycle of length $|Z|=c_{2}-1$, since we do not have stability results for $H_{r}$ when $D=0$ and $c_{2}\in\{2,3,4\}$.

By Theorem \ref{Theorem 4.6}, the map $\mu_{E}$ is injective if $c_{2}=3$ and $n=2$ because $h^{1}(\mathcal{O}_{\Sigma_{2}}(\sigma+f))=0$.

On the other hand, note that by Serre duality we have $H^{1}(E^{*}\otimes\mathcal{O}_{\Sigma_{n}}(K_{\Sigma_{n}}))=H^{1}(E)$ and since $H^{i}(\mathcal{O}_{\Sigma_{n}}(f))=0$ for $i\in\{1,2\}$, then  $h^{1}(E)=h^{1}(\mathcal{I}_{Z}(\sigma))=h^{1}(\mathcal{O}_{\Sigma_{n}}(K_{\Sigma_{n}}-\sigma))=n-1$, so if $n=1$, then $H^{1}(E^{*}\otimes\mathcal{O}_{\Sigma_{n}}(K_{\Sigma_{n}}))=0$, and therefore $\mu_{E}$ is trivially injective for every value $c_{2}\geq 2$.

The rest of the proof is analogous to Proposition \ref{Proposition 4.8}.
\end{proof}

\begin{proposition}
Let $c_{2}\geq 5$, $\Sigma_{n}$ be a Hirzebruch surface and $H_{r}\equiv \alpha\sigma+\beta_{r} f$ an ample divisor on $\Sigma_{n}$ where $\beta_{r}=\alpha(n+1)+r$ with $0<r\leq\alpha$. Then, $W_{H_{r}}^{1}(2;\sigma+f,c_{2})\setminus W_{H_{r}}^{2}(2;\sigma+f,c_{2})$ is smooth and irreducible of the expected dimension $\rho_{H_{r}}^{1}=3c_{2}-1$.
\end{proposition}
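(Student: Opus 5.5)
The plan follows the template of Proposition \ref{Proposition 4.8}. Take $E\in W_{H_{r}}^{1}(2;\sigma+f,c_{2})\setminus W_{H_{r}}^{2}(2;\sigma+f,c_{2})$, so $h^{0}(E)=1$. By Remark \ref{Remark 4.5}, $E$ sits in an extension
\begin{align*}
0\longrightarrow\mathcal{O}_{\Sigma_{n}}(D)\longrightarrow E\longrightarrow\mathcal{I}_{Z}(\sigma+f-D)\longrightarrow 0
\end{align*}
with $D=a\sigma+bf$ effective.

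The first step is to pin down $D$. Stability gives $D.H_{r}<\mu_{H_{r}}(E)=(2\alpha+r)/2$, where $D.H_{r}=a(\alpha+r)+b\alpha$. The bound excludes $a\geq 1$, since $\alpha+r>(2\alpha+r)/2$. It also excludes $b\geq 2$. With $a=0$ it leaves only $b\in\{0,1\}$. If $D=f$, then $h^{0}(E)\geq h^{0}(\mathcal{O}_{\Sigma_{n}}(f))=2$, contradicting $h^{0}(E)=1$. Hence $D=0$, and every such $E$ is an extension
\begin{align*}
0\longrightarrow\mathcal{O}_{\Sigma_{n}}\longrightarrow E\longrightarrow\mathcal{I}_{Z}(\sigma+f)\longrightarrow 0,\qquad |Z|=c_{2}.
\end{align*}
Conversely, Theorem \ref{Theorem 3.4}(6) shows these extensions (for general $Z$, $c_{2}\geq 5$) are $H_{r}$-stable with $h^{0}(E)=1$, so the locus is nonempty.

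Smoothness comes from three earlier results. Proposition \ref{Proposition 4.3.2} with $D_{c_{1}}=0$ gives $h^{2}(E\otimes E^{*})=0$, so $E$ is a smooth point of $M_{H_{r}}$. Proposition \ref{Proposition 4.3.3} applies because $h^{0}(E)=1$, so $\mu_{E}$ is injective. Corollary \ref{Corollary 4.1} then shows the locus is smooth of dimension $\rho^{1}_{H_{r}}$ at $E$. To evaluate $\rho^{1}_{H_{r}}$, use $(\sigma+f)^{2}=2-n$, $\dim M=4c_{2}-(2-n)-3$, and $\chi(E)=1+|{-1}|$ computed from Proposition \ref{Proposition 2.4}. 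The arithmetic should give $\chi=3-c_{2}+\ldots$; I will verify this simplifies to the stated $3c_{2}-1$ and adjust to whatever value the formula actually produces.

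Irreducibility mirrors Proposition \ref{Proposition 4.8}. One computes $\mathrm{ext}^{1}(\mathcal{I}_{Z}(\sigma+f),\mathcal{O})=h^{1}(\mathcal{I}_{Z}(\sigma+f+K))$. Since $\sigma+f+K=-\sigma-(n+1)f$, both $h^{0}$ and $h^{2}$ vanish, and Lemma \ref{Lemma 1.23} with $a=-1$ gives $\chi(\mathcal{O}(-\sigma-(n+1)f))=0$. So the Ext group has dimension $c_{2}$, constant over $Hilb^{c_{2}}(\Sigma_{n})$. Grauert's theorem then makes the extensions a $\mathbb{P}^{c_{2}-1}$-bundle over the irreducible Hilbert scheme, of dimension $3c_{2}-1$. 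Its open stable locus dominates the whole stratum, which forces irreducibility.

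The main obstacle is the first step: we must show that \emph{every} $E$ in the stratum has $D=0$, not just the generic one. The case $D=f$ is ruled out only through the hypothesis $h^{0}(E)=1$. A second issue is that stability of the extensions over non-general $Z$ is not known. The dimension and dominance argument must therefore use only the open subset $S'$ where stability holds, exactly as in Proposition \ref{Proposition 4.8}.
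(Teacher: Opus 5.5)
Your proposal is correct and follows the paper's route; the paper's proof is just ``analogous to Proposition \ref{Proposition 4.8}''. You force $D=0$ (stability excludes $a\geq 1$ and $b\geq 2$, and $h^{0}(E)=1$ excludes $D=f$). You then get smoothness from Propositions \ref{Proposition 4.3.2}, \ref{Proposition 4.3.3} and Corollary \ref{Corollary 4.1}, and irreducibility from the $\mathbb{P}^{c_{2}-1}$-bundle over $Hilb^{c_{2}}(\Sigma_{n})$, whose open locus of stable bundles with $h^{0}=1$ maps onto the stratum.

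The only thing you left unfinished is the value of $\rho^{1}_{H_{r}}$, and your fragment $\chi=3-c_{2}+\ldots$ is off. Since $c_{1}.K_{\Sigma_{n}}=n-4$ and $c_{1}^{2}=2-n$, one gets $\chi(E)=5-n-c_{2}$; equivalently $\chi(E)=1+\chi(\mathcal{O}_{\Sigma_{n}}(\sigma+f))-c_{2}$ with $\chi(\mathcal{O}_{\Sigma_{n}}(\sigma+f))=4-n$. Hence $\rho^{1}_{H_{r}}=(4c_{2}+n-5)-(1-\chi(E))=3c_{2}-1$, which agrees with $\dim S=2c_{2}+(c_{2}-1)$.
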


\begin{proof}
The proof is analogous to Proposition \ref{Proposition 4.8}.
\end{proof}

\vspace{1em}

{\bf Acknowledgements}

The author would like to thank the Centro de Ciencias Matemáticas at Universidad Nacional Autónoma de México (UNAM) for providing the facilities and support necessary for the completion of this work. The author also gratefully acknowledges the Secretaría de Ciencia, Humanidades, Tecnología e Innovación (SECIHTI) of Mexico for financial support received through a doctoral scholarship. Additionally, the author was partially supported by UNAM-PAPIIT grant IN101226, ``Teoría de Brill-Noether y aplicaciones''.



\begin{thebibliography}{99}

\bibitem{Beauville} \textsc{A. Beauville}, \textit{Complex Algebraic Surfaces}, London Mathematical Society, Student Texts. 34, Cambridge University Press, (1996).

\bibitem{Coskun} \textsc{I. Coskun}, \textit{Brill-Noether theorems and globally generated vector bundles on Hirzebruch surfaces}, Nagoya Math. J., 238 (2020), 1-36.

\bibitem{Costa3} \textsc{L. Costa and I. Macías-Tarrío}, \textit{Brill-Noether theory of stable vector bundles on ruled surfaces}, Mediterr. J. Math. (2024) 21:118.

\bibitem{Costa4} \textsc{L. Costa and R. M. Miró-Roig}, \textit{Brill-Noether theory for moduli spaces of sheaves on algebraic varieties}, Forum Math. 22 (2010), 411–432.

\bibitem{Costa2} \textsc{L. Costa and R. M. Miró-Roig}, \textit{Brill–Noether theory on Hirzebruch surfaces}, Journal of Pure and Applied Algebra, Vol. 214, Issue 9, (2010), 1612-1622.

\bibitem{Filimon} \textsc{L. Filimon}, \textit{On translated rank-2 Brill–Noether loci on regular surfaces}, Arch. Math. 118(3), 271–281 (2022).

\bibitem{Friedman} \textsc{R. Friedman}, \textit{Algebraic Surfaces and Holomorphic Vector Bundles}, Springer Verlag New York, Inc. (1998).

\bibitem{Hartshorne} \textsc{R. Hartshorne}, \textit{Algebraic Geometry}, G.T.M., vol. 52, Springer, Berlin (1977).

\bibitem{Huybrechts} \textsc{D. Huybrechts, M. Lehn}, \textit{The Geometry of Moduli Spaces of Sheaves}, Cambridge University Press, (2010).

\bibitem{Maruyama} \textsc{M.Maruyama}, \textit{Openness of a family of torsion free sheaves}, J. Math. Kyoto Univ. 16, (1976), 627-637.

\bibitem{Roa} \textsc{L. Roa-Lequizam\'on, H. Torres-López, A. G. Zamora}, \textit{On the Segre invariant for rank two vector bundles on $\mathbb{P}^{2}$}. Adv. Geom. 2021; 21(4):565-576.

\bibitem{Timofeeva} \textsc{N.V. Timofeeva}, \textit{Determinantal Resolution of the Universal Subscheme in $S\times H_{d+1}$}. Mathematical Notes, vol. 69, no. 2, 2001, pp. 253-261.

\end{thebibliography}
\end{document}